\documentclass[12t,a4paper]{amsart}
\usepackage{mathtools,amssymb,amsmath,amsopn,amsthm,graphicx,MnSymbol,centernot,stmaryrd,array}
\usepackage{tikz-cd}
\usepackage{enumitem}
\usepackage{amsaddr}
\usepackage{etoolbox}
\usepackage[all,curve,frame]{xy}
\patchcmd{\subsection}{-.5em}{.5em}{}{}
\usetikzlibrary{matrix}
\addtolength\textwidth{1 in}
\addtolength\hoffset{-.5 in}

\makeatletter
\newcommand*{\doublerightarrow}[2]{\mathrel{
  \settowidth{\@tempdima}{$\scriptstyle#1$}
  \settowidth{\@tempdimb}{$\scriptstyle#2$}
  \ifdim\@tempdimb>\@tempdima \@tempdima=\@tempdimb\fi
  \mathop{\vcenter{
    \offinterlineskip\ialign{\hbox to\dimexpr\@tempdima+1em{##}\cr
    \rightarrowfill\cr\noalign{\kern.5ex}
    \rightarrowfill\cr}}}\limits^{\!#1}_{\!#2}}}
\newcommand*{\triplerightarrow}[1]{\mathrel{
  \settowidth{\@tempdima}{$\scriptstyle#1$}
  \mathop{\vcenter{
    \offinterlineskip\ialign{\hbox to\dimexpr\@tempdima+1em{##}\cr
    \rightarrowfill\cr\noalign{\kern.5ex}
    \rightarrowfill\cr\noalign{\kern.5ex}
    \rightarrowfill\cr}}}\limits^{\!#1}}}
\newcommand{\nd}[1]{\begin{smallmatrix}#1\end{smallmatrix}}
\makeatother
\newcommand{\uwithtext}[1]{\DOTSB\mathbin{\text{\tikz{%
    \node (a) {#1};
    \draw[black, rounded corners=1.2ex] 
        ([yshift=-2pt]a.north west) -- (a.south west) -- (a.south east) -- ([yshift=-2pt]a.north east);}}}}

\begin{document}

\newtheorem{definition}{Definition}[section]
\newtheorem{definitions}[definition]{Definitions}
\newtheorem{deflem}[definition]{Definition and Lemma}
\newtheorem{lemma}[definition]{Lemma}
\newtheorem{proposition}[definition]{Proposition}
\newtheorem{theorem}[definition]{Theorem}
\newtheorem{corollary}[definition]{Corollary}
\newtheorem{algo}[definition]{Algorithm}
\theoremstyle{remark}
\newtheorem{rmk}[definition]{Remark}
\theoremstyle{remark}
\newtheorem{remarks}[definition]{Remarks}
\theoremstyle{remark}
\newtheorem{notation}[definition]{Notation}
\newtheorem{assumption}[definition]{Assumption}
\theoremstyle{remark}
\newtheorem{example}[definition]{Example}
\theoremstyle{remark}
\newtheorem{examples}[definition]{Examples}
\theoremstyle{remark}
\newtheorem{dgram}[definition]{Diagram}
\theoremstyle{remark}
\newtheorem{fact}[definition]{Fact}
\theoremstyle{remark}
\newtheorem{illust}[definition]{Illustration}
\theoremstyle{remark}
\newtheorem{que}[definition]{Question}
\theoremstyle{definition}
\newtheorem{conj}[definition]{Conjecture}
\newtheorem{scho}[definition]{Scholium}
\newtheorem{por}[definition]{Porism}
\DeclarePairedDelimiter\floor{\lfloor}{\rfloor}

\renewenvironment{proof}{\noindent {\bf{Proof.}}}{\hspace*{3mm}{$\Box$}{\vspace{9pt}}}
\author[Sardar]{Shantanu Sardar}
\address{Department of Mathematics \\Universidad Nacional de Mar del Plata, Mar del Plata\\ Buenos Aires, Argentina}
%\affil{}
\email{shantanusardar17@gmail.com}
\title{{Krull-Gabriel dimension of Skew group algebras}}
\keywords{}
\subjclass[2020]{}

\begin{abstract}
For an algebraically closed field $K$, let $G$ be a finite abelian group of $K$-linear automorphisms of a finite-dimensional algebra $\Lambda$ and $\bar\Lambda(=\Lambda G)$ is the associated skew group algebra. The author with S. Trepode and A. G. Chaio introduced the notion of a Galois semi-covering functor to study the irreducible morphisms over skew group algebras. In this paper, we establish a Galois semi-covering functor between the morphism categories as well as the functor categories over the algebras $\Lambda$ and $\bar\Lambda$ and prove that their Krull-Gabriel dimension are equal. This computation confirms Prest's conjecture on the finiteness of Krull-Gabriel dimension and Schr\"oer's conjecture on its connection with the stable rank (the least stabilized radical power) over skew gentle algebras. Moreover, we determine all posible stable ranks for (skew) Brauer graph algebras.  
\end{abstract}

\maketitle
\section{Introduction}
The Krull–Gabriel dimension, $\mathrm{KG}(\Lambda)$, of a ring was introduced as a modification of the Gabriel dimension for the category of finitely presented modules by Geigle in his thesis \cite{Ge85}. This dimension also turns out \cite[Lemma~B.9]{Krau01} to be the $m$-dimension of the lattice of pp formulas for $\Lambda$, equivalently of the lattice of pointed finite-dimensional modules; that dimension is a variant of the elementary Krull dimension, introduced by Garavaglia \cite{Ga80}. The Krull-Gabriel dimension is defined for any locally bounded K-category. In fact, it is defined for any skeletally small abelian category.

\begin{notation}
Assume that $Q=(Q_0,Q_1,s,t)$ is a finite quiver, where $Q_0$ is the set of vertices, $Q_1$ is the set of arrows and $s,t: Q_1\to Q_0$ denote the source and target functions. Fix $\tilde I$ to be a set of representatives of $Q_0$ under the action of G. Set $G_{i_0}$ as the stabilizer of $i_0\in \tilde I$ in $G$ and $G_{i_0j_0}:=G_{i_0}\cap G_{j_0}$. The $G$-orbit of a path $p$ is denoted by $O_p$. We denote by $\mathrm{mod}\mbox{-} K$ the category of all finite-dimensional $K$-vector spaces and by $\mathrm{mod}\mbox{-}\Lambda$ the category of all finitely generated right $\Lambda$-modules. We write $M\inplus N$ for $M, N\in \mathrm{mod}\mbox{-}\Lambda$ if $M$ is a direct summand of $N$.  Write $\operatorname{rad}^\alpha_A$ as the transfinite powers of the radical $\operatorname{rad}_A$ of $\mathrm{mod}\mbox{-}\Lambda$. A subfunctor $G$ of a functor $F$ is denoted by $G\subseteq F$ and the corresponding quotient functor is denoted by $F/G$.
\end{notation}

We start with some well-known results about the KG dimension. M. Auslander proves in \cite[Corollary~3.14]{Au82} that $\mathrm{KG}(\Lambda)= 0$ if and only if the algebra $\Lambda$ is of finite representation type. H. Krause shows in \cite[11.4]{Kr98} that $\mathrm{KG}(\Lambda)\neq 1$ for any algebra $\Lambda$. W. Geigle proves in \cite[4.3]{Ge86} that if $\Lambda$ is a tame hereditary algebra, then $\mathrm{KG}(\Lambda)=2$. A.Skowronski shows in \cite[Theorem~1.2]{Sk16} that if $\Lambda$ is a cycle-finite algebra of domestic representation type, then $\mathrm{KG}(\Lambda)= 2$. M. Wenderlich proves in \cite{We96} that if $\Lambda$ is a strongly simply connected algebra, then $\Lambda$ is of domestic type if and only if $\mathrm{KG}(\Lambda)$ is finite; R. Laking, M. Prest and G. Puninski prove in \cite{LaPrPu18} a renowned result that string algebras of domestic representation type have finite KG dimension. H. Krause proves in \cite[Corollary~8.14]{Krau01} that if $\mathrm{KG}(\Lambda)= n \in \mathbf{N}$, then $\mathrm{rad}^{\omega(n+1)}_\Lambda=0$ i.e. their stable rank is at most $\omega(n+1)$ where $\omega$ is the first infinite cardinal. Here is a list of algebras where the width of the lattice of all pointed modules is infinite and as a result, their KG dimension is undefined. This list includes (strictly) wild algebras, nondomestic string algebras, tubular algebras, pg-critical algebras, strongly simply connected algebras of nonpolynomial growth and weighted surface algebras.

Classifying a finite-dimensional associative algebra as finite, tame (domestic and non-domestic), or wild is a challenging problem. The computation of some numerical invariants like KG dimension and (stable) rank could make this task easier as we see in the last paragraph. Here, we are concerned about the KG dimension of skew group algebras. These kind of algebras were first studied from the point of view of representation theory in \cite{ReRi85}. For a finite-dimensional algebra $\Lambda$ over a field $K$ and a finite group $G$ acting on $\Lambda$ by automorphisms, the skew group algebra $\bar\Lambda$ shares many representation-theoretic properties with $\Lambda$ often incarnated in properties of functors between $\mathrm{mod}\mbox{-}\Lambda$ and $\mathrm{mod}\mbox{-}\bar\Lambda$.  

%We get $A$ is of domestic type if and only if $KG_{st}(A)$ is finite.?

\begin{definition}[Skew group algebras]
Let $G$ be a finite group acting on an algebra $\Lambda$ by automorphisms. The skew group algebra $\bar\Lambda$ is the algebra defined by:
\begin{itemize}
    \item its underlying vector space is $\Lambda \otimes_K KG$;
    \item multiplication is given by $(\lambda \otimes g)(\mu \otimes h) = \bar\Lambda(\mu) \otimes gh$ for $\lambda,\mu\in\Lambda$ and $g,h\in G$, extended by linearity and distributivity.
\end{itemize}
\end{definition}
Notice that the algebra $\bar\Lambda$ is not basic in general.

The author with S. Trepode and A. G. Chaio \cite{GoSaTr25} showed the existence of certain kind of functor known as a Galois semi-covering (Section \ref{SGCM}) between the module categories of $\Lambda$ and $\bar{\Lambda}$ to study the irreducible morphisms over skew group algebras. In this paper, we consider the morphism category $\mathrm{H}(\mathrm{mod}\mbox{-}\Lambda)$ where the objects are the homomorphisms over the module category and morphisms are given by a pair of homomorphisms satisfying a commutative diagram (Section \ref{MGC}). Applying a functor component-wise we extend the functor $F_\lambda: \mathrm{mod}\mbox{-}\Lambda\to \mathrm{mod}\mbox{-}\bar{\Lambda}$ to the functor $HF_\lambda: \mathrm{H}(\mathrm{mod}\mbox{-}\Lambda) \to \mathrm{H}(\mathrm{mod}\mbox{-}\bar{\Lambda})$. We demonstrate that this induced functor is indeed a Galois semi-covering functor. 

\begin{theorem}
Assume that a group $G$ acts on an algebra $\Lambda$ and $\bar\Lambda$ is the associated skew group algebra. Then for any $f, h\in \mathrm{H}(\mathrm{mod}\mbox{-}\Lambda)$, the functor $\mathrm{H}F_\lambda: \mathrm{H}(\mathrm{mod}\mbox{-}\Lambda) \to \mathrm{H}(\mathrm{mod}\mbox{-}\bar\Lambda)$ induces the following isomorphisms of vector spaces:
$$\mathrm{H}(\mathrm{mod}\mbox{-}\bar\Lambda)(\mathrm{H}F_\lambda f,\mathrm{H}F_\lambda h)\approx \begin{cases}\bigoplus_{g\in G} \mathrm{H}(\mathrm{mod}\mbox{-}\Lambda)({}^gf, h)&\mbox{ if } G_{f}\neq G;\\\bigoplus_{g\in G} \mathrm{H}(\mathrm{mod}\mbox{-}\Lambda)(f, {}^gh)&\mbox{ if }G_{h}\neq G;\\\mathrm{H}(\mathrm{mod}\mbox{-}\Lambda)^{|G|}(f, h)&\mbox{ if }G_{fh}= G.\end{cases}$$  
\end{theorem}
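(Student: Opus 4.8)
\noindent The plan is to deduce the statement from its module-category analogue --- precisely the Galois semi-covering property of the functor $F_\lambda$ established in \cite{GoSaTr25} --- by checking that the isomorphisms there are natural in each argument, and then observing that a commutative square in $\mathrm{mod}\mbox{-}\bar\Lambda$ decomposes, under those isomorphisms, into a $G$-indexed family of commutative squares in $\mathrm{mod}\mbox{-}\Lambda$. Throughout, write $f\colon M_1\to M_0$ and $h\colon N_1\to N_0$ for the two objects, so that $\mathrm{H}F_\lambda f$ is the $\bar\Lambda$-homomorphism $F_\lambda(f)\colon F_\lambda M_1\to F_\lambda M_0$ and an element of $\mathrm{H}(\mathrm{mod}\mbox{-}\bar\Lambda)(\mathrm{H}F_\lambda f,\mathrm{H}F_\lambda h)$ is a pair $(\psi_1,\psi_0)$ with $\psi_i\in\mathrm{Hom}_{\bar\Lambda}(F_\lambda M_i,F_\lambda N_i)$ satisfying $F_\lambda(h)\,\psi_1=\psi_0\,F_\lambda(f)$.

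The first task is to upgrade the module-level isomorphisms to natural ones. When $G_f\neq G$ the relevant isomorphism is $\Phi_{M,N}\colon\mathrm{Hom}_{\bar\Lambda}(F_\lambda M,F_\lambda N)\xrightarrow{\sim}\bigoplus_{g\in G}\mathrm{Hom}_\Lambda({}^gM,N)$, and I would show that for $u\colon M'\to M$ and $v\colon N\to N'$ in $\mathrm{mod}\mbox{-}\Lambda$ it intertwines $\psi\mapsto F_\lambda(v)\,\psi\,F_\lambda(u)$ with $(\phi^g)_{g}\mapsto(v\,\phi^g\,{}^gu)_{g}$; dually, when $G_h\neq G$, the isomorphism $\mathrm{Hom}_{\bar\Lambda}(F_\lambda M,F_\lambda N)\cong\bigoplus_{g\in G}\mathrm{Hom}_\Lambda(M,{}^gN)$ intertwines the same operation with $(\phi^g)_g\mapsto({}^gv\,\phi^g\,u)_g$. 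I would obtain this either by quoting the construction in \cite{GoSaTr25} or, if only the pointwise isomorphism is recorded there, by unwinding it: $\Phi_{M,N}$ is assembled from the adjunction relating $F_\lambda$ to restriction along $\Lambda\hookrightarrow\bar\Lambda$ together with the decomposition of $F_\lambda M$ along the $G$-action, and both ingredients are functorial, so compatibility with pre- and post-composition and with the twist ${}^g(-)$ is forced. I expect this to be the main obstacle: one must track how the $G$-action and the cocycle defining $\bar\Lambda$ interact with composition on both sides at once, and the computation has to be performed in whichever of the two forms of the isomorphism is valid in the case at hand.

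Granting naturality, the rest is formal. Fix $(\psi_1,\psi_0)$ as above and put $(\phi_i^g)_g:=\Phi_{M_i,N_i}(\psi_i)$ for $i=0,1$. Both $F_\lambda(h)\,\psi_1$ and $\psi_0\,F_\lambda(f)$ lie in $\mathrm{Hom}_{\bar\Lambda}(F_\lambda M_1,F_\lambda N_0)$, and applying $\Phi_{M_1,N_0}$ --- naturality in the second variable for $h$ on the one hand, in the first variable for $f$ on the other --- yields
\[
\Phi_{M_1,N_0}\bigl(F_\lambda(h)\,\psi_1\bigr)=\bigl(h\,\phi_1^g\bigr)_g,\qquad
\Phi_{M_1,N_0}\bigl(\psi_0\,F_\lambda(f)\bigr)=\bigl(\phi_0^g\,{}^gf\bigr)_g .
\]
As $\Phi_{M_1,N_0}$ is injective, the commutativity $F_\lambda(h)\,\psi_1=\psi_0\,F_\lambda(f)$ is equivalent to $h\,\phi_1^g=\phi_0^g\,{}^gf$ for every $g\in G$, i.e. to each $(\phi_1^g,\phi_0^g)$ being a morphism ${}^gf\to h$ in $\mathrm{H}(\mathrm{mod}\mbox{-}\Lambda)$. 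Thus $(\psi_1,\psi_0)\mapsto\bigl((\phi_1^g,\phi_0^g)\bigr)_{g\in G}$ is a $K$-linear isomorphism $\mathrm{H}(\mathrm{mod}\mbox{-}\bar\Lambda)(\mathrm{H}F_\lambda f,\mathrm{H}F_\lambda h)\cong\bigoplus_{g\in G}\mathrm{H}(\mathrm{mod}\mbox{-}\Lambda)({}^gf,h)$, which is the first case; the second is obtained verbatim from the dual form of the isomorphism, under which the square relation becomes ${}^gh\,\phi_1^g=\phi_0^g\,f$, i.e. $(\phi_1^g,\phi_0^g)\colon f\to{}^gh$.

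Finally, when $G_{fh}=G$ both $f$ and $h$ are $G$-invariant and neither of the first two cases applies directly; instead one runs the identical argument on the third module-level isomorphism $\mathrm{Hom}_{\bar\Lambda}(F_\lambda M,F_\lambda N)\cong\mathrm{Hom}_\Lambda(M,N)^{|G|}$ of \cite{GoSaTr25}, under which the commutativity of $(\psi_1,\psi_0)$ reduces to the relation $h\,\phi_1=\phi_0\,f$ in each of the $|G|$ coordinates separately, giving $\mathrm{H}(\mathrm{mod}\mbox{-}\bar\Lambda)(\mathrm{H}F_\lambda f,\mathrm{H}F_\lambda h)\cong\mathrm{H}(\mathrm{mod}\mbox{-}\Lambda)(f,h)^{|G|}$. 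In all three cases $K$-linearity of the constructed maps is immediate, since $\Phi$ and the component-wise twist are $K$-linear, and functoriality in $f$ and $h$ --- needed to conclude that $\mathrm{H}F_\lambda$ is genuinely a Galois semi-covering functor --- follows from the same naturality squares by a routine diagram chase.
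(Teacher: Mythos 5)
Your argument is correct in substance, but it is not the paper's proof; it takes a genuinely different, more formal route. You reduce everything to the module-level semi-covering isomorphisms of Theorem \ref{GCM}, upgrade them to isomorphisms natural in both arguments (exactly what the adjunction of Proposition \ref{gcad} together with the canonical isomorphism $F^\lambda\circ F_\lambda\approx\bigoplus_{g\in G}{}^g(-)$ used in Proposition \ref{GSTAB} provides), and then let naturality convert the single relation $F_\lambda(h)\,\psi_1=\psi_0\,F_\lambda(f)$ into the $G$-indexed family $h\,\phi_1^g=\phi_0^g\,{}^gf$. The paper instead argues inside the morphism category: it uses Proposition \ref{3.7} to decompose the objects $\mathrm{H}F_\lambda f$ and $\mathrm{H}F_\lambda h$ into glued or direct-sum pieces ${}^{\hat g}\bar f$, ${}^{\hat g}\bar h$, writes down the maps $\mathrm{H}\nu_{f,h}$ and $\mathrm{H}\mu_{f,h}$ explicitly in Cases I and II, and settles Cases III and IV by dimension comparisons. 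Your route is more uniform --- once naturality is in hand the case distinctions essentially collapse, since $\bigoplus_{g\in G}\mathrm{H}(\mathrm{mod}\mbox{-}\Lambda)({}^gf,h)$, $\bigoplus_{g\in G}\mathrm{H}(\mathrm{mod}\mbox{-}\Lambda)(f,{}^gh)$ and $\mathrm{H}(\mathrm{mod}\mbox{-}\Lambda)^{|G|}(f,h)$ are isomorphic whenever the respective hypotheses hold --- and it avoids dimension counting; the paper's construction buys explicit formulas for the isomorphisms in terms of the glued morphisms, which is what its examples and the later functor-category argument exploit. Two caveats on your side: the case-split statement of Theorem \ref{GCM} does not by itself hand you one and the same natural transformation at all three corners $(M_1,N_1)$, $(M_0,N_0)$, $(M_1,N_0)$ (note that $G_f\neq G$ is compatible with all four modules being $G$-stable), so you genuinely need the adjunction-based construction you sketch rather than the quoted pointwise isomorphisms; and in the case $G_{fh}=G$ it is cleaner to deduce $\mathrm{H}(\mathrm{mod}\mbox{-}\Lambda)^{|G|}(f,h)$ from your second case via ${}^gh\approx h$ in $\mathrm{H}(\mathrm{mod}\mbox{-}\Lambda)$ than to assume a coordinatewise-natural form of the third module-level isomorphism. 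With those points made precise, your proof is complete and arguably tighter than the one in the paper.
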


We also study the categories $\mathcal{F}(\Lambda)$ and $\mathcal{F}(\bar{\Lambda})$ of finitely presented functors over $\Lambda$ and $\bar{\Lambda}$ respectively and introduce an exact and faithful functor $\phi: \mathcal{F}(\Lambda) \to \mathcal{F}(\bar\Lambda)$ which is also found to be a Galois semi-covering functor. 
\begin{theorem}
Suppose a finite abelian group $G$ acts on a finite-dimensional algebra $\Lambda$ with $\bar\Lambda$ the associated skew group algebra. Then for any $T_1, T_2 \in \mathcal{F}(\Lambda)$, the functor $\phi: \mathcal{F}(\Lambda) \to \mathcal{F}(\bar\Lambda)$ induces the following isomorphisms of vector spaces:
$$\mathcal{F}(\bar\Lambda)(\phi(T_1), \phi(T_2))\approx \begin{cases}\bigoplus_{g\in G} \mathcal{F}(\Lambda)({}^gT_1, T_2)&\mbox{ if } G_{T_1}\neq G;\\\bigoplus_{g\in G} \mathcal{F}(\Lambda)(T_1, {}^gT_2)&\mbox{ if }G_{T_2}\neq G;\\\mathcal{F}(\Lambda)^{|G|}(T_1, T_2)&\mbox{ if }G_{T_1T_2}= G.\end{cases}$$ 
\end{theorem}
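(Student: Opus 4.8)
The plan is to reduce the computation to the level of the morphism category, where it is already available, by exploiting the standard description of $\mathcal{F}(\Lambda)$ through projective presentations. Recall that every $T\in\mathcal{F}(\Lambda)$ fits into an exact sequence $(-,X)\xrightarrow{(-,f)}(-,Y)\to T\to 0$ with $f\colon X\to Y$ a morphism in $\mathrm{mod}\mbox{-}\Lambda$, so $T\cong\operatorname{coker}(-,f)$, and that $f\mapsto\operatorname{coker}(-,f)$ defines a full dense functor $\mathrm{H}(\mathrm{mod}\mbox{-}\Lambda)\to\mathcal{F}(\Lambda)$; taking $f$ to give a \emph{minimal} presentation makes it unique up to isomorphism, so that $G_T=G_f$, and likewise $G_{T_1T_2}=G_{fh}$ for minimal presentations $T_1=\operatorname{coker}(-,f)$, $T_2=\operatorname{coker}(-,h)$. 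First I would record the compatibility of $\phi$ with these presentations: by construction $\phi$ lifts $F_\lambda$ through the Yoneda embeddings, i.e.\ $\phi\big((-,Y)\big)\cong(-,F_\lambda Y)$, and since $\phi$ is exact this forces $\phi(T)\cong\operatorname{coker}(-,\mathrm{H}F_\lambda f)$ for $T=\operatorname{coker}(-,f)$; in the same way the pushforward autoequivalence ${}^g(-)$ of $\mathcal{F}(\Lambda)$ satisfies ${}^g\!\operatorname{coker}(-,f)\cong\operatorname{coker}(-,{}^gf)$. In particular the case hypotheses of the statement are precisely those of the preceding theorem applied to $f$ and $h$.

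With $T_1=\operatorname{coker}(-,f)$, $f\colon X\to Y$, and $T_2=\operatorname{coker}(-,h)$, $h\colon A\to B$, I would then apply $\mathcal{F}(\bar\Lambda)(-,\phi T_2)$ to the presentation $(-,F_\lambda X)\xrightarrow{(-,F_\lambda f)}(-,F_\lambda Y)\to\phi T_1\to 0$ and use the Yoneda lemma to obtain
\[
\mathcal{F}(\bar\Lambda)(\phi T_1,\phi T_2)\;\cong\;\ker\!\big[(\phi T_2)(F_\lambda Y)\xrightarrow{(\phi T_2)(F_\lambda f)}(\phi T_2)(F_\lambda X)\big].
\]
Since $\phi T_2=\operatorname{coker}(-,F_\lambda h)$, for each $Z\in\mathrm{mod}\mbox{-}\Lambda$ one has $(\phi T_2)(F_\lambda Z)=\operatorname{coker}\!\big[\mathrm{mod}\mbox{-}\bar\Lambda(F_\lambda Z,F_\lambda A)\to\mathrm{mod}\mbox{-}\bar\Lambda(F_\lambda Z,F_\lambda B)\big]$; the natural isomorphism $\mathrm{mod}\mbox{-}\bar\Lambda(F_\lambda Z,F_\lambda W)\cong\bigoplus_{g\in G}\mathrm{mod}\mbox{-}\Lambda(Z,{}^gW)$ (restriction of $F_\lambda W$ to $\Lambda$, equivalently the module-level Galois semi-covering of \cite{GoSaTr25}) then commutes past $\operatorname{coker}$ to give $(\phi T_2)(F_\lambda Z)\cong\bigoplus_{g\in G}({}^gT_2)(Z)$, naturally in $Z$. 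Feeding this back into the kernel above, the map becomes the block map $\bigoplus_g({}^gT_2)(f)$, so the kernel splits as $\bigoplus_{g\in G}\ker\!\big[({}^gT_2)(Y)\to({}^gT_2)(X)\big]$, and a second application of Yoneda recognizes the $g$-th summand as $\mathcal{F}(\Lambda)(\operatorname{coker}(-,f),{}^gT_2)=\mathcal{F}(\Lambda)(T_1,{}^gT_2)$. This yields $\mathcal{F}(\bar\Lambda)(\phi T_1,\phi T_2)\cong\bigoplus_{g\in G}\mathcal{F}(\Lambda)(T_1,{}^gT_2)$; running the symmetric argument with the presentation of $\phi T_2$ (or reindexing by $g\mapsto g^{-1}$ using that ${}^g(-)$ is an autoequivalence, whence $\mathcal{F}(\Lambda)(T_1,{}^gT_2)\cong\mathcal{F}(\Lambda)({}^{g^{-1}}T_1,T_2)$) gives the form $\bigoplus_{g\in G}\mathcal{F}(\Lambda)({}^gT_1,T_2)$; and if $G_{T_1T_2}=G$, i.e.\ $G_{T_2}=G$, every ${}^gT_2\cong T_2$ and the sum collapses to $\mathcal{F}(\Lambda)(T_1,T_2)^{|G|}$. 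Matching each form with the stabilizer under which it is the defining decomposition of a Galois semi-covering functor produces the three displayed cases.

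The main obstacle I expect is not a single hard estimate but ensuring that everything in sight is genuinely natural and $G$-equivariant, so that the direct-sum decompositions survive being pushed through $\operatorname{coker}$ and $\ker$ and descend to $\mathcal{F}$ rather than merely to the morphism category. Concretely one must check: that $\mathrm{mod}\mbox{-}\bar\Lambda(F_\lambda Z,F_\lambda W)\cong\bigoplus_g\mathrm{mod}\mbox{-}\Lambda(Z,{}^gW)$ is natural in both variables and respects the $G$-grading; that this grading is compatible with the defining ideal of $\mathcal{F}$, so that the cokernel functors $\mathrm{H}(\mathrm{mod}\mbox{-}\Lambda)\to\mathcal{F}(\Lambda)$ and $\mathrm{H}(\mathrm{mod}\mbox{-}\bar\Lambda)\to\mathcal{F}(\bar\Lambda)$ intertwine the $G$-actions and carry $\mathrm{H}F_\lambda$ to $\phi$; and that the three formulas agree on their overlaps (which is exactly what the autoequivalence property of ${}^g(-)$ provides, and is where one sees that no extra hypothesis beyond the stated stabilizer conditions is needed). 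The identifications $G_T=G_f$ and $G_{T_1T_2}=G_{fh}$ for minimal presentations, together with the fact that the whole argument only consumes inputs already established --- the module-level result of \cite{GoSaTr25} and the preceding theorem on $\mathrm{H}F_\lambda$, which between them absorb the commutativity of $G$ --- reduce the remaining work to routine diagram bookkeeping that parallels the proof of the preceding theorem through the cokernel functors.
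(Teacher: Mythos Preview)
Your argument is correct, and it takes a somewhat different and more direct route than the paper's. The paper writes $T_i=\operatorname{Coker}\mathrm{Hom}_\Lambda(-,f_i)$ and, via the commutative square of Proposition~\ref{cokphi}, simply identifies $\mathcal{F}(\Lambda)({}^gT_1,T_2)$ with $\mathrm{H}(\mathrm{mod}\mbox{-}\Lambda)({}^gf_1,f_2)$ so that Theorem~\ref{HGCM} can be quoted verbatim. This is quick, but the cokernel functor $\theta$ is only full and dense, not faithful, so that identification is a priori a surjection rather than an isomorphism; the paper is tacitly assuming that the semi-covering isomorphism of Theorem~\ref{HGCM} is compatible with the ideal of null-homotopic maps and hence descends to $\mathcal{F}$. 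Your Yoneda computation sidesteps this entirely: by realizing $\mathcal{F}(\bar\Lambda)(\phi T_1,\phi T_2)$ as $\ker\big[(\phi T_2)(F_\lambda Y)\to(\phi T_2)(F_\lambda X)\big]$ and then using the natural isomorphism $(\phi T_2)\circ F_\lambda\cong\bigoplus_{g}{}^gT_2$ (which is nothing but $F^\lambda F_\lambda\cong\bigoplus_g{}^g(-)$ post-composed with $\hat T_2$), you work directly with functor values rather than with lifts of morphisms. This needs only the module-level semi-covering of Theorem~\ref{GCM}, not the morphism-category version, and it produces the single formula $\bigoplus_g\mathcal{F}(\Lambda)(T_1,{}^gT_2)$ from which all three displayed cases follow by reindexing via the autoequivalences ${}^g(-)$ or by collapsing $G$-stable summands. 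The paper's route makes the parallel with Theorem~\ref{HGCM} and Lemma~\ref{sfm} transparent; yours is more elementary and more careful about where the hom-space isomorphisms actually live.
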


In \cite{GoSaTr25}, we established that the stable rank is preserved under skew group algebra construction. The main contribution in this paper is that skewness also preserves the KG dimension.

\begin{theorem}
Suppose $\Lambda$ is a finite-dimensional algebra, $G$ is a finite abelian group of $K$-linear automorphisms of $\Lambda$ and $\bar\Lambda$ is the associated skew group algebra. Then, the KG dimension $\mathrm{KG}(\bar\Lambda)$ of $\bar\Lambda$ equals the KG dimension $\mathrm{KG}(\Lambda)$ of $\Lambda$.
\end{theorem}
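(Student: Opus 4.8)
The plan is to compute both sides as the Krull--Gabriel dimension of a functor category. Recall that $\mathrm{KG}(\Lambda)$ equals the Krull--Gabriel dimension of the skeletally small abelian category $\mathcal F(\Lambda)$, computed by the transfinite Serre filtration $0=\mathcal F(\Lambda)_{-1}\subseteq\mathcal F(\Lambda)_{0}\subseteq\cdots$, where $\mathcal F(\Lambda)_{0}$ is the Serre subcategory of finite-length objects, $\mathcal F(\Lambda)_{\alpha}$ is the preimage under the localization $\mathcal F(\Lambda)\to\mathcal F(\Lambda)/\mathcal F(\Lambda)_{\alpha-1}$ of its finite-length objects, and $\mathcal F(\Lambda)_{\lambda}=\bigcup_{\alpha<\lambda}\mathcal F(\Lambda)_{\alpha}$ at limits; then $\mathrm{KG}(\Lambda)$ is the least $\alpha$ with $\mathcal F(\Lambda)=\mathcal F(\Lambda)_{\alpha}$, and is undefined if there is none. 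I would work with the exact faithful Galois semi-covering functor $\phi\colon\mathcal F(\Lambda)\to\mathcal F(\bar\Lambda)$ introduced above, and pair it with the exact functor $\psi\colon\mathcal F(\bar\Lambda)\to\mathcal F(\Lambda)$ induced by the restriction functor $\mathrm{mod}\mbox{-}\bar\Lambda\to\mathrm{mod}\mbox{-}\Lambda$. The Hom-formula for $\phi$ established above, read through Yoneda, says precisely that $\psi\phi\cong\bigoplus_{g\in G}{}^{g}(-)$ as endofunctors of $\mathcal F(\Lambda)$ (the three cases of the formula being the instances $\bigoplus_{g}{}^{g}T_{1}$, $\bigoplus_{g}{}^{g}T_{2}$ and $T^{|G|}$ of this single identity); dually, using that $\Lambda\subseteq\bar\Lambda$ is a separable Frobenius extension when $|G|$ is invertible in $K$ — so $F_{\lambda}$ and $\mathrm{Res}$ are biadjoint and $F_{\lambda}\circ\mathrm{Res}$ is, on modules, a finite direct sum of autoequivalences (the $\widehat G$-twists) with the identity among them — one gets $\phi\psi\cong\bigoplus_{\chi\in\widehat G}{}^{\chi}(-)$ on $\mathcal F(\bar\Lambda)$. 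Thus each of $\psi\phi$, $\phi\psi$ is a finite direct sum of autoequivalences, with the identity among the summands.

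The heart of the argument is the abstract statement: if $\phi\colon\mathcal A\to\mathcal B$ and $\psi\colon\mathcal B\to\mathcal A$ are exact functors of skeletally small abelian categories such that $\psi\phi$ and $\phi\psi$ are finite direct sums of autoequivalences, then $\mathrm{KG}(\mathcal A)=\mathrm{KG}(\mathcal B)$. I would prove, by transfinite induction on $\alpha$, that $\phi(\mathcal A_{\alpha})\subseteq\mathcal B_{\alpha}$ and $\psi(\mathcal B_{\alpha})\subseteq\mathcal A_{\alpha}$. For $\alpha=0$: for a simple object $S$, $\psi\phi(S)$ is a finite direct sum of autoequivalence-images of $S$, hence of finite length; since $\psi\phi$ is exact and faithful it reflects finite length, so $\phi(S)$ has finite length, and exactness of $\phi$ upgrades this to $\phi(\mathcal A_{0})\subseteq\mathcal B_{0}$ (and symmetrically). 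For the successor step, every Serre subcategory is stable under every autoequivalence (being intrinsically characterized), so the autoequivalences — and with them the isomorphisms $\psi\phi\cong\bigoplus{}^{g}(-)$ and $\phi\psi\cong\bigoplus{}^{\chi}(-)$ — descend to the Krull--Gabriel quotients, and $\phi,\psi$ induce exact functors $\mathcal A/\mathcal A_{\alpha}\to\mathcal B/\mathcal B_{\alpha}$ and $\mathcal B/\mathcal B_{\alpha}\to\mathcal A/\mathcal A_{\alpha}$ again satisfying the hypotheses; the $\alpha=0$ case applied to this induced pair yields $\phi(\mathcal A_{\alpha+1})\subseteq\mathcal B_{\alpha+1}$ and its mirror. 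Limit stages are immediate from closure of Serre subcategories under directed unions. Finally, if $\mathrm{KG}(\mathcal B)=\beta$, then $\mathcal B=\mathcal B_{\beta}$, so for every $X\in\mathcal A$ the object $X$ is a direct summand of $\psi\phi(X)=\psi(\phi X)\in\psi(\mathcal B_{\beta})\subseteq\mathcal A_{\beta}$, whence $\mathcal A=\mathcal A_{\beta}$ and $\mathrm{KG}(\mathcal A)\le\mathrm{KG}(\mathcal B)$; the reverse inequality is symmetric, and the same reasoning shows that one side is undefined precisely when the other is. Taking $\mathcal A=\mathcal F(\Lambda)$ and $\mathcal B=\mathcal F(\bar\Lambda)$ gives $\mathrm{KG}(\bar\Lambda)=\mathrm{KG}(\mathcal F(\bar\Lambda))=\mathrm{KG}(\mathcal F(\Lambda))=\mathrm{KG}(\Lambda)$.

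I expect the main obstacle to be the successor step: one must confirm that the filtration layers $\mathcal F(\Lambda)_{\alpha}$ are $G$-stable and that $\phi$ respects the filtrations through every localization. The $G$-stability is routine once the $G$-action on $\mathcal F(\Lambda)$ is available, but the descent to the quotient categories is delicate because localization can destroy the faithfulness of $\phi$ — which is exactly why the induction must be carried with the pair $(\phi,\psi)$ rather than with $\phi$ alone: the composites $\psi\phi$ and $\phi\psi$, being finite direct sums of autoequivalences, survive every localization and keep reflecting finite length there. A second, more technical, point is the construction of $\psi$ and the verification of the composite isomorphisms, which rely on the separability of the extension $\Lambda\subseteq\bar\Lambda$ and hence on $|G|$ being invertible in $K$; this is a standing hypothesis, satisfied in all the intended applications (skew gentle algebras and (skew) Brauer graph algebras in characteristic not dividing $|G|$).
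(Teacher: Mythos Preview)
Your approach is correct, but the paper takes a much shorter route. Instead of running a transfinite induction on the Krull--Gabriel filtration, the paper simply invokes the general lemma (Krause, \emph{The Spectrum of a Module Category}, Appendix~B) that an exact faithful functor $F\colon\mathcal C\to\mathcal D$ between abelian categories forces $\mathrm{KG}(\mathcal C)\le\mathrm{KG}(\mathcal D)$. Since $\phi\colon\mathcal F(\Lambda)\to\mathcal F(\bar\Lambda)$ has already been shown to be exact and faithful, this yields $\mathrm{KG}(\Lambda)\le\mathrm{KG}(\bar\Lambda)$ immediately. For the reverse inequality the paper does not construct your companion functor $\psi$; it instead appeals to the Reiten--Riedtmann involution $\Lambda\simeq(\bar\Lambda)\hat G$ up to Morita equivalence and applies the \emph{same} inequality with $\bar\Lambda$ in the role of the base algebra and $\hat G$ acting, giving $\mathrm{KG}(\bar\Lambda)\le\mathrm{KG}((\bar\Lambda)\hat G)=\mathrm{KG}(\Lambda)$. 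The whole proof is two lines.

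What you do differently is essentially re-prove the Krause lemma from scratch in the presence of the two-sided structure $\psi\phi\cong\bigoplus_g{}^g(-)$, $\phi\psi\cong\bigoplus_\chi{}^\chi(-)$. This buys you a self-contained argument that makes visible exactly why the filtration layers match (and handles both inequalities at once without separately invoking the dual-group Morita equivalence), at the cost of the induction you carry out. The paper's route buys brevity: once the exact-faithful lemma is cited and the involution is on the table, nothing further is needed. Your concern about faithfulness of $\phi$ not surviving localization---and your remedy of carrying the pair $(\phi,\psi)$ through the quotients---is exactly the content hidden inside the proof of the cited lemma, so the two arguments are cousins rather than strangers.
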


As an application, we determine the KG dimension of skew-gentle algebras and skew-Brauer algebras. Skew-gentle algebras were introduced in \cite{GdlP99} to study a class of tame matrix problems, and appears as a skew group algebra of gentle algebras for the group of order two when the characteristic of the field is different from two. We also verify the following well known conjectures regarding the relation between the KG dimension and the nilpotent index of radical for skew-gentle algebras.
\begin{corollary}\label{conjsp} The following is a list of conjectures:
\begin{enumerate}
\item[J. Schroer] \cite{Sc00} $\mathrm{KG}(\Lambda) = n \geq 2$ if and only if $\mathrm{rad}^{\omega(n-1)}_\Lambda=0$ and $\mathrm{rad}^{\omega n}_\Lambda=0$;

\item[M. Prest] \cite{Pr88} \cite{Pr09} A finite-dimensional algebra $\Lambda$ is of domestic representation type if and only if the Krull-Gabriel dimension $\mathrm{KG}(\Lambda)$ of $\Lambda$ is finite.
\end{enumerate}    
\end{corollary}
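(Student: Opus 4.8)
The plan is to deduce the corollary from two invariance statements: the equality $\mathrm{KG}(\bar\Lambda)=\mathrm{KG}(\Lambda)$ proved above and the equality of stable ranks obtained in \cite{GoSaTr25}. First I would record the transfer principle in its general form: if a finite abelian group $G$ acts on $\Lambda$ and $\bar\Lambda=\Lambda G$, then Schr\"oer's equivalence holds for $\Lambda$ if and only if it holds for $\bar\Lambda$, and likewise for Prest's. For Schr\"oer this is formal, because $\mathrm{rad}^{\alpha}_{\bar\Lambda}=0$ exactly when $\mathrm{rad}^{\alpha}_{\Lambda}=0$ for every ordinal $\alpha$ and $\mathrm{KG}(\bar\Lambda)=\mathrm{KG}(\Lambda)$, so the two sides of the equivalence are transported simultaneously. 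For Prest one needs in addition that $\bar\Lambda$ is of domestic representation type precisely when $\Lambda$ is; this is a standard consequence of the Galois semi-covering $F_\lambda\colon\mathrm{mod}\mbox{-}\Lambda\to\mathrm{mod}\mbox{-}\bar\Lambda$ (cf. \cite{GoSaTr25}, \cite{ReRi85}), since indecomposables of the two algebras correspond up to the finite $G$-action and hence the number of one-parameter families agrees up to a factor at most $|G|$.

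Next I would reduce the skew-gentle case to the gentle case. Under the running assumption $\mathrm{char}\,K\neq 2$, a skew-gentle algebra $\bar\Lambda$ is (up to Morita equivalence) the skew group algebra $\Lambda\mathbb{Z}_2$ of a gentle algebra $\Lambda$ by the cyclic group of order two, by the very construction of \cite{GdlP99}; so, by the transfer principle, it is enough to verify both equivalences for gentle algebras. For $\mathrm{char}\,K=2$ one argues directly: skew-gentle algebras are still special biserial, so the string-algebra input invoked below applies to them verbatim.

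It then remains to check the two conjectures for a gentle algebra $\Lambda$. Being special biserial, $\Lambda$ is tame and is either of finite representation type, domestic of infinite type, or non-domestic. If $\Lambda$ has finite representation type, then $\mathrm{KG}(\Lambda)=0$ by \cite{Au82}, $\Lambda$ is domestic, and both equivalences hold trivially (the $n\ge 2$ clause of Schr\"oer's statement being void). If $\Lambda$ is non-domestic, then it is a non-domestic string algebra, so the lattice of pointed modules has infinite width and $\mathrm{KG}(\Lambda)$ is undefined; in particular it is not a finite integer, so Schr\"oer's clause is again void and Prest's equivalence holds (a non-domestic algebra has non-finite $\mathrm{KG}$). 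If $\Lambda$ is domestic of infinite representation type, then $\mathrm{KG}(\Lambda)$ is finite by \cite{LaPrPu18}, and Schr\"oer's own computation of the transfinite radical filtration of string algebras in \cite{Sc00} determines the least integer $m$ with $\mathrm{rad}^{\omega m}_\Lambda=0$; comparing the two over the combinatorial types of domestic gentle algebras gives $m=\mathrm{KG}(\Lambda)$ together with $\mathrm{rad}^{\omega(m-1)}_\Lambda\neq 0$, which is Schr\"oer's equivalence, while Prest's holds since $\Lambda$ is domestic with finite $\mathrm{KG}$. Combining the three cases proves the corollary for gentle algebras, and the first two paragraphs upgrade it to skew-gentle algebras.

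The part I expect to be most delicate is the domestic-of-infinite-type case: one must make sure the value of $\mathrm{KG}(\Lambda)$ read off from \cite{LaPrPu18} and the radical-power data of \cite{Sc00} line up uniformly across all domestic gentle algebras (in particular that $\mathrm{KG}$ stays bounded and that the filtration jump sits exactly one $\omega$-step below it), and one must handle the $\mathrm{char}\,K=2$ situation where the skew-group-algebra description of skew-gentle algebras is not available. By contrast, the transfer principle of the first paragraph is a purely formal consequence of the two preservation theorems and should require no extra work.
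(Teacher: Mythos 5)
Your overall strategy is the one the paper actually uses: the statement is verified for skew-gentle algebras by transferring the known results for gentle (string) algebras across the skew group algebra construction, using the preservation of Krull--Gabriel dimension (Theorem \ref{eqkg}) and of radical powers and stable rank (Theorem \ref{psr}), together with the remark that domesticity corresponds on both sides. Up to that point your ``transfer principle'' is exactly Corollaries \ref{domfi} and \ref{conjs}.

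However, two steps in your reduction to the gentle case do not hold up as written. First, in the domestic-infinite-type case you do not actually establish Schr\"oer's equivalence for gentle algebras: the assertion that ``comparing'' the finiteness result of \cite{LaPrPu18} with the radical filtration of \cite{Sc00} yields $m=\mathrm{KG}(\Lambda)$ with $\mathrm{rad}^{\omega(m-1)}_\Lambda\neq 0$ is precisely the hard content of \cite[Theorem~10.19]{LaPrPu18}, which the paper simply cites; your sketch gestures at it (and you flag it yourself as the delicate point) but provides no argument, so as a proof it is a gap --- the fix is just to invoke that theorem rather than attempt a fresh comparison over ``combinatorial types of domestic gentle algebras.'' Second, your treatment of $\mathrm{char}\,K=2$ is incorrect: skew-gentle algebras in characteristic two are clannish, not special biserial, so the string-algebra input of \cite{LaPrPu18} does not apply ``verbatim,'' and the skew group algebra description also fails there. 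The paper avoids this entirely by working only in the setting where a skew-gentle algebra arises as $\Lambda\mathbb{Z}_2$ for a gentle algebra $\Lambda$ (i.e.\ $\mathrm{char}\,K\neq 2$); you should either impose the same hypothesis or drop the characteristic-two claim, since as stated it is false.
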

On the other hand, Brauer graph algebras, a special subclass of special biserial algebras that consists of symmetric algebras \cite[Section~2.3]{Sc18}, originate in the modular representation theory of finite groups in the form of Brauer tree algebras \cite{Ja69}. The interpretation of Brauer graphs as ribbon graphs \cite{MaSc14} links Brauer graph algebras with surface cluster theory and adds a geometric perspective to the representation theory of Brauer graph algebras. Skew Brauer graph algebras are generalization of Brauer graph algebras \cite[Section~3.1]{So24}. It is shown in \cite[Proposition~3.9]{So24} that for a skew Brauer graph algebra $\bar{\Lambda}$ one can construct a Brauer graph algebra $\Lambda$ such that $\bar{\Lambda}$ appears as a skew group algebra of $\Lambda$ with a certain action of the group of order two. As a consequence, their KG dimensions are equal. Moreover, we demonstrate the following result regarding their stable ranks.

\begin{theorem}\label{stablesba}
For a skew Brauer graph algebra $\bar{\Lambda}$ and its associated Brauer graph algebra $\Lambda$, we have $\omega \leq \mathrm{st}(\Lambda)=\mathrm{st}(\bar{\Lambda}) < \omega^2$.    
\end{theorem}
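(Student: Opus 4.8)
The plan is to combine the known invariance of the stable rank under the skew group construction with a direct analysis of the transfinite radical of a Brauer graph algebra. By \cite[Proposition~3.9]{So24} a skew Brauer graph algebra $\bar\Lambda$ is the skew group algebra $\Lambda G$ of an ordinary Brauer graph algebra $\Lambda$ with $G$ of order two (and $2$ invertible in $K$, as everywhere in this paper), and in \cite{GoSaTr25} we proved $\mathrm{st}(\Lambda)=\mathrm{st}(\Lambda G)$; hence the equality $\mathrm{st}(\Lambda)=\mathrm{st}(\bar\Lambda)$ is immediate and it suffices to establish $\omega\le\mathrm{st}(\Lambda)<\omega^2$ for a Brauer graph algebra $\Lambda$ itself. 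Here $\Lambda$ is assumed representation-infinite (a Brauer graph algebra is representation-finite exactly when its Brauer graph is a tree, i.e.\ it is a Brauer tree algebra, and then the stable rank is a finite ordinal by the Harada--Sai lemma).

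For the lower bound I would invoke the classical fact that $\operatorname{rad}^\omega_\Lambda=\bigcap_{n<\omega}\operatorname{rad}^n_\Lambda$ is zero if and only if $\Lambda$ is of finite representation type. As $\Lambda$ is representation-infinite, $\operatorname{rad}^\omega_\Lambda\ne 0$, so $\operatorname{rad}^n_\Lambda\ne0$ for all $n<\omega$ and therefore $\mathrm{st}(\Lambda)\ge\omega$. One can also see this concretely: $\Lambda$ is a tame special biserial algebra with band modules, hence its stable Auslander--Reiten quiver has more than one component, and any nonzero homomorphism between indecomposables on distinct components (for example from a string module to a band module) lies in $\operatorname{rad}^\omega_\Lambda$.

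For the upper bound the first step is a reduction to a string algebra. Being special biserial, $\Lambda$ is the quotient of a string algebra $\Lambda'$ by the ideal generated by the socles of the indecomposable projective-injective $\Lambda$-modules; by the Butler--Ringel/Wald--Waschb\"usch comparison, $\mathrm{mod}\mbox{-}\Lambda$ and $\mathrm{mod}\mbox{-}\Lambda'$ have the same indecomposable objects apart from these finitely many projective-injectives, and homomorphisms through them only contribute to finite powers of the radical, so $\operatorname{rad}^{\omega\cdot k}_\Lambda=0\iff\operatorname{rad}^{\omega\cdot k}_{\Lambda'}=0$. Over the string algebra $\Lambda'$ one works with the basis of $\operatorname{Hom}$-spaces by graph maps and with the classification of the stable Auslander--Reiten components (tubes and components of type $\mathbb{Z}A_\infty^\infty$). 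The core claim I would aim to prove is that a nonzero composite $f_c\circ\cdots\circ f_1$ of homomorphisms each in $\operatorname{rad}^\omega_{\Lambda'}$ cannot have $c$ arbitrarily large: each such factor realises a ``winding'' of a string along a band, and tracking the effect on a fixed finite string (the source of the composite, or a controlled enlargement of it) shows that only boundedly many windings --- the bound depending only on $\Lambda'$, and thus on the Brauer graph --- can occur before the composite is forced to vanish. Granting this, $\operatorname{rad}^{\omega\cdot(c+1)}_{\Lambda'}=0$, whence $\mathrm{st}(\Lambda')<\omega^2$ and $\mathrm{st}(\Lambda)<\omega^2$. (When $\Lambda$ is domestic this can be shortcut: then $\Lambda'$ is a domestic string algebra, $\mathrm{KG}(\Lambda')$ is finite by \cite{LaPrPu18}, and Krause's bound \cite[Corollary~8.14]{Krau01} gives $\operatorname{rad}^{\omega(\mathrm{KG}(\Lambda')+1)}_{\Lambda'}=0<\omega^2$.)

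The main obstacle is exactly this last claim: producing a \emph{uniform} bound on the number of composable infinite-radical maps. The delicate point is that a non-domestic Brauer graph algebra has infinitely many bands --- indeed the families of band modules may grow more than polynomially --- so the bound cannot be a count of bands; one must instead isolate an invariant of the source string that strictly decreases (or takes only finitely many values) at every passage through $\operatorname{rad}^\omega_{\Lambda'}$. Establishing such an invariant, by a case analysis of which string reductions a graph map can induce and of how band-windings compose, is where essentially all the work of the proof lies.
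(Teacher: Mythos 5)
Your reduction of the equality $\mathrm{st}(\Lambda)=\mathrm{st}(\bar\Lambda)$ to the invariance of the stable rank under the skew group construction (via \cite[Proposition~3.9]{So24} and \cite{GoSaTr25}) is exactly what the paper does, and your lower-bound argument ($\operatorname{rad}^\omega_\Lambda\neq 0$ for a representation-infinite algebra) is fine. The genuine gap is the upper bound $\mathrm{st}(\Lambda)<\omega^2$ in the non-domestic case: your plan reduces to a string algebra and then rests on the ``core claim'' that a composite of maps each lying in $\operatorname{rad}^\omega$ admits a uniform bound on its length, and you explicitly leave that claim unproven, noting yourself that this is where essentially all the work lies. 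That claim is not something one can wave through --- the difficulty you identify (infinitely many bands in the non-domestic case, so no naive counting argument) is real, and without the decreasing invariant you postulate, the proposal does not establish $\mathrm{st}(\Lambda)<\omega^2$. The paper does not prove this bound either; it simply quotes the theorem of Srivastava, Sinha and Kuber \cite{SVA23} that for \emph{any} special biserial algebra with at least one band one has $\omega\le\mathrm{st}<\omega^2$, combines it with the fact that Brauer graph algebras are special biserial (Theorem \ref{BSB}), and then applies Theorem \ref{psr}. So the correct repair of your argument is to replace your unproven core claim by a citation of \cite{SVA23} (or to actually carry out an argument of the kind developed there), after which the rest of your proposal goes through.

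One further point in your favour: you correctly flag that the bound $\omega\le\mathrm{st}(\Lambda)$ requires the algebra to have a band (representation-infinite case), since a Brauer tree algebra has finite stable rank; the cited theorem in the paper indeed carries the hypothesis ``at least one band'', which the statement of the theorem suppresses, so your caveat is a legitimate observation rather than an error on your side.
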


We arrange the paper as follows. We start with the basic information about skew group algebras in Section $(2)$. The main highlight of this section is the existence of the Galois semi-covering functor $F_\lambda: \mathrm{mod}\mbox{-}\Lambda \to  \mathrm{mod}\mbox{-}\bar\Lambda$, which is a pushdown of a functor $F:\Lambda \to \bar\Lambda$. As $\Lambda$ also appears as a skew group algebra of $\bar{\Lambda}$ under certain action of the dual group (Theorem \ref{inv}), we obtain another Galois semi-covering functor $F^\lambda: \mathrm{mod}\mbox{-}\bar\Lambda \to  \mathrm{mod}\mbox{-}\Lambda$. In Section $(3)$, we first show that $F_\lambda$ and $F^\lambda$ are adjoint to each other (Proposition \ref{gcad}). We also establish the existence of the Galois semi-covering functor $\mathrm{H}F_\lambda: \mathrm{H}(\mathrm{mod}\mbox{-}\Lambda) \to \mathrm{H}(\mathrm{mod}\mbox{-}\bar\Lambda)$ (resp. $\mathrm{S}F_\lambda: \mathrm{S}(\mathrm{mod}\mbox{-}\Lambda) \to \mathrm{S}(\mathrm{mod}\mbox{-}\bar\Lambda)$) between the morphism (resp. monomorphism) categories of an algebra and its skew group algebras (Theorem \ref{HGCM}) with an illustrative Example \ref{GCMOR}; where the functors $\mathrm{H}F_\lambda$ and $\mathrm{H}F^\lambda$ (resp. $\mathrm{S}F_\lambda$ and $\mathrm{S}F^\lambda$) are $G$-stable on objects (Proposition \ref{GSTAB}) and adjoint to each other (Lemma \ref{HADJ}). In Section $(4)$, we consider the category $\mathcal{F}(\Lambda)$ (resp. $\mathcal{F}(\bar{\Lambda})$) of finitely presented functors over the module category of $\Lambda$ (resp. $\bar{\Lambda}$) and present an exact functor $\phi: \mathcal{F}(\Lambda) \to \mathcal{G}(\bar\Lambda)$ (Propositions \ref{exctphi} and \ref{cokphi}) which is later shown as a Galois semi-covering faithful functor (Theorem \ref{GCF} and Corollary \ref{faithful}). In Section $(5)$, we prove that the KG dimension remain same under skew group algebra construction (Theorem \ref{eqkg}) and as an application, the KG dimension of a gentle (rep. Brauer graph) algebra and its skew gentle (Brauer graph) algebra are equal (Corollaries \ref{eqkgg} and \ref{eqkgb}). Corollary \ref{domfi} confirms the Prest's conjecture that states the domesticity of skew gentle algebras are characterized by the finiteness of their KG dimension and Corollary \ref{conjs} confirms the Schr\"oer's conjecture for skew gentle algebras. Moreover, we determine the all possible stable ranks for (skew) Brauer graph algebras (Theorem \ref{srbg}).  

\subsection*{Acknowledgements}
The author had a stay at the CEMIM-Universidad Nacional de Mar del Plata, supported by a postdoctoral scholarship from CONICET.

\begin{section}{Galois semi-covering Functor over the module category}\label{SGCM}
In the PhD thesis \cite{Jose83}, J. A. de la Pe\~na observed that the concept of skewness is identical with the Galois covering in the case of a free group action. This motivated us to study skew group algebra construction when the group action is not necessarily free and introduce the notion of the Galois semi-covering functor. Let us first recall the Galois semi-covering $F_\lambda: \mathrm{mod}\mbox{-}\Lambda \to  \mathrm{mod}\mbox{-}\bar\Lambda$ from \cite{GoSaTr25} to define the functor $\phi$ which plays the fundamental role in preserving the KG dimension.

\begin{definition}
Let $A,B$ be linear categories with $G$ a group acting on $A$. A functor $F: A\to B$ is called a $G$-semi covering if for any $X,Y\in Ob(A)$, the following hold:
$$B(FX,FY)\approx \begin{cases}\bigoplus_{g\in G} A(gX,Y)&\mbox{if } G_{X}\neq G;\\\bigoplus_{g\in G} A(X,gY)&\mbox{if }G_{Y}\neq G;\\A^{|G|}(X,Y)&\mbox{ if }G_{XY}= G.\end{cases}$$    
\end{definition}

\begin{assumption}
Let us fix a finite abelian group $G$ of order $n$ acting on $KQ$ such that $G_{i_0}\neq G$ implies $|O_{i_0}|=n$ for each $i_0\in \tilde I$. Recall that the set of all irreducible representations of $G$, denoted $\hat{G}$, forms a group w.r.t. tensor product of representations with $|\hat{G}|=n$.
\end{assumption}

The vertices of $Q_G$, the underlying quiver of the skew group algebra, are given by
$Q_{G_0}=\{(i_0, \rho)\mid i_0 \in \tilde I, \rho \in \hat{G}_{i_0}\}.$ The idempotent of $(KQ)G$ corresponding to the vertex $(i_0, \rho)$ is $e_{i_0\rho} = i_0\otimes e_\rho \mbox{, where } e_\rho = \frac{1}{|G_{i_0}|} \sum_{g\in G_{i_0}} \rho(g)g$ is an idempotent of $KG_{i_0}$. Consider the idempotent of $KQ_G$
$\bar e = \sum_{i_0\in \tilde I} \bar {e_{i_0}} \mbox{ where } \bar {e_{i_0}} = \sum_{\rho \in \hat{G}_{i_0}}e_{i_0\rho}.$

We recall the semi-covering functor \cite[Proposition~2.12]{GoSaTr25} induced by $F: KQ \to (KQ)G$ given by setting for each $i \in Q_0$, $F(i)=\bar {e_{i_0}} \mbox{ where, } i_0\in O_i\cap \tilde I$ and its push-down functor, $F_\lambda: \mathrm{mod}\mbox{-}\Lambda \to \mathrm{mod}\mbox{-}\bar\Lambda$. 

\begin{definition}\label{dgsc}
Let $F: \Lambda \to \bar\Lambda$ be the Galois semi-covering functor. We define the pushdown functor $F_\lambda: \mathrm{mod}\mbox{-}\Lambda \to \mathrm{mod}\mbox{-}\bar\Lambda$ as follows:

Suppose $M \in \mathrm{mod}\mbox{-}\Lambda$, then $F_\lambda M= \bigoplus_{i_0\in \tilde I} F_\lambda M(\bar {e_{i_0}})$ where, for each $\bar {e_{i_0}}\in \bar\Lambda$, we set
$$F_\lambda M(\bar {e_{i_0}}):= \bigoplus_{F(x)=\bar {e_{i_0}}}M(x).$$

Assume that $\bar{\alpha}\in \Lambda_1G(\bar {e_{i_0}},\bar {e_{j_0}})$. Consider the following cases:
\begin{enumerate}
\item If $G_{i_0}\neq G$ then there is an arrow $\alpha_h: h{i_0}\to {j_0}$ for some $h\in G$ such that $\bar{\alpha}=F(\alpha_h)$. Then the homomorphism $F_\lambda M(\bar{\alpha}): F_\lambda M(e_{i_0}) \to F_\lambda M(\bar {e_{j_0}})$ is defined by homomorphism:%$$M(g\alpha_{g^{-1}h}):M(gi_0)\to M(hj_0)$$ for any $g,h\in G$.
$$(\mu_g) \mapsto (\sum_{g\in G} M(g\alpha_h)(\mu_g)).$$

\item If $G_{i_0}=G$ but $G_{j_0}\neq G$ then there is an arrow $\alpha_h: i_0\to hj_0$ for some $h\in G$ such that $\bar{\alpha}=F(\alpha_h)$. Then the homomorphism $F_\lambda M(\bar{\alpha}): F_\lambda M(\bar {e_{i_0}}) \to F_\lambda M(e_{j_0})$ is defined by:%$$M(g\alpha_{g^{-1}h}):M(gi_0)\to M(hj_0)$$ for any $g,h\in G$.
$$\mu \mapsto (M(g_1\alpha_h)(\mu),\hdots, M(g_n\alpha_h)(\mu)).$$

\item If $G_{i_0j_0}= G$ then there is an arrow $\alpha: i_0\to j_0$ such that $\bar{\alpha}=F(\alpha)$. The homomorphism $F_\lambda M(\bar{\alpha}): F_\lambda M(\bar {e_{i_0}}) \to F_\lambda M(\bar {e_{j_0}})$ is defined by homomorphism:%$$M(g\alpha_{g^{-1}h}):M(gi_0)\to M(hj_0)$$ for any $g,h\in G$.
$$\mu \mapsto M(\alpha)(\mu).$$
\end{enumerate}
\end{definition}

%The pull-up functor $F^\lambda: \mathrm{mod}\mbox{-}\bar\Lambda \to \mathrm{mod}\mbox{-}\Lambda$ is the adjoint functor of $F_\lambda$, which is defined similarly when we consider $\Lambda$ as a skew group algebra of $\bar\Lambda$ under the action of $\hat{G}$, the dual group of $G$.

%The pull-up functor $F_*: \mathrm{mod}\mbox{-}\bar\Lambda \to \mathrm{mod}\mbox{-}\Lambda$ is defined as follows:
 
%For $a\in \Lambda$, we set $$F_* M(a):= \bigoplus_{F^{-1}(x)=a}M(x)$$ and the rest of the definition follows similarly.

%It follows easily that $F_\lambda$ equals $F_\rho$ in the category of finite-dimensional $\Lambda$ modules. We emphasize that this property does not hold in general (but it holds in the case $F: \Lambda \to \bar\Lambda$ is a Galois semi-covering). 
\vspace{0.1in}
\textbf{Morphism in $\mathrm{Mod}\mbox{-}\bar\Lambda$:} 
Assume that $f:=(f_{i_0})_{i_0\in Q_0}: M\to N \in \mathrm{mod}\mbox{-}\Lambda$ is a module homomorphism where, $f_{i_0}: M(i_0) \to N(i_0)$. Then $F_\lambda(f):= (\hat{f}_{\bar{e_{i_0}}}): F_\lambda(M) \to F_\lambda(N)$ where, $\hat{f}_{\bar{e_{i_0}}}: F_\lambda(M)(\bar{e_{i_0}}) \to F_\lambda(N)(\bar{e_{i_0}})$ is defined by homomorphisms $f_{i}: M(i) \to N(i)$, for all $i\in \mathcal{O}(i_0)$. If $M\in \mathrm{mod}\mbox{-}\Lambda$, then $F_\lambda(M)\in \mathrm{mod}\mbox{-}\bar\Lambda$. Hence, the functor $F_\lambda$ restricts to a functor $\mathrm{mod}\mbox{-}\Lambda \to \mathrm{mod}\mbox{-}\bar\Lambda$, also denoted by $F_\lambda$. 

\vspace{0.1in}
\textbf{Group action over $\mathrm{Mod}\mbox{-}\Lambda$:} Given a $M\in \mathrm{mod}\mbox{-}\Lambda$ we denote by ${}^gM$ the module $M\circ g^{-1}$ and a module homomorphism $f: M \to N$ we denote by ${}^gf$ the $\Lambda$-module homomorphism ${}^gM \to {}^gN$ such that ${}^g{f_x}:= f_{g^{-1}x}$, for any $x \in Q_0$. This defines an action of $G$ on $\mathrm{Mod}\mbox{-}\Lambda$. Moreover, the map $f\to {}^gf$ defines isomorphism of vector spaces $\Lambda(M,N) \approx \Lambda({}^gM, {}^gN)$. 

The following theorem ensures the existence of a Galois semi-covering from $\mathrm{mod}\mbox{-}\Lambda$ to $\mathrm{mod}\mbox{-}\bar\Lambda$.

\begin{theorem}\label{GCM}\cite[Theorem~3.11]{GoSaTr25}
Assume that a group $G$ acts on an algebra $\Lambda$ and $F: \Lambda \to \bar\Lambda$ is a Galois semi-covering. Then for any $M, N\in \mathrm{mod}\mbox{-}\Lambda$, the functor $F_\lambda: \mathrm{mod}\mbox{-}\Lambda \to \mathrm{mod}\mbox{-}\bar\Lambda$ induces the following isomorphisms of vector spaces:
$$\mathrm{Hom}_{\bar\Lambda}(F_\lambda M,F_\lambda N)\approx \begin{cases}\bigoplus_{g\in G} \mathrm{Hom}_\Lambda ({}^gM,N)&\mbox{ if } G_{M}\neq G;\\\bigoplus_{g\in G} \mathrm{Hom}_\Lambda(M,{}^gN)&\mbox{ if }G_{N}\neq G;\\\mathrm{Hom}_\Lambda^{|G|}(M,N)&\mbox{ if }G_{MN}= G.\end{cases}$$  
\end{theorem}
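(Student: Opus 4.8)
The plan is to extract from the statement one unconditional isomorphism and then read off the three cases. First I would prove
$$\mathrm{Hom}_{\bar\Lambda}(F_\lambda M, F_\lambda N)\;\approx\;\bigoplus_{g\in G}\mathrm{Hom}_\Lambda({}^gM, N)\qquad\text{for all }M,N\in\mathrm{mod}\mbox{-}\Lambda,$$
and then observe that twisting by an element $h$ gives $\mathrm{Hom}_\Lambda({}^gM,N)\approx\mathrm{Hom}_\Lambda(M,{}^{g^{-1}}N)$ (the isomorphism $\Lambda(A,B)\approx\Lambda({}^hA,{}^hB)$ recalled above), so re-indexing the direct sum produces the second displayed form; moreover if $G_M=G$ then ${}^gM\approx M$ for every $g$ and the sum collapses to $\mathrm{Hom}_\Lambda(M,N)^{|G|}$, and symmetrically when $G_N=G$. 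Hence the hypotheses $G_M\neq G$, $G_N\neq G$, $G_{MN}=G$ merely select in which of the three equivalent shapes the answer is displayed, and all the content sits in the formula above.

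For that formula I would run the standard projective-presentation-and-five-lemma argument of covering theory. Note first that $F_\lambda$ is exact: by Definition~\ref{dgsc} its value at each $\bar e_{i_0}$ is the finite direct sum $\bigoplus_{F(x)=\bar e_{i_0}}(-)(x)$ of evaluation functors, and evaluation is exact; likewise ${}^g(-)$ is exact, being a relabelling of vertices. Consequently $M\mapsto\mathrm{Hom}_{\bar\Lambda}(F_\lambda M,F_\lambda N)$ and $M\mapsto\bigoplus_{g}\mathrm{Hom}_\Lambda({}^gM,N)$ are left-exact contravariant functors $\mathrm{mod}\mbox{-}\Lambda\to\mathrm{mod}\mbox{-}K$, and a natural transformation between such functors that is an isomorphism on projective modules is an isomorphism everywhere: apply both to a presentation $P_1\to P_0\to M\to0$, which $F_\lambda$ carries to $F_\lambda P_1\to F_\lambda P_0\to F_\lambda M\to0$ and which ${}^g(-)$ carries to a presentation of ${}^gM$, and compare kernels. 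A natural comparison map is provided by functoriality of $F_\lambda$ together with $F_\lambda\circ{}^g\cong F_\lambda$ on objects (one copy of $\mathrm{Hom}_\Lambda(-,N)$ per $g$). So the problem reduces to $M=P_i$ an indecomposable projective.

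The base case is the real work. If $i$ lies in a free $G$-orbit then $\bar e_{i_0}$ is primitive, $F_\lambda P_i$ is the indecomposable projective $\bar e_{i_0}\bar\Lambda$, and the Yoneda lemma gives $\mathrm{Hom}_{\bar\Lambda}(F_\lambda P_i,F_\lambda N)\approx(F_\lambda N)(\bar e_{i_0})=\bigoplus_{x\in O_{i_0}}N(x)$; since ${}^gP_i\approx P_{gi}$ and $|O_i|=n$ this equals $\bigoplus_{g}\mathrm{Hom}_\Lambda({}^gP_i,N)$. If $i$ is $G$-fixed then $\bar e_{i_0}=\sum_{\rho\in\hat G}e_{i_0\rho}$ is not primitive and, comparing $K$-dimensions, $F_\lambda P_i$ is only a proper direct summand of $\bar e_{i_0}\bar\Lambda$; here one must identify $F_\lambda P_i$ explicitly, read off $\mathrm{Hom}$ out of it by the Yoneda lemma on the relevant primitive summand, and match it with $\bigoplus_{g}\mathrm{Hom}_\Lambda({}^gP_i,N)=\mathrm{Hom}_\Lambda(P_i,N)^{|G|}=N(i)^{|G|}$. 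Naturality in $N$ of all these identifications is automatic; naturality in $M$, i.e.\ compatibility with morphisms between projectives, is where the hypothesis that $F\colon\Lambda\to\bar\Lambda$ is itself a $G$-semi-covering is used, and since that property is itself case-split one must verify it over the combinations (free/free, free/fixed, fixed/free, fixed/fixed) of source and target vertices.

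The fixed-vertex part of the base case is where I expect the main difficulty: there $\bar\Lambda$ is not basic and $F$ is a genuine semi-covering, so the textbook fact that the pushdown of a projective is projective fails, and $F_\lambda P_i$ and its functoriality have to be pinned down by hand. A clean way to organise this is via the induction functor $-\otimes_\Lambda\bar\Lambda$, to which $F_\lambda$ is equivalent by Morita theory (as $\bar e$ is a basic idempotent of $\bar\Lambda=\Lambda G$): the classical Mackey-type identity $\mathrm{Res}\,(F_\lambda N)\approx\bigoplus_{g\in G}{}^gN$ together with the adjunction $\mathrm{Ind}\dashv\mathrm{Res}$ then yields the displayed formula at once, while with the excerpt's combinatorial definition of $F_\lambda$ the same conclusion is reached by the presentation argument above. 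Once the base case and its naturality are in place, the five-lemma step and the reduction to the three displayed cases are routine.
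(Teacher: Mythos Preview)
The paper does not prove this statement; it is quoted verbatim from \cite[Theorem~3.11]{GoSaTr25} and serves here only as background for the later results. So there is no ``paper's own proof'' to compare against.

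Your strategy is nonetheless sound, and the observation that the three displayed cases are all specialisations of the single unconditional formula $\mathrm{Hom}_{\bar\Lambda}(F_\lambda M,F_\lambda N)\approx\bigoplus_{g\in G}\mathrm{Hom}_\Lambda({}^gM,N)$ is the right way to organise the argument. The cleanest route is the one you mention last: identify $F_\lambda$ with induction along $\Lambda\hookrightarrow\Lambda G$ (up to the Morita equivalence implemented by $\bar e$), use the standard biadjunction between induction and restriction for skew group algebras when $|G|$ is invertible, and invoke the Mackey-type decomposition $\mathrm{Res}\circ\mathrm{Ind}\approx\bigoplus_{g\in G}{}^g(-)$. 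Within the present paper that last isomorphism is stated (in the proof of Proposition~\ref{GSTAB}(2)) and the adjunction is Proposition~\ref{gcad}; note however that the paper's proof of Proposition~\ref{gcad} already \emph{uses} Theorem~\ref{GCM}, so if you want a self-contained argument you must establish the adjunction and the Mackey identity directly from the induction/restriction description rather than quoting those results.

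One small correction: your claim that ``the pushdown of a projective is projective fails'' at a fixed vertex is not accurate. Since $\bar\Lambda$ is free as a left $\Lambda$-module, induction sends projectives to projectives; what fails is \emph{indecomposability}, exactly as recorded in Remark~\ref{indsk}. So in your projective-base-case analysis $F_\lambda P_i$ is always projective, just not indecomposable when $G_i=G$, and the Yoneda identification goes through once you split $\bar e_{i_0}$ into its primitive pieces $e_{i_0\rho}$.
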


The following remark \cite{GoSaTr25} show that, unlike Galois covering, Galois semi-covering does not necessarily preserve indecomposable modules.
\begin{rmk}\label{indsk}
For all ${g\in G}$ there exist a $\bar M \in \mathrm{Ind}\mbox{-}\bar\Lambda$ such that,
$$F_\lambda (^gM)= \begin{cases}\bar M &\mbox{ if } G_{M}\neq G;\\\bigoplus_{\hat{g}\in \hat{G}} \hat{g} \bar M &\mbox{ if }G_{M}= G.\end{cases}$$ 
\end{rmk}

The following theorem \cite[Corollary~5.2]{ReRi85} shows the involutive act of skewness.
\begin{theorem}\label{inv}
For an abelian group $G$, the algebra $\Lambda$ is Morita equivalent with the skew group algebra of $(\bar\Lambda)\hat{G}$, where the action of $\hat{G}$ on $\bar\Lambda$ is defined by $\chi (\lambda \otimes g): = \chi(g) \lambda \otimes g$ for $\lambda \in \Lambda, g \in G$.  
\end{theorem}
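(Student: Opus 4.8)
The plan is to realise $C:=(\bar\Lambda)\hat G$, which as a $K$-space is $\Lambda\otimes_K KG\otimes_K K\hat G$, as the corner ring of a full idempotent whose corner is isomorphic to $\Lambda$; the Morita equivalence then drops out of the standard fact that a ring $C$ is Morita equivalent to $\epsilon C\epsilon$ whenever $\epsilon\in C$ is an idempotent with $C\epsilon C=C$. Before that I would record the routine verifications: that $\chi(\lambda\otimes g):=\chi(g)\,\lambda\otimes g$ really defines an action of $\hat G$ on $\bar\Lambda$ by $K$-algebra automorphisms — compatibility with the twisted product of $\bar\Lambda$ reduces to $\chi(gh)=\chi(g)\chi(h)$, and composing the actions of two characters amounts to their product in $\hat G$ — and that throughout $|G|$ is invertible in $K$ (as elsewhere in the paper), which is exactly what makes the idempotent below available.

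Next I would put
$$\epsilon\;=\;\frac{1}{|G|}\sum_{\chi\in\hat G}(1\otimes 1\otimes\chi)\ \in\ C .$$
Since $\hat G$ is a group, multiplying $\epsilon$ by any $1\otimes1\otimes\psi$ on either side just permutes the summands, so $\epsilon^2=\epsilon$ and $(1\otimes1\otimes\psi)\,\epsilon=\epsilon=\epsilon\,(1\otimes1\otimes\psi)$; and because each $\chi$ fixes $\Lambda\otimes1$, the subalgebra $\Lambda\otimes1\otimes1$ commutes with $\epsilon$. The crucial computation is
$$\epsilon\,(1\otimes g\otimes1)\,\epsilon\;=\;\Bigl(\tfrac{1}{|G|}\sum_{\chi\in\hat G}\chi(g)\Bigr)\,(1\otimes g\otimes1)\,\epsilon\;=\;\delta_{g,1}\,\epsilon ,$$
using the orthogonality relation $\tfrac{1}{|G|}\sum_{\chi}\chi(g)=\delta_{g,1}$. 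Writing a general element of $C$ as a combination of products $(\lambda\otimes1\otimes1)(1\otimes g\otimes1)(1\otimes1\otimes\chi)$ and sandwiching by $\epsilon$, these facts give $\epsilon C\epsilon=(\Lambda\otimes1\otimes1)\,\epsilon$, and $\lambda\mapsto(\lambda\otimes1\otimes1)\,\epsilon$ is a unital algebra isomorphism $\Lambda\xrightarrow{\ \sim\ }\epsilon C\epsilon$ (injectivity because $(\lambda\otimes1\otimes1)\epsilon=\tfrac{1}{|G|}\sum_{\chi}\lambda\otimes1\otimes\chi$ is nonzero for $\lambda\neq0$).

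Finally I would check fullness: using the dual orthogonality relation (that $\sum_{g\in G}\chi(g)$ equals $|G|$ when $\chi$ is trivial and $0$ otherwise),
$$\sum_{g\in G}(1\otimes g^{-1}\otimes1)\,\epsilon\,(1\otimes g\otimes1)\;=\;\frac{1}{|G|}\sum_{\chi\in\hat G}\Bigl(\sum_{g\in G}\chi(g)\Bigr)(1\otimes1\otimes\chi)\;=\;1_C ,$$
so $1_C\in C\epsilon C$ and $\epsilon$ is full. Hence $C=(\bar\Lambda)\hat G$ is Morita equivalent to $\epsilon C\epsilon\cong\Lambda$; in fact the same bookkeeping identifies $C$ with $M_{|G|}(\Lambda)$, which is the content of \cite[Corollary~5.2]{ReRi85} (equivalently, the Hopf-duality isomorphism $(A\#H)\#H^{\ast}\cong M_{\dim H}(A)$ applied with $H=KG$ and $H^{\ast}\cong K\hat G$). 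The only real obstacle is keeping the twisted multiplication of the \emph{iterated} skew group algebra straight together with the asymmetry that the $\hat G$-action is trivial on $\Lambda\otimes1$ but diagonal on $1\otimes KG$ — it is exactly this asymmetry that forces all the cross terms $\epsilon(1\otimes g\otimes1)\epsilon$ with $g\neq1$ to vanish.
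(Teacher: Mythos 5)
Your proof is correct, and it is worth noting that the paper itself offers no argument for this statement: Theorem \ref{inv} is simply quoted from \cite[Corollary~5.2]{ReRi85}, so your write-up is a genuinely self-contained alternative to a citation. What you prove is in effect the abelian case of the duality theorem for iterated skew group algebras ($\Lambda G\hat G\cong M_{|G|}(\Lambda)$): Reiten--Riedtmann obtain it by identifying $(\bar\Lambda)\hat G$ with a matrix algebra (equivalently an endomorphism algebra of $\Lambda G$ as a $\Lambda$-module), whereas you extract only the Morita equivalence, via the averaging idempotent $\epsilon=\frac{1}{|G|}\sum_{\chi}(1\otimes1\otimes\chi)$, the corner computation $\epsilon\,(1\otimes g\otimes1)\,\epsilon=\delta_{g,1}\epsilon$ giving $\epsilon C\epsilon\cong\Lambda$, and the fullness identity $\sum_{g}(1\otimes g^{-1}\otimes1)\epsilon(1\otimes g\otimes1)=1_C$. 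All the multiplications in the iterated skew product check out (the $\hat G$-action is trivial on $\Lambda\otimes1$ and scales $1\otimes g$ by $\chi(g)$, which is exactly what your two orthogonality relations exploit), and the corner-ring Morita criterion you invoke is standard. The only hypotheses you lean on beyond the bare statement are that $\mathrm{char}\,K\nmid|G|$ and that $\hat G$ separates the elements of $G$ (so that $\frac{1}{|G|}\sum_\chi\chi(g)=\delta_{g,1}$ holds); these are supplied by the paper's standing assumptions that $K$ is algebraically closed and $|\hat G|=|G|$, and you rightly flag the invertibility of $|G|$. Your concluding remark that the same bookkeeping yields $C\cong M_{|G|}(\Lambda)$ is exactly the stronger form proved in \cite{ReRi85}, so the two routes meet; yours buys elementarity and independence from the reference, at the cost of not exhibiting the explicit matrix-algebra isomorphism.
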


Since the algebra $\Lambda$ appears as the skew algebra of $\bar{\Lambda}$ under the action of the dual group $\hat{G}$ of $G$. We define the corresponding Galois semi-covering functor as $F^\lambda: \mathrm{mod}\mbox{-}\bar\Lambda \to \mathrm{mod}\mbox{-}\Lambda$. 
\end{section}

%\begin{proposition}??Assume that $G$ acts on an algebra $\Lambda$ and $\bar\Lambda$ is the associated skew group algebra. Consider a morphism $f: M\to N$ in $\mathrm{mod}\mbox{-}\Lambda$. Then\begin{enumerate}\item If $h:M\to L$ or $h:L\to N$ is a morphism in $\mathrm{mod}\mbox{-}\Lambda$, then $h$ factorizes through $f$ if and only if $F_\lambda(h)$ factorizes through $F_\lambda(f)$ in $\mathrm{mod}\mbox{-}\Lambda$;\item The morphism $f$ is a section, retraction, or isomorphism in $\mathrm{mod}\mbox{-}\Lambda$ if and only if $F_\lambda(f)$ is a section, retraction, or isomorphism, respectively in $\mathrm{mod}\mbox{-}\bar\Lambda$.\end{enumerate}\end{proposition}\begin{proof}Let a morphism $h:M\to L$ in $\mathrm{mod}\mbox{-}\Lambda$ factorizes through $f$, then $F_\lambda(h)$ also factorize through $F_\lambda(f)$. Conversely, if $F_\lambda(h)=t\circ F_\lambda(f)$, then applying the adjoint functor $G_\lambda$ both sides we have, $\bigoplus_{g\in G}{}^gh=G_\lambda(t)\circ(\bigoplus_{g\in G}{}^gf)$.\end{proof}

\section{Galois semi-covering functor over morphism category}\label{MGC}
In this section, we present a Galois semi-covering functor between the (mono)morphism categories of an algebra and its skew group algebra. Recall that for an abelian category $A$, one can define the morphism category $\mathrm{H}(A)$, also an abelian category as follows. The objects of $\mathrm{H}(A)$ are all morphisms $(X\xrightarrow{f} Y)$ in $A$. The morphisms from object $(X\xrightarrow{f} Y)$  to $(X'\xrightarrow{f'} Y')$ are pairs $(a,b)$, where $a:X\to X'$ and $b:Y\to Y'$ such that $b\circ f = f'\circ a$. The composition of morphisms is component-wise. The monomorphism category of $A$, denoted $\mathrm{S}(A)$, is the full subcategory of $\mathrm{H}(A)$ consisting of all monomorphisms in $A$. Likewise $F_\lambda: \mathrm{mod}\mbox{-}\Lambda \to \mathrm{mod}\mbox{-}\bar\Lambda$, we show that the induced functors $\mathrm{H}F_\lambda: \mathrm{H}(\mathrm{mod}\mbox{-}\Lambda)\to \mathrm{H}(\mathrm{mod}\mbox{-}\bar{\Lambda})$ and $\mathrm{S}F_\lambda: \mathrm{S}(\mathrm{mod}\mbox{-}\Lambda)\to \mathrm{S}(\mathrm{mod}\mbox{-}\bar{\Lambda})$ are also Galois semi-covering.

%\begin{definition}[Galois quasi-covering functor]Let $A,B$ be linear categories with $G$ a group acting on $A$. A functor $F: A\to B$ is called a $G$-quasi covering if for any $X,Y\in Ob(A)$, the following hold:$$B(FX,FY)\approx \begin{cases}A(X,Y)&\mbox{if } G^{XY}\neq G;\\A^{|G|}(X,Y)&\mbox{ if }G_{XY}= G.\end{cases}$$  Fix a notation that $G^{XY}=G_X\cup G_Y$. \end{definition} 
\vspace{0.1in}
\noindent{\textbf{G-action over $\mathrm{H}(\mathrm{mod}\mbox{-}\Lambda)$}}: Let $g\in G$ and $\mathbb{X}=(X\xrightarrow{f}Y)$ be an object in $\mathrm{H}(\mathrm{mod}\mbox{-}\Lambda)$. We denote by ${}^g\mathbb{X}=({}^gX\xrightarrow{{}^gf} {}^gY)$ the object in $\mathrm{H}(\mathrm{mod}\mbox{-}\Lambda)$ by acting $g$ component-wise on $\mathbb{X}$. Assume a morphism $\psi=(\psi_1, \psi_2):\mathbb{X}\to \mathbb{Y}$ in $\mathrm{H}(\mathrm{mod}\mbox{-}\Lambda)$ is given. Define ${}^g\psi=({}^g\psi_1, {}^g\psi_2)$ a morphism in $\mathrm{H}(\mathrm{mod}\mbox{-}\Lambda)$ from ${}^g\mathbb{X}$ to ${}^g\mathbb{Y}$ obtained by acting $g$ component-wise on $\psi$. We observe that $g$ induces an automorphism on $\mathrm{H}(\mathrm{mod}\mbox{-}\Lambda)$. Denote again by $g$ the automorphism on $\mathrm{H}(\mathrm{mod}\mbox{-}\Lambda)$ induced by the automorphism $g:\Lambda \to \Lambda$. The set of all the induced automorphisms $g: \mathrm{H}(\mathrm{mod}\mbox{-}\Lambda) \to \mathrm{H}(\mathrm{mod}\mbox{-}\Lambda)$ makes a group of automorphisms on $\mathrm{H}(\mathrm{mod}\mbox{-}\Lambda)$, which is isomorphic to $G$. We identify the newly obtained group by $G$. This define an action of $G$ on $\mathrm{H}(\mathrm{mod}\mbox{-}\Lambda)$. Since $g: \Lambda \to \Lambda$ is an automorphism, $\psi$ is a monomorphism if and only if so is ${}^g\psi$. Thus, the action of $G$ on $\mathrm{H}(\mathrm{mod}\mbox{-}\Lambda)$ is restricted on $\mathrm{S}(\mathrm{mod}\mbox{-}\Lambda)$. 

\vspace{0.1in}
\noindent{\textbf{Induced functor $\mathrm{H}F_\lambda$ (resp. $\mathrm{S}F_\lambda$) over $\mathrm{H}(\mathrm{mod}\mbox{-}\Lambda)$ (rep. $\mathrm{S}(\mathrm{mod}\mbox{-}\Lambda)$)}}: Using the same idea (applying component-wisely a functor), we can extend the functor $F_\lambda: \mathrm{mod}\mbox{-}\Lambda\to \mathrm{mod}\mbox{-}\bar{\Lambda}$ to the functor $HF_\lambda: \mathrm{H}(\mathrm{mod}\mbox{-}\Lambda) \to \mathrm{H}(\mathrm{mod}\mbox{-}\bar{\Lambda})$, sending $(X\xrightarrow{f} Y) \in \mathrm{H}(\mathrm{mod}\mbox{-}\Lambda)$ to $(F_\lambda(X)\xrightarrow{F_\lambda(f)} F_\lambda(Y))$ in $\mathrm{H}(\mathrm{mod}\mbox{-}\bar{\Lambda})$. Since $F_\lambda$ is exact, we can restrict $\mathrm{H}F_\lambda$ to the corresponding monomorphism categories, denoted by $\mathrm{S}F_\lambda: \mathrm{S}(\mathrm{mod}\mbox{-}\Lambda)\to \mathrm{S} (\mathrm{mod}\mbox{-}\bar{\Lambda})$. Analogously, the functor $\mathrm{H}F^\lambda: \mathrm{H}(\mathrm{mod}\mbox{-}\bar{\Lambda})\to \mathrm{H}(\mathrm{mod}\mbox{-}\Lambda)$ is obtained by applying component-wise the functor $F^\lambda: \mathrm{mod}\mbox{-}\bar{\Lambda}\to \mathrm{mod}\mbox{-}\Lambda$. Since the functor $F^\lambda$ is exact, we also get the restricted functor $\mathrm{S}F^\lambda : \mathrm{S}(\mathrm{mod}\mbox{-}\bar{\Lambda})\to \mathrm{S}(\mathrm{mod}\mbox{-}\Lambda$).

For the sake of simplicity, we sometimes denote $\mathbb{X}= (X\xrightarrow{f} Y)\in \mathrm{H}(\mathrm{mod}\mbox{-}\Lambda)$ as $f_{XY}$ (or, simply $f$ if the source and target is clear). The following Proposition \cite{GoSaTr25} show that, unlike Galois covering, Galois semi-covering does not necessarily preserve irreducible morphisms.

\begin{proposition}\label{3.7}
Assume that $f_{MN} \in \mathrm{Hom}_\Lambda(M,N)$ for some $M,N \in \mathrm{mod}\mbox{-}\Lambda$. Then there exists a $\bar{f}$ with $\bar{f}_{{}^{\hat{g}}\bar{M}{}^{\hat{g}}\bar{N}} \in \mathrm{Hom}_{\bar\Lambda}({}^{\hat{g}}\bar{M},{}^{\hat{g}}\bar{N})$ such that the following hold: 

$$\mathrm{H}F_\lambda(f_{{}^gM,{}^gN})\approx \begin{cases}\uwithtext{$\bar{N}$}_{\hat{g}\in \hat{G}} \bar{f}_{{}^{\hat{g}}\bar{M} \bar{N}} &\mbox{ if } G_{M}= G, G_{N}\neq G;\\\uwithtext{$\bar{M}$}_{\hat{g}\in \hat{G}} \bar{f}_{\bar{M} {}^{\hat{g}}\bar{N}} &\mbox{ if }G_{M}\neq G, G_{N}= G;\\\bigoplus_{\hat{g}\in \hat{G}} \bar f_{{}^{\hat{g}}\bar{M}{}^{\hat{g}}\bar{N}} &\mbox{ if } G_{MN}=G;\\\bar{f}_{\bar{M}\bar{N}}&\mbox{ if }G_{M}, G_{N}\neq G.\end{cases}$$

Where, $\uwithtext{Z}_{i\in I}f_i$ means $f_i$'s for an index set $I$ and $i\in I$ being glued at the module $Z$.
\end{proposition}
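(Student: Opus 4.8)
The plan is to compute $\mathrm{H}F_\lambda$ on objects directly and to reduce everything to the module-level statement Remark~\ref{indsk}. By definition, $\mathrm{H}F_\lambda(f_{{}^gM,{}^gN})$ is the object $\bigl(F_\lambda({}^gM)\xrightarrow{F_\lambda({}^gf)}F_\lambda({}^gN)\bigr)$ of $\mathrm{H}(\mathrm{mod}\mbox{-}\bar\Lambda)$, so it is determined by its source, its target and the map between them. Remark~\ref{indsk} says the source is $\bar M$ when $G_M\neq G$ and $\bigoplus_{\hat g\in\hat G}{}^{\hat g}\bar M$ when $G_M=G$, and likewise for the target; the four clauses of the statement therefore correspond exactly to the four combinations of ``single module'' versus ``direct sum over $\hat G$'' for source and target, and the proof becomes a case check.

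Two of the cases are essentially formal. If $G_M,G_N\neq G$ the image object is $\bigl(\bar M\xrightarrow{F_\lambda({}^gf)}\bar N\bigr)$, and we set $\bar f_{\bar M\bar N}:=F_\lambda({}^gf)$. If exactly one of $M,N$ is $G$-stable, say $G_M=G\neq G_N$, the source is $\bigoplus_{\hat g}{}^{\hat g}\bar M$ while the target is the single module $\bar N$; restricting $F_\lambda({}^gf)$ to each summand ${}^{\hat g}\bar M$ produces a map $\bar f_{{}^{\hat g}\bar M\bar N}\colon{}^{\hat g}\bar M\to\bar N$, and by the very meaning of the amalgamation symbol used in the statement an object of $\mathrm{H}(\mathrm{mod}\mbox{-}\bar\Lambda)$ with direct-sum source and common target is the gluing of these pieces along $\bar N$. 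The case $G_N=G\neq G_M$ is dual, the gluing now being along the common source $\bar M$. Thus these two cases follow at once from the non-stable case together with Remark~\ref{indsk}.

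The case with content is $G_{MN}=G$, where $M$ and $N$ are both $G$-stable: here the image object is $\bigl(\bigoplus_{\hat g}{}^{\hat g}\bar M\xrightarrow{\phi}\bigoplus_{\hat g}{}^{\hat g}\bar N\bigr)$ with $\phi=F_\lambda({}^gf)$, and the point is to show that in these twisted coordinates $\phi$ is block-diagonal, $\phi=\bigoplus_{\hat g}{}^{\hat g}\bar f$ for one map $\bar f=\bar f_{\bar M\bar N}\colon\bar M\to\bar N$, so that the object splits in $\mathrm{H}(\mathrm{mod}\mbox{-}\bar\Lambda)$ as $\bigoplus_{\hat g}\bigl({}^{\hat g}\bar M\xrightarrow{{}^{\hat g}\bar f}{}^{\hat g}\bar N\bigr)$. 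For this I would use the $\hat G$-action on $\mathrm{mod}\mbox{-}\bar\Lambda$ furnished by Theorem~\ref{inv}, which acts trivially on $\Lambda\subseteq\bar\Lambda$ and permutes the summands ${}^{\hat g}\bar M$ of $F_\lambda M$ simply transitively: the map $\phi=F_\lambda({}^gf)$ is $\hat G$-equivariant, hence its diagonal blocks are $\hat G$-translates of one another, and its off-diagonal blocks ${}^{\hat g}\bar M\to{}^{\hat h}\bar N$ ($\hat g\neq\hat h$) must vanish. This last vanishing is the crux; one obtains it by combining the equivariance with the dimension count $\dim\mathrm{Hom}_{\bar\Lambda}(F_\lambda M,F_\lambda N)=|G|\cdot\dim\mathrm{Hom}_\Lambda(M,N)$ of Theorem~\ref{GCM} and the comparison of $\mathrm{Hom}_{\bar\Lambda}(\bar M,\bar N)$ with $\mathrm{Hom}_\Lambda(M,N)$ via the (faithful) restriction functor $\mathrm{mod}\mbox{-}\bar\Lambda\to\mathrm{mod}\mbox{-}\Lambda$, under which every ${}^{\hat g}\bar M$ goes to $M$.

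The main obstacle is exactly this last case: showing that $\phi=F_\lambda({}^gf)$ has no off-diagonal components relative to the twisted decomposition $\bigoplus_{\hat g}{}^{\hat g}\bar M$, equivalently reconciling the manifestly $Q_0$-orbit-wise block form of $F_\lambda(f)$ from Definition~\ref{dgsc} (where $F_\lambda(f)$ acts as $\bigoplus_{i\in O_{i_0}}f_i$ on $F_\lambda M(\bar e_{i_0})$) with its $\hat G$-twisted form. I expect the combination of the $\hat G$-equivariance of $F_\lambda$ with the Hom-space count of Theorem~\ref{GCM} to be the decisive ingredient.
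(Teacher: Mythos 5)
First, a caveat: the paper does not prove this proposition at all; it is quoted from \cite{GoSaTr25}, so there is no in-paper argument to compare yours with, and your proposal has to stand on its own. It does not, because the step you yourself identify as the crux is false as stated. You claim that in the case $G_{MN}=G$ the $\hat G$-equivariance of $F_\lambda$ forces the off-diagonal blocks of $F_\lambda(f)$ with respect to the decompositions $F_\lambda M\cong\bigoplus_{\hat g}{}^{\hat g}\bar M$ and $F_\lambda N\cong\bigoplus_{\hat g}{}^{\hat g}\bar N$ to vanish. Take $\Lambda=K[x]/(x^2)$ with $G=\{1,\sigma\}$, $\sigma(x)=-x$, $\mathrm{char}\,K\neq 2$, and $M=N=\Lambda_\Lambda$ (indecomposable and $G$-stable), $f=$ left multiplication by $x$. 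Then $F_\lambda M=\bar\Lambda=e_+\bar\Lambda\oplus e_-\bar\Lambda$ with $e_\pm=\tfrac12(1\pm\sigma)$, these two summands being $\bar M$ and ${}^{\hat g}\bar M$, and $F_\lambda f$ is left multiplication by $x$; since $xe_\pm=e_\mp x$, this map sends $e_+\bar\Lambda$ into $e_-\bar\Lambda$ and vice versa, i.e.\ its diagonal blocks vanish and it is concentrated entirely in the off-diagonal blocks. The conclusion of the proposition survives here only because one is free to re-choose which summand of $F_\lambda N$ is called $\bar N$ (pair $e_+\bar\Lambda$ with $e_-\bar\Lambda$): in general the matching of summands involves a twist determined by the $\hat G$-isotypic degree of $f$ for the conjugation action attached to chosen extensions $\bar M,\bar N$, and equivariance only yields the circulant structure (the block ${}^{\hat g}\bar M\to{}^{\hat h}\bar N$ is the $\hat g$-translate of the block $\bar M\to{}^{\hat h\hat g^{-1}}\bar N$), never the vanishing you assert. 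A dimension count from Theorem \ref{GCM} and faithfulness of restriction cannot rescue a statement that is already false for this $f$.

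The deeper problem is that when $f$ is not $\hat G$-homogeneous the asserted splitting is genuinely delicate and is not a formal consequence of the ingredients you list. For instance, with $\Lambda=K\langle x,y\rangle/(x,y)^2$, $\sigma(x)=x$, $\sigma(y)=-y$, $M=N=\Lambda_\Lambda$ and $f=$ left multiplication by $x+y$, the map $F_\lambda f$ has all four blocks nonzero between $e_+\bar\Lambda\oplus e_-\bar\Lambda$ and itself, and a short computation with the (radical-square-zero) Hom-spaces between $e_+\bar\Lambda$ and $e_-\bar\Lambda$ shows that no automorphisms of source and target bring it to block-diagonal or block-antidiagonal form; so the object does not split into $|G|$ pieces each with indecomposable source and target. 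Thus your reduction of the third case to ``equivariance plus the Hom-space count'' would fail even after correcting the vanishing claim, and any honest proof of this case must either restrict the class of $f$ (the source \cite{GoSaTr25} states it in the context of irreducible morphisms) or supply a substantially different argument; your sketch leaves exactly this point open. As a secondary remark, in the mixed cases $G_M=G\neq G_N$ you also never verify the part of the statement saying that the glued components are the $\hat G$-translates of a single morphism $\bar f$, rather than arbitrary maps ${}^{\hat g}\bar M\to\bar N$; this again needs the equivariance of $F_\lambda f$ together with a chosen $\hat G$-structure on $\bar N$, not just Remark \ref{indsk} applied to source and target separately.
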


The next result presents the adjoint functor of the functor $F_\lambda$. 

\begin{proposition}\label{gcad}
The Galois semi-covering functor $F^\lambda: \mathrm{mod}\mbox{-}\bar\Lambda \to \mathrm{mod}\mbox{-}\Lambda$ is the adjoint functor of the functor $F_\lambda: \mathrm{mod}\mbox{-}\Lambda \to \mathrm{mod}\mbox{-}\bar\Lambda$.
\end{proposition}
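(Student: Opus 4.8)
The plan is to exhibit a natural isomorphism
$\mathrm{Hom}_{\bar\Lambda}(F_\lambda M,\bar N)\cong\mathrm{Hom}_\Lambda(M,F^\lambda\bar N)$, natural in $M\in\mathrm{mod}\mbox{-}\Lambda$ and $\bar N\in\mathrm{mod}\mbox{-}\bar\Lambda$, which exhibits $F^\lambda$ as a right adjoint of $F_\lambda$; applying the same argument through Theorem \ref{inv}, with the roles of $G$ and $\hat G$ exchanged, then yields the adjunction in the other direction, so that $F_\lambda$ and $F^\lambda$ are adjoint on both sides. The backbone of the argument is the pair of composition-of-coverings identities $F^\lambda F_\lambda M\cong\bigoplus_{g\in G}{}^gM$ and $F_\lambda F^\lambda\bar N\cong\bigoplus_{\hat g\in\hat G}{}^{\hat g}\bar N$, natural in $M$ and $\bar N$ respectively. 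These follow from the explicit description in Definition \ref{dgsc} together with Remark \ref{indsk} and its $\hat G$-analogue, and encode the fact that $\Lambda\hookrightarrow\bar\Lambda$ is a Frobenius extension (with $|G|$ invertible in $K$), so that induction agrees with coinduction.

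With these in hand, I would define the unit $\eta_M\colon M\to F^\lambda F_\lambda M$ as the inclusion of the $g=e$ summand under the identification $F^\lambda F_\lambda M\cong\bigoplus_{g\in G}{}^gM$, and the counit $\varepsilon_{\bar N}\colon F_\lambda F^\lambda\bar N\to\bar N$ as the projection onto the $\hat g=1$ summand of $F_\lambda F^\lambda\bar N\cong\bigoplus_{\hat g\in\hat G}{}^{\hat g}\bar N$, and check that these assemble into natural transformations (the only delicate point being the full-stabilizer cases, where Remark \ref{indsk} records exactly how the summand decompositions of $F_\lambda({}^gM)$ and $F^\lambda(\cdot)$ behave). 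Then I would verify the triangle identities $(\varepsilon F_\lambda)\circ(F_\lambda\eta)=\mathrm{id}_{F_\lambda}$ and $(F^\lambda\varepsilon)\circ(\eta F^\lambda)=\mathrm{id}_{F^\lambda}$ by evaluating on a projective generator $e_i\Lambda$: since $F_\lambda$ and $F^\lambda$ are exact and carry projectives to projectives, and homomorphisms out of projectives are computed by evaluation, each triangle identity collapses to the elementary bookkeeping that composing "include the $e$-diagonal" with "project to the $1$-component" through $F_\lambda$ (resp.\ $F^\lambda$) returns the identity.

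As an independent cross-check, and an alternative route that sidesteps writing $\eta,\varepsilon$ out by hand, one can compare both sides via Theorem \ref{GCM}. Taking $\bar N=F_\lambda N$, Theorem \ref{GCM} identifies $\mathrm{Hom}_{\bar\Lambda}(F_\lambda M,F_\lambda N)$ with $\bigoplus_{g\in G}\mathrm{Hom}_\Lambda({}^gM,N)$ when $G_M\neq G$ (and with the symmetric expressions in the remaining cases), while $\mathrm{Hom}_\Lambda(M,F^\lambda F_\lambda N)\cong\mathrm{Hom}_\Lambda\big(M,\bigoplus_{g\in G}{}^gN\big)\cong\bigoplus_{g\in G}\mathrm{Hom}_\Lambda({}^{g^{-1}}M,N)$, and the two agree. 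To pass from $\bar N$ in the essential image of $F_\lambda$ to an arbitrary $\bar N$, one uses that $\bar N$ is a direct summand of $F_\lambda F^\lambda\bar N$ by the composition identity above, together with additivity of $\mathrm{Hom}$, $F_\lambda$ and $F^\lambda$ and the naturality already established.

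The step I expect to be the main obstacle is bookkeeping the non-free part of the action: proving the composition identities $F^\lambda F_\lambda\cong\bigoplus_{g\in G}{}^g(-)$ and $F_\lambda F^\lambda\cong\bigoplus_{\hat g\in\hat G}{}^{\hat g}(-)$ \emph{naturally}, not merely objectwise, in the presence of the idempotent truncations $\bar e_{i_0}$ and the vertices $(i_0,\rho)$ with $\rho\in\hat G_{i_0}$, and then confirming that the resulting $\eta$ and $\varepsilon$ are genuine natural transformations on morphisms whose source or target has full stabilizer. Once these identities are in place with their naturality, the triangle identities and hence the adjunction are routine.
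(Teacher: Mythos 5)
Your plan is correct, but it takes a genuinely different route from the paper. The paper fixes $C\in\mathrm{mod}\mbox{-}\Lambda$ and $D\in\mathrm{mod}\mbox{-}\bar\Lambda$ and proves the objectwise vector-space isomorphism $\mathrm{Hom}_\Lambda(F^\lambda D,C)\approx\mathrm{Hom}_{\bar\Lambda}(D,F_\lambda C)$ by a four-case analysis on the stabilizers $G_D,G_C$: it decomposes $F^\lambda D$ and $F_\lambda C$ via Remark \ref{indsk}, feeds the pieces into the semi-covering formulas of Theorem \ref{GCM} and Proposition \ref{3.7}, and in the mixed cases argues by comparing $\mathrm{Hom}_{\bar\Lambda}(F_\lambda \bar D,F_\lambda C)$ computed two ways (essentially a dimension count). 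You instead build the adjunction structurally: the composition identities $F^\lambda F_\lambda\cong\bigoplus_{g\in G}{}^g(-)$ and $F_\lambda F^\lambda\cong\bigoplus_{\hat g\in\hat G}{}^{\hat g}(-)$ (which are indeed the canonical isomorphisms the paper itself invokes, e.g.\ in Proposition \ref{GSTAB}), unit and counit given by inclusion of, and projection onto, the identity summand, and the triangle identities; the opposite-sided adjunction then follows from Theorem \ref{inv}, reflecting that the pair is biadjoint (Frobenius picture). Your cross-check paragraph, which reduces to the essential image of $F_\lambda$ via Theorem \ref{GCM} and then uses that $\bar N$ is a summand of $F_\lambda F^\lambda\bar N$, is the variant closest in spirit to the paper. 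What your route buys: it addresses naturality and the unit/counit data explicitly (the paper only exhibits hom-space isomorphisms, leaving naturality implicit), and it directly establishes the direction $\mathrm{Hom}_{\bar\Lambda}(F_\lambda X,M)\cong\mathrm{Hom}_\Lambda(X,F^\lambda M)$ that the paper actually uses afterwards (in the remark following the proposition and in Lemma \ref{HADJ}), whereas the paper's computation is phrased for the other side. What the paper's route buys is brevity: no verification of triangle identities is needed, at the cost of rigor about naturality. Two points you should still spell out: checking the triangle identities only on projective generators requires the standard extra step that both composites are natural endotransformations of exact additive functors, so agreement on projectives propagates along projective presentations; and, as you yourself flag, the identifications $F^\lambda F_\lambda\cong\bigoplus_g{}^g(-)$ and $F_\lambda F^\lambda\cong\bigoplus_{\hat g}{}^{\hat g}(-)$ must be chosen compatibly (naturally, including at objects with full stabilizer) for the unit and counit to be well defined --- that is where the real work of your approach sits.
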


\begin{proof}
We show that for any $C\in \mathrm{mod}\mbox{-}\Lambda$ and $D\in \mathrm{mod}\mbox{-}\bar\Lambda$, there is an isomorphism of vector spaces $\mathrm{Hom}_\Lambda(F^\lambda D,C) \approx \mathrm{Hom}_{\bar\Lambda}(D,F_\lambda C)$. Following are the possible cases depending whether $D$ and $C$ are stable under $G$-action:
\begin{itemize}
    \item[$G_D, G_C=G$] We have, $\mathrm{Hom}_\Lambda(F^\lambda D,C)=\mathrm{Hom}_\Lambda(\bigoplus_{g\in G} {}^g \bar{D},C)=\bigoplus_{g\in G}\mathrm{Hom}_\Lambda({}^g\bar{D},C)$ by Remark \ref{indsk}. Also, $\mathrm{Hom}_{\bar\Lambda}(D, F_\lambda C)=\mathrm{Hom}_{\bar\Lambda}(D, \bigoplus_{\hat{g}\in \hat{G}} {}^{\hat{g}}\bar{C})=\bigoplus_{\hat{g}\in \hat{G}}\mathrm{Hom}_{\bar\Lambda}(D,{}^{\hat{g}}\bar{C})$. Then the result follows by Theorem \ref{GCM} and Proposition \ref{3.7}.

    \item[$G_D\neq G, G_C\neq G$] We have, $\mathrm{Hom}_\Lambda(F^\lambda D,C)=\mathrm{Hom}_\Lambda(\bar{D},C)$ by Remark \ref{indsk}. Moreover, $\mathrm{Hom}_{\bar\Lambda}(D, F_\lambda C)=\mathrm{Hom}_{\bar\Lambda}(D, \bar{C})$. Then the result follows by Theorem \ref{GCM} and Proposition \ref{3.7}.
    
    \item[$G_D\neq G, G_C=G$] We have, $\mathrm{Hom}_\Lambda(F^\lambda D,C)=\mathrm{Hom}_\Lambda(\bar{D},C)$ by Remark \ref{indsk}. Moreover, $\mathrm{Hom}_{\bar\Lambda}(D, F_\lambda C)=\mathrm{Hom}_{\bar\Lambda}(D, \bigoplus_{\hat{g}\in \hat{G}} {}^{\hat{g}}\bar{C})=\bigoplus_{\hat{g}\in \hat{G}}\mathrm{Hom}_{\bar\Lambda}(D,{}^{\hat{g}}\bar{C})$. Now, by Theorem \ref{GCM}, we have $\mathrm{Hom}_{\bar\Lambda}(F_\lambda \bar{D}, F_\lambda C)=\mathrm{Hom}_\Lambda^{|G|}(\bar{D}, C)= \mathrm{Hom}_\Lambda^{|G|}(F^\lambda D,C)$. Again using Remark \ref{indsk}, we have $\mathrm{Hom}_{\bar\Lambda}(F_\lambda \bar{D}, F_\lambda C)=\mathrm{Hom}_{\bar\Lambda}(\bigoplus_{g\in G} {}^gD, F_\lambda C)=\bigoplus_{g\in G} \mathrm{Hom}_{\bar\Lambda}({}^gD, F_\lambda C)$. Since, $\mathrm{Hom}_{\bar\Lambda}({}^{g_1}D, F_\lambda C)\approx \mathrm{Hom}_{\bar\Lambda}({}^{g_2}D, F_\lambda C)$ for any $g_1, g_2\in G$ then it is clear that $\mathrm{Hom}_{\bar\Lambda}(D, F_\lambda C)\approx \mathrm{Hom}_\Lambda(F^\lambda D,C)$.

    \item[$G_D=G, G_C\neq G$] The case is similar to the previous case.
\end{itemize}
Hence the result. \end{proof}

\begin{rmk}
Since $(F_\lambda, F^\lambda)$ is an adjoint pair, for any $X\in \mathrm{mod}\mbox{-}\Lambda$ and $M\in \mathrm{mod}\mbox{-}\bar{\Lambda}$ there are two natural isomorphisms $\eta_{X,M}: \mathrm{Hom}_{\bar{\Lambda}}(F_\lambda(X),M)\to \mathrm{Hom}_\Lambda(X, F^\lambda(M))$ and its inverse $\eta^{-1}_{X,M}: \mathrm{Hom}_\Lambda(X, F^\lambda(M))\to \mathrm{Hom}_{\bar{\Lambda}}(F_\lambda(X),M)$.
\end{rmk}
Next we show that the functors $\mathrm{H}F_\lambda$ and $\mathrm{H}F^\lambda$ play similar role as $F^\lambda$ and $F_\lambda$ respectively.

\begin{proposition}\label{GSTAB}
Suppose $\mathbb{X}, \mathbb{X}_1, \mathbb{X}_2\in \mathrm{H}(\mathrm{mod}\mbox{-}\Lambda)$. Then the following holds.
\begin{enumerate}
\item $\mathrm{H}F_\lambda({}^g\mathbb{X})\approx \mathrm{H}F_\lambda(\mathbb{X})$, for any $g\in G$.
\item $\mathrm{H}F^\lambda \circ \mathrm{H}F_\lambda(\mathbb{X})\approx \bigoplus_{g\in G} {}^g\mathbb{X}$.
\item For indecomposables $\mathbb{X}_1$ and $\mathbb{X}_2$, $\mathrm{H}F_\lambda(\mathbb{X}_1)\approx \mathrm{H}F_\lambda(\mathbb{X}_2)$ implies $\mathbb{X}_1\approx {}^g \mathbb{X}_2$, for some $g\in G$.
\end{enumerate}
\end{proposition}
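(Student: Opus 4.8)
The plan is to reduce all three parts to statements about the module-level functors $F_\lambda$ and $F^\lambda$, exploiting that $\mathrm{H}F_\lambda$, $\mathrm{H}F^\lambda$, the $G$-action and finite direct sums on $\mathrm{H}(\mathrm{mod}\mbox{-}\Lambda)$ are all computed componentwise. Concretely, I would isolate two module-level facts: (a) for each $g\in G$ there is a \emph{natural} isomorphism $\theta^{g}\colon F_\lambda\circ{}^{g}(-)\xrightarrow{\ \sim\ }F_\lambda$ of functors $\mathrm{mod}\mbox{-}\Lambda\to\mathrm{mod}\mbox{-}\bar\Lambda$; and (b) there is a \emph{natural} isomorphism $\xi\colon F^\lambda F_\lambda\xrightarrow{\ \sim\ }\bigoplus_{g\in G}{}^{g}(-)$ of endofunctors of $\mathrm{mod}\mbox{-}\Lambda$. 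The recurring subtlety is that an objectwise isomorphism between two objects of a morphism category need not be an isomorphism there --- for instance $K\xrightarrow{1}K$ and $K\xrightarrow{0}K$ have isomorphic components but are not isomorphic in the morphism category --- and what rescues the argument is precisely the naturality of $\theta^{g}$ and $\xi$, which forces compatibility with the structure morphisms.

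For part (1): the objectwise isomorphism $F_\lambda({}^{g}M)\cong F_\lambda(M)$ is already contained in Remark~\ref{indsk} (both branches there are independent of $g$), and it is natural in $M$ because it is the reindexing of the summands of the fibres $F^{-1}(\bar e_{i_0})$ induced by the permutation $g^{-1}$, using that $F$ is $G$-invariant on objects and that the arrow action of Definition~\ref{dgsc} is $G$-equivariant; this produces $\theta^{g}$. Then, writing $\mathbb{X}=(X\xrightarrow{f}Y)$, the pair $(\theta^{g}_{X},\theta^{g}_{Y})$ is an isomorphism $\mathrm{H}F_\lambda({}^{g}\mathbb{X})\to\mathrm{H}F_\lambda(\mathbb{X})$ in $\mathrm{H}(\mathrm{mod}\mbox{-}\bar\Lambda)$: it is a morphism of $\mathrm{H}$ because the required square $\theta^{g}_{Y}\circ F_\lambda({}^{g}f)=F_\lambda(f)\circ\theta^{g}_{X}$ is the naturality square of $\theta^{g}$ at $f$, and it is invertible because both components are.

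For part (2): I would build $\xi$ from the adjunction $\mathrm{Hom}_\Lambda(F^\lambda-,-)\cong\mathrm{Hom}_{\bar\Lambda}(-,F_\lambda-)$ of Proposition~\ref{gcad}, Theorem~\ref{GCM}, and the Yoneda lemma. For every $C\in\mathrm{mod}\mbox{-}\Lambda$ one has natural isomorphisms $\mathrm{Hom}_\Lambda(F^\lambda F_\lambda X,C)\cong\mathrm{Hom}_{\bar\Lambda}(F_\lambda X,F_\lambda C)\cong\mathrm{Hom}_\Lambda\bigl(\bigoplus_{g\in G}{}^{g}X,\,C\bigr)$, where the second isomorphism is Theorem~\ref{GCM} after checking that its three cases all collapse to $\bigoplus_{g\in G}\mathrm{Hom}_\Lambda({}^{g}X,C)$: if $G_X\neq G$ this is immediate; if $G_C\neq G$ one reindexes $\bigoplus_{g}\mathrm{Hom}_\Lambda(X,{}^{g}C)\cong\bigoplus_{g}\mathrm{Hom}_\Lambda({}^{g^{-1}}X,C)$; and if $G_X=G_C=G$ one uses ${}^{g}X\cong X$. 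These isomorphisms are natural in $C$ and in $X$, so Yoneda yields the natural isomorphism $\xi$, and applying it componentwise exactly as in part (1) gives $\mathrm{H}F^\lambda\circ\mathrm{H}F_\lambda(\mathbb{X})\cong\bigoplus_{g\in G}{}^{g}\mathbb{X}$.

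For part (3): apply $\mathrm{H}F^\lambda$ to the hypothesis and use part (2) to get $\bigoplus_{g\in G}{}^{g}\mathbb{X}_{1}\cong\bigoplus_{g\in G}{}^{g}\mathbb{X}_{2}$ in $\mathrm{H}(\mathrm{mod}\mbox{-}\Lambda)$. Since $\mathrm{H}(\mathrm{mod}\mbox{-}\Lambda)$ is a Krull--Schmidt category (e.g. it is equivalent to the category of finite-dimensional modules over the triangular matrix algebra $\bigl(\begin{smallmatrix}\Lambda&0\\\Lambda&\Lambda\end{smallmatrix}\bigr)$) and each autoequivalence ${}^{g}(-)$ sends the indecomposable $\mathbb{X}_{2}$ to an indecomposable, uniqueness of the decomposition into indecomposables forces $\mathbb{X}_{1}\cong{}^{g}\mathbb{X}_{2}$ for some $g\in G$. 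The main obstacle is the module-level input --- chiefly verifying (b) with genuine naturality (the Yoneda route makes this painless once the case analysis of Theorem~\ref{GCM} is carried out) and, for (a), confirming that the reindexing isomorphism behind Remark~\ref{indsk} is natural and respects the $\bar\Lambda$-module structure of Definition~\ref{dgsc}; the passage to the morphism category is then formal.
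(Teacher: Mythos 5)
Your proposal is correct and follows essentially the same route as the paper: part (1) reduces to the module-level behaviour of $F_\lambda$ on $G$-translates (Proposition~\ref{3.7}/Remark~\ref{indsk}) applied componentwise, part (2) rests on the canonical isomorphism $F^\lambda F_\lambda(C)\approx\bigoplus_{g\in G}{}^gC$ applied to both components of $\mathbb{X}$, and part (3) applies $\mathrm{H}F^\lambda$, invokes part (2), and finishes with the Krull--Schmidt property of $\mathrm{H}(\mathrm{mod}\mbox{-}\Lambda)$. The only difference is that you justify the naturality of the two module-level isomorphisms explicitly (via reindexing for $\theta^g$ and via the adjunction plus Yoneda for $\xi$), a point the paper leaves implicit by calling the isomorphism canonical.
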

\begin{proof}
$(1)$ It follows from the proposition \ref{3.7}.

$(2)$ Given $\mathbb{X}: A\to B$, consider the following isomorphisms
$\mathrm{H}F^\lambda \circ \mathrm{H}F_\lambda(\mathbb{X})\approx (F^\lambda \circ F_\lambda(A)\xrightarrow{F^\lambda \circ F_\lambda(f)} F^\lambda \circ F_\lambda(B))\approx (\bigoplus_{g\in G}{}^gA\xrightarrow{\bigoplus_{g\in G}{}^gf}  \bigoplus_{g\in G}{}^gB) \approx \bigoplus_{g\in G}({}^gA\xrightarrow{{}^gf} {}^gB)\approx \bigoplus_{g\in G}{}^g\mathbb{X}$. The second isomorphism follows from the canonical isomorphism $F^\lambda \circ F_\lambda(C)\approx \bigoplus_{g\in G}{}^gC$, for any $C \in A$. 

%In fact, $\chi_X = (\chi_X, \chi_Y)$ gives the natural isomorphism $\mathrm{H}F^\lambda \circ \mathrm{H}F_\lambda(X)\approx \bigoplus_{g\in G}gX$. In fact, the natural transformation (or A-homomorphism) $\chi_X : \mathrm{H}F^\lambda \circ \mathrm{H}F_\lambda(X)\to \bigoplus_{g\in G}gX, χM,x : F• ◦ Fλ(M)(x) = ⊕g∈GM(gxF(x)) → ⊕g∈GM(g−1x), where χM,x sends the summand M(gxF(x)) at the position g ∈ G by the identity to the summand M((hxg−1)−1x) at theposition hxg−1 ∈ G, here hx is a unique element in G such that x = hxxF(x), gives the functorial isomorphism between the relevant functors.The definitions of $\chi_X, \chi_Y$ are given in Section 2??, and a direct computation shows that $\chi_X$ is a morphism in $\mathrm{H}(\mathrm{mod}\mbox{-}\Lambda)$.

$(3)$ Assume $\mathrm{H}F_\lambda(\mathbb{X}_1)\approx \mathrm{H}F_\lambda(\mathbb{X}_2)$. Applying $\mathrm{H}F^\lambda$, from part (2) it follows $\bigoplus_{g\in G}{}^g\mathbb{X}_1\approx \mathrm{H}F^\lambda \circ \mathrm{H}F_\lambda(\mathbb{X}_1)\approx \mathrm{H}F^\lambda \circ \mathrm{H}F_\lambda(\mathbb{X}_2)\approx \bigoplus_{g\in G}{}^g\mathbb{X}_2$. Hence $\mathbb{X}_1$ is a summand of $\bigoplus_{g\in G}{}^g\mathbb{X}_2$. The Krull-Schmidt property of $\mathrm{H}(\mathrm{mod}\mbox{-}\Lambda)$ implies the existence of $g\in G$ such that $\mathbb{X}_1\approx {}^g\mathbb{X}_2$, as desired. 
\end{proof}

Since the functors $\mathrm{S}F_\lambda$ and $\mathrm{S}F^\lambda$ are restrictions of the functors $\mathrm{H}F_\lambda$ and $\mathrm{H}F^\lambda$ over the monomorphism categories, they also satisfy similar properties as described in the above proposition. 

Below we show that the functors $\mathrm{H}F_\lambda$ and $\mathrm{H}F^\lambda$ (resp. $\mathrm{S}F_\lambda$ and $\mathrm{S}F^\lambda$) are adjoint to each other.

\begin{lemma}\label{HADJ}
Let $\mathbb{X}$ be in $\mathrm{H}(\mathrm{mod}\mbox{-}\Lambda)$ and $\mathbb{M}$ in $\mathrm{H}(\mathrm{mod}\mbox{-}\bar{\Lambda})$. Then there exists the following natural isomorphisms of $\mathcal{K}$-vector spaces $\zeta_{\mathbb{X},\mathbb{M}}: \mathrm{H}(\mathrm{mod}\mbox{-}\bar{\Lambda})(\mathrm{H}F_\lambda(\mathbb{X}), \mathbb{M})\to \mathrm{H}(\mathrm{mod}\mbox{-}\Lambda)(\mathbb{X}, \mathrm{H}F^\lambda(\mathbb{M}))$.
In particular, similar isomorphism occurs for the functors $\mathrm{S}F_\lambda$ and $\mathrm{S}F^\lambda$ when $\mathbb{X} \in \mathrm{S} (\mathrm{mod}\mbox{-}\Lambda)$ and $\mathbb{M}\in \mathrm{S} (\mathrm{mod}\mbox{-}\bar{\Lambda})$.    
\end{lemma}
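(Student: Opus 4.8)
The plan is to lift the module–level adjunction isomorphism $\eta$ (Proposition \ref{gcad} together with the subsequent Remark) to the arrow categories by applying it in each of the two components, and then to verify the two points that are not purely formal: that the componentwise image of a morphism $\mathrm{H}F_\lambda(\mathbb{X})\to\mathbb{M}$ again satisfies the defining commutative square of $\mathrm{H}(\mathrm{mod}\mbox{-}\Lambda)$, and that the resulting assignment is natural in $\mathbb{X}$ and in $\mathbb{M}$. Write $\mathbb{X}=(X\xrightarrow{f}Y)$ and $\mathbb{M}=(M\xrightarrow{g}N)$, so that $\mathrm{H}F_\lambda(\mathbb{X})=(F_\lambda X\xrightarrow{F_\lambda f}F_\lambda Y)$ and $\mathrm{H}F^\lambda(\mathbb{M})=(F^\lambda M\xrightarrow{F^\lambda g}F^\lambda N)$. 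A morphism $\mathrm{H}F_\lambda(\mathbb{X})\to\mathbb{M}$ is a pair $(a,b)$ with $a\colon F_\lambda X\to M$, $b\colon F_\lambda Y\to N$ and $b\circ F_\lambda f=g\circ a$; I define $\zeta_{\mathbb{X},\mathbb{M}}(a,b):=(\eta_{X,M}(a),\,\eta_{Y,N}(b))$.

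To see that $\zeta_{\mathbb{X},\mathbb{M}}(a,b)$ is a morphism $\mathbb{X}\to\mathrm{H}F^\lambda(\mathbb{M})$ I invoke the naturality of $\eta$ in each variable separately: naturality in the first (contravariant) variable along $f\colon X\to Y$ gives $\eta_{Y,N}(b)\circ f=\eta_{X,N}(b\circ F_\lambda f)$, and naturality in the second (covariant) variable along $g\colon M\to N$ gives $\eta_{X,N}(g\circ a)=F^\lambda g\circ\eta_{X,M}(a)$. Combining these with $b\circ F_\lambda f=g\circ a$ yields
\[
\eta_{Y,N}(b)\circ f=\eta_{X,N}(b\circ F_\lambda f)=\eta_{X,N}(g\circ a)=F^\lambda g\circ\eta_{X,M}(a),
\]
which is exactly the square required for a morphism in $\mathrm{H}(\mathrm{mod}\mbox{-}\Lambda)$. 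The inverse candidate is $(c,d)\mapsto(\eta^{-1}_{X,M}(c),\eta^{-1}_{Y,N}(d))$, and the symmetric naturality computation shows it lands in $\mathrm{H}(\mathrm{mod}\mbox{-}\bar\Lambda)(\mathrm{H}F_\lambda\mathbb{X},\mathbb{M})$; since $\eta$ and $\eta^{-1}$ are mutually inverse componentwise, so are these two maps, and $K$-linearity of $\eta$ makes $\zeta_{\mathbb{X},\mathbb{M}}$ an isomorphism of vector spaces. Naturality of $\zeta$ in $\mathbb{X}$ and in $\mathbb{M}$ then follows by a diagram chase performed in each component, using that composition in $\mathrm{H}(-)$ and the functors $\mathrm{H}F_\lambda$, $\mathrm{H}F^\lambda$ are all defined componentwise, together with the naturality of $\eta$ already used. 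For the monomorphism version: $\mathrm{S}(\mathrm{mod}\mbox{-}\Lambda)$ and $\mathrm{S}(\mathrm{mod}\mbox{-}\bar\Lambda)$ are full subcategories of $\mathrm{H}(\mathrm{mod}\mbox{-}\Lambda)$ and $\mathrm{H}(\mathrm{mod}\mbox{-}\bar\Lambda)$, and since $F_\lambda$ and $F^\lambda$ are exact they preserve monomorphisms, so $\mathrm{S}F_\lambda$, $\mathrm{S}F^\lambda$ are honest restrictions of $\mathrm{H}F_\lambda$, $\mathrm{H}F^\lambda$; as Hom-spaces in a full subcategory coincide with those in the ambient category, the same $\zeta$ restricts to the asserted isomorphism.

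I expect the only genuinely delicate point to be the bookkeeping around the naturality of $\eta$: the Remark records only that each $\eta_{X,M}$ is ``natural'', so one first has to make explicit that this is naturality in both arguments at once, i.e.\ that $\eta$ is a natural isomorphism of bifunctors $\mathrm{Hom}_{\bar\Lambda}(F_\lambda(-),-)\Rightarrow\mathrm{Hom}_\Lambda(-,F^\lambda(-))$ on $(\mathrm{mod}\mbox{-}\Lambda)^{\mathrm{op}}\times\mathrm{mod}\mbox{-}\bar\Lambda$ --- which is automatic for an adjunction but must be in hand before the square-chasing step. Once that is spelled out, everything above is a formal consequence, and no case analysis on the stabilizers $G_{\mathbb{X}}$, $G_{\mathbb{M}}$ is needed at this stage.
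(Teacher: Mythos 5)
Your proposal is correct and follows essentially the same route as the paper: both define $\zeta_{\mathbb{X},\mathbb{M}}$ by applying the adjunction isomorphism $\eta$ of the pair $(F_\lambda,F^\lambda)$ from Proposition \ref{gcad} componentwise, check that the image satisfies the defining commutative square, and exhibit the componentwise inverse via $\eta^{-1}$. In fact you spell out explicitly (via bi-naturality of $\eta$) the square-chasing step that the paper dismisses as ``a direct computation,'' so nothing is missing.
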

\begin{proof}
%Introduce semi-adjoint functor of $F_\lambda$ if it is not an adjoint using direct sum in from of each expression over $G$ but try to verify using definition of adjoint, GSC theorem and different cases of semi-dense property??.
Assume $\mathbb{X}= (X\xrightarrow{f} Y)$ and $\mathbb{M}= (M \xrightarrow{h} N)$. Based on the adjoint isomorphism of the adjoint pair $(F_\lambda, F^\lambda)$, introduced in Proposition \ref{gcad}, we define the following $K$-linear map $$\zeta_{\mathbb{X},\mathbb{M}}: \mathrm{H}(\mathrm{mod}\mbox{-}\bar{\Lambda})(\mathrm{H}F_\lambda(\mathbb{X}), \mathbb{M})\to \mathrm{H}(\mathrm{mod}\mbox{-}\Lambda)(\mathbb{X}, \mathrm{H}F^\lambda(\mathbb{M}))$$
by sending $\zeta= (\zeta_1, \zeta_2)$ in $\mathrm{H}(\mathrm{mod}\mbox{-}\bar{\Lambda})(\mathrm{H}F_\lambda(\mathbb{X}), \mathbb{M})$ to
$\eta = (\eta_{\mathbb{X},\mathbb{M}}(\zeta_1), \eta_{\mathbb{Y},\mathbb{N}}(\zeta_2))$ in $\mathrm{H}(\mathrm{mod}\mbox{-}\Lambda)(\mathbb{X}, \mathrm{H}F^\lambda(\mathbb{M}))$. By a direct computation and following the definitions of $\eta_{-,-}, F^\lambda$ and $F_\lambda$, one can show that $\eta$ is a morphism in $\mathrm{H}(\mathrm{mod}\mbox{-}\bar{\Lambda})$. Similarly, one can define the $K$-linear map $$\zeta'_{\mathbb{X},\mathbb{M}}: \mathrm{H}(\mathrm{mod}\mbox{-}\Lambda)(\mathbb{X}, \mathrm{H}F^\lambda(\mathbb{M}))\to \mathrm{H}(\mathrm{mod}\mbox{-}\bar{\Lambda})(\mathrm{H}F_\lambda(\mathbb{X}), \mathbb{M})$$ by sending $\eta = (\eta_1, \eta_2)$ in $\mathrm{H}(\mathrm{mod}\mbox{-}\Lambda)(\mathbb{X}, \mathrm{H}F^\lambda(\mathbb{M}))$ to $\zeta= (\eta^{-1}_{\mathbb{X},\mathbb{M}}(\eta_1), \eta^{-1}_{\mathbb{Y},\mathbb{N}}(\eta_1))$ in $\mathrm{H}(\mathrm{mod}\mbox{-}\bar{\Lambda})(\mathrm{H}F_\lambda(\mathbb{X}), \mathbb{M})$. It is easy to see that $\zeta_{\mathbb{X},\mathbb{M}}$ and $\zeta'_{\mathbb{X},\mathbb{M}}$ are mutually inverse. Hence $\zeta_{\mathbb{X},\mathbb{M}}$ is an isomorphism.
\end{proof}

The following theorem shows the existence of the Galois semi-covering between the homomorphism categories of an algebra and its skew group algebra.

\begin{theorem}\label{HGCM}
Assume that a group $G$ acts on an algebra $\Lambda$ and $\bar\Lambda$ is the associated skew group algebra. Then for any $f, h\in \mathrm{H}(\mathrm{mod}\mbox{-}\Lambda)$, the functor $\mathrm{H}F_\lambda: \mathrm{H}(\mathrm{mod}\mbox{-}\Lambda) \to \mathrm{H}(\mathrm{mod}\mbox{-}\bar\Lambda)$ induces the following isomorphisms of vector spaces:
$$\mathrm{H}(\mathrm{mod}\mbox{-}\bar\Lambda)(\mathrm{H}F_\lambda f,\mathrm{H}F_\lambda h)\approx \begin{cases}\bigoplus_{g\in G} \mathrm{H}(\mathrm{mod}\mbox{-}\Lambda)({}^gf, h)&\mbox{ if } G_{f}\neq G;\\\bigoplus_{g\in G} \mathrm{H}(\mathrm{mod}\mbox{-}\Lambda)(f, {}^gh)&\mbox{ if }G_{h}\neq G;\\\mathrm{H}(\mathrm{mod}\mbox{-}\Lambda)^{|G|}(f, h)&\mbox{ if }G_{fh}= G.\end{cases}$$  
\end{theorem}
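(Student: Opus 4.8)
The plan is to avoid re-running the module-level computation of Theorem \ref{GCM} inside the morphism category, and instead to extract the entire statement from the adjunction and the $G$-stability already established for the induced functors. Concretely, everything should fall out of Lemma \ref{HADJ} combined with Proposition \ref{GSTAB}(2); all the substantive work has been front-loaded into Proposition \ref{gcad}, Proposition \ref{GSTAB} and Lemma \ref{HADJ}, so what remains is bookkeeping.

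First I would specialize Lemma \ref{HADJ} to $\mathbb{X}=f$ and $\mathbb{M}=\mathrm{H}F_\lambda h$, giving a natural isomorphism of $K$-vector spaces
$$\mathrm{H}(\mathrm{mod}\mbox{-}\bar\Lambda)(\mathrm{H}F_\lambda f,\mathrm{H}F_\lambda h)\ \approx\ \mathrm{H}(\mathrm{mod}\mbox{-}\Lambda)(f,\mathrm{H}F^\lambda\mathrm{H}F_\lambda h).$$
Next I would substitute $\mathrm{H}F^\lambda\mathrm{H}F_\lambda h\approx\bigoplus_{g\in G}{}^gh$ from Proposition \ref{GSTAB}(2) and distribute the additive functor $\mathrm{H}(\mathrm{mod}\mbox{-}\Lambda)(f,-)$ over the finite direct sum, reaching
$$\mathrm{H}(\mathrm{mod}\mbox{-}\bar\Lambda)(\mathrm{H}F_\lambda f,\mathrm{H}F_\lambda h)\ \approx\ \bigoplus_{g\in G}\mathrm{H}(\mathrm{mod}\mbox{-}\Lambda)(f,{}^gh).$$
It then remains to rewrite this one formula in the three advertised forms. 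If $G_h\neq G$, nothing is to be done. If $G_{fh}=G$, then $G_f=G_h=G$, hence ${}^gh\approx h$ for every $g\in G$ and the right-hand side collapses to $\mathrm{H}(\mathrm{mod}\mbox{-}\Lambda)^{|G|}(f,h)$. If $G_f\neq G$, I would invoke that $G$ acts on $\mathrm{H}(\mathrm{mod}\mbox{-}\Lambda)$ by automorphisms — the morphism-category analogue of the isomorphism $\Lambda(M,N)\approx\Lambda({}^gM,{}^gN)$ recalled in Section \ref{SGCM} — so that $\mathrm{H}(\mathrm{mod}\mbox{-}\Lambda)(f,{}^gh)\approx\mathrm{H}(\mathrm{mod}\mbox{-}\Lambda)({}^{g^{-1}}f,h)$, and then reindex $g\mapsto g^{-1}$ to obtain $\bigoplus_{g\in G}\mathrm{H}(\mathrm{mod}\mbox{-}\Lambda)({}^gf,h)$. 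A one-line remark that the three hypotheses are exhaustive (if $G_{fh}\neq G$ then $G_f\neq G$ or $G_h\neq G$) and mutually consistent on overlaps finishes the argument; the statement for $\mathrm{S}F_\lambda$ follows verbatim because $\mathrm{S}F_\lambda,\mathrm{S}F^\lambda$ are the restrictions of $\mathrm{H}F_\lambda,\mathrm{H}F^\lambda$ and Lemma \ref{HADJ} and Proposition \ref{GSTAB} were recorded for them too.

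I do not expect a genuine obstacle. The only points needing care are that the isomorphism obtained by chaining Lemma \ref{HADJ} with Proposition \ref{GSTAB}(2) is indeed of $K$-vector spaces (both ingredients are) and that the reindexing in the case $G_f\neq G$ really rests on the automorphism property of the $G$-action on $\mathrm{H}(\mathrm{mod}\mbox{-}\Lambda)$ and nothing about $\bar\Lambda$. If one insisted on a self-contained proof not routed through the adjunction, the alternative would be to apply Theorem \ref{GCM} separately to $\mathrm{Hom}_{\bar\Lambda}(F_\lambda X,F_\lambda X')$ and $\mathrm{Hom}_{\bar\Lambda}(F_\lambda Y,F_\lambda Y')$ for $\mathbb{X}=(X\to Y)$ and $\mathbb{X}'=(X'\to Y')$, and then cut out the subspace determined by commutativity with $F_\lambda(f)$ and $F_\lambda(h)$; that path is available but messier, since it forces one to keep track of how the stabilizer of the object $\mathbb{X}$ sits inside $G_X\cap G_Y$, and the adjunction route bypasses this entirely.
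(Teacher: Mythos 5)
Your argument is correct, and it is not the route the paper takes. The paper proves Theorem \ref{HGCM} by a four-case analysis ($G_f\neq G,G_h=G$; $G_h\neq G,G_f=G$; $G_f,G_h\neq G$; $G_{fh}=G$), using Proposition \ref{3.7} to describe $\mathrm{H}F_\lambda f$ and $\mathrm{H}F_\lambda h$ explicitly and then writing down mutually inverse linear maps $\mathrm{H}\nu_{f,h}$, $\mathrm{H}\mu_{f,h}$, with dimension-counting arguments in the last two cases. You instead derive the single uniform formula $\mathrm{H}(\mathrm{mod}\mbox{-}\bar\Lambda)(\mathrm{H}F_\lambda f,\mathrm{H}F_\lambda h)\approx\bigoplus_{g\in G}\mathrm{H}(\mathrm{mod}\mbox{-}\Lambda)(f,{}^gh)$ from Lemma \ref{HADJ} applied with $\mathbb{M}=\mathrm{H}F_\lambda h$ together with Proposition \ref{GSTAB}(2), and then obtain all three displayed cases by reindexing through the $G$-action on $\mathrm{H}(\mathrm{mod}\mbox{-}\Lambda)$ and by collapsing ${}^gh\approx h$ when $G_h=G$; this is logically sound and non-circular, since Lemma \ref{HADJ} and Proposition \ref{GSTAB} are established before the theorem and rest only on Proposition \ref{gcad}, Remark \ref{indsk} and the module-level Theorem \ref{GCM}. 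What your route buys is brevity and uniformity (you in fact prove a single statement valid for all $f,h$, of which the three cases are specializations), and it avoids the somewhat delicate dimension comparisons of the paper's Cases III--IV. What the paper's route buys is an explicit description of the isomorphism (gluing data and the maps $F_i\mapsto\bar F_i$), which makes the phrase ``induced by $\mathrm{H}F_\lambda$'' literal and is convenient when the isomorphism is transported through the Yoneda functor in Theorem \ref{GCF}; in your version the isomorphism is the adjunction composite, so if you want the same explicit form you should note that under the canonical unit/counit and the identification $\mathrm{H}F_\lambda({}^gh)\approx\mathrm{H}F_\lambda(h)$ your map agrees with applying $\mathrm{H}F_\lambda$ followed by these identifications. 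Your closing remarks (exhaustiveness and consistency of the three hypotheses, and the restriction to $\mathrm{S}F_\lambda$) are fine.
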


\begin{proof}
Here, we analyze the first two cases, taking into account three different situations where $G_{f}\neq G, G_{h}= G$; $G_{h}\neq G, G_{f}= G$ and $G_{f}\neq G, G_{h}\neq G$. Now, let us describe the isomorphism $\nu_{f,h}$ explicitly as follows:

\noindent{\textbf{Case-I:}} Here, $G_{f}\neq G, G_{h}= G$. Thus $F_\lambda ({}^gf)=\bar {f}=\begin{cases}\uwithtext{$\bar{N}$}_{\hat{g}\in \hat{G}} \bar{f}_{{}^{\hat{g}}\bar{M} \bar{N}} &\mbox{ if } G_{M}= G, G_{N}\neq G;\\\uwithtext{$\bar{M}$}_{\hat{g}\in \hat{G}} \bar{f}_{\bar{M} {}^{\hat{g}}\bar{N}} &\mbox{ if }G_{M}\neq G, G_{N}= G;\\\bar{f}_{\bar{M}\bar{N}}&\mbox{ if }G_{M}, G_{N}\neq G.\end{cases}$ and $F_\lambda ({}^g h)=\bigoplus_{\hat{g}\in \hat{G}} \bar h_{{}^{\hat{g}}\bar{M}{}^{\hat{g}}\bar{N}}$ for all $g\in G$ and $\bar f, \bar h \in \mathrm{H}(\mathrm{mod}\mbox{-}\bar\Lambda)$ by Proposition \ref{3.7}. We define $\mathrm{H}\nu_{f, h}: \bigoplus_{g\in G} \mathrm{H}(\mathrm{mod}\mbox{-}\Lambda)({}^gf, h) \to \mathrm{H}(\mathrm{mod}\mbox{-}\bar\Lambda)(\mathrm{H}F_\lambda f,\mathrm{H}F_\lambda h)$ by $\mathrm{H}\nu_{M,N}(F_1,\hdots, F_n)_{1\times n}=(\bar{F_1},\hdots, \bar{F_n})_{n\times 1}$, and the inverse isomorphism $\mathrm{H}\mu_{M, N}$ by $\mathrm{H}\mu_{f, h}(\bar{F_1},\hdots, \bar{F_n})_{n\times 1}=(F_1,\hdots, F_n)_{1\times n}$, where, $F_i=F_{{}^{g_i}fh}: {}^{g_i} f\to h$ and $\bar{F_i}=\bar{F}_{\bar{f}{}^{\hat{g}_i}\bar{h}}: \bar{f}\to {}^{\hat{g}_i}\bar{h}$ are morphisms in $\mathrm{H}(\mathrm{mod}\mbox{-}\Lambda)$ and $\mathrm{H}(\mathrm{mod}\mbox{-}\bar\Lambda)$ respectively. Clearly, $\mathrm{H}\nu_{f,h}$ and $\mathrm{H}\mu_{f,h}$ are linear morphisms.
\vspace{0.1in}
\begin{figure}[h]
\centering
\begin{tikzcd} [sep={2.7em,between origins}]
{}^{g_1}f \arrow[rrdd, "F_1"]                             &  &   &                             &                              &                                                                                                                     &  & {}^{\hat{g}_1} \bar{h}                             \\
{}^{g_2}f \arrow[rrd, "F_2"'] \arrow[dd, no head, dotted] &  &   & {}                          & {} \arrow[l, "{\mathrm{H}\mu_{f,h}}"'] &                                                                                                                     &  & {}^{\hat{g}_2} \bar{h} \arrow[dd, no head, dotted] \\
                                                          &  & h &                             &                              & \bar{f} \arrow[rruu, "\bar{F}_1"] \arrow[rru, "\bar{F}_2"'] \arrow[rrdd, "\bar{F}_n"'] \arrow[rrd, "\bar{F}_{n-1}"] &  &                                                    \\
{}^{g_{n-1}}f \arrow[rru, "F_{n-1}"]                      &  &   & {} \arrow[r, "{\mathrm{H}\nu_{f,h}}"] & {}                           &                                                                                                                     &  & {}^{\hat{g}_{n-1}} \bar{h}                         \\
{}^{g_n}f \arrow[rruu, "F_n"']                            &  &   &                             &                              &                                                                                                                     &  & {}^{\hat{g}_n} \bar{h}                            
\end{tikzcd}
\end{figure}

\noindent{\textbf{Case-II:}} Here, $G_{h}\neq G, G_{f}= G$. Thus $F_\lambda ({}^gh)=\bar h=\begin{cases}\uwithtext{$\bar{N}$}_{\hat{g}\in \hat{G}} \bar{f}_{{}^{\hat{g}}\bar{M} \bar{N}} &\mbox{ if } G_{M}= G, G_{N}\neq G;\\\uwithtext{$\bar{M}$}_{\hat{g}\in \hat{G}} \bar{f}_{\bar{M} {}^{\hat{g}}\bar{N}} &\mbox{ if }G_{M}\neq G, G_{N}= G;\\\bar{f}_{\bar{M}\bar{N}}&\mbox{ if }G_{M}, G_{N}\neq G.\end{cases}$ and $F_\lambda (f)=\bigoplus_{\hat{g}\in \hat{G}} {}^{\hat{g}} \bar f$ for all $g\in G$ and $\bar f, \bar h \in \mathrm{H}(\mathrm{mod}\mbox{-}\bar\Lambda)$ by Proposition \ref{3.7}. We define $\mathrm{H}\nu_{f, h}: \bigoplus_{g\in G} \mathrm{H}(\mathrm{mod}\mbox{-}\Lambda)(f, {}^gh) \to \mathrm{H}(\mathrm{mod}\mbox{-}\bar\Lambda)(\mathrm{H}F_\lambda f,\mathrm{H}F_\lambda h)$ by $\mathrm{H}\nu_{f,h}(F_1,\hdots, F_n)_{1\times n}=(\bar{F_1},\hdots, \bar{F_n})_{n\times 1}$, and the inverse isomorphism $\mathrm{H}\mu_{M, N}$ by $\mathrm{H}\mu_{f, h}(\bar{F_1},\hdots, \bar{F_n})_{n\times 1}=(F_1,\hdots, F_n)_{1\times n}$, where, $F_i=F_{f{}^{g_i}h}: f\to {}^{g_i}h$ and $\bar{F_i}=\bar{F}_{{}^{\hat{g}_i}\bar{f}\bar{h}}: {}^{\hat{g}_i}\bar{f}\to \bar{h}$ are morphisms in $\mathrm{H}(\mathrm{mod}\mbox{-}\Lambda)$ and $\mathrm{H}(\mathrm{mod}\mbox{-}\bar\Lambda)$ respectively. Clearly, $\mathrm{H}\nu_{M,N}$ and $\mathrm{H}\mu_{M,N}$ are linear morphisms.

\[\begin{tikzcd}[sep={2.7em,between origins}]
                                                                                      &  & {}^{g_1}h                             &                              &                              & {}^{\hat{g}_1}\bar{f} \arrow[rrdd, "\bar{F}_1"]                             &  &         \\
                                                                                      &  & {}^{g_2}h \arrow[dd, no head, dotted] & {}                           & {} \arrow[l, "{\mu_{f,h}}"'] & {}^{\hat{g}_2}\bar{f} \arrow[rrd, "\bar{F}_2"'] \arrow[dd, no head, dotted] &  &         \\
f \arrow[rruu, "F_1"] \arrow[rru, "F_2"'] \arrow[rrdd, "F_n"'] \arrow[rrd, "F_{n-1}"] &  &                                       &                              &                              &                                                                             &  & \bar{h} \\
                                                                                      &  & {}^{g_{n-1}}h                         & {} \arrow[r, "{\nu_{f,h}}"'] & {}                           & {}^{\hat{g}_{n-1}}\bar{f} \arrow[rru, "\bar{F}_{n-1}"]                      &  &         \\
                                                                                      &  & {}^{g_n}h                             &                              &                              & {}^{\hat{g}_n}\bar{f} \arrow[rruu, "\bar{F}_n"']                            &  &        
\end{tikzcd}\]

\noindent{\textbf{Case-III:}} Here, $G_f\neq G, G_h\neq G$. Then it works as a locally Galois covering as $G$-action does not fix $f$ and $h$ both, and as a result, the number of arrows from ${}^gf$ to $h$ for $g\in G$ is equal to the number of arrows from $\bar{f}$ to $\bar{h}$ (= the number of arrows from $f$ to ${}^gh$ also) i.e. $$\mathrm{dim}_K\mathrm{H}(\mathrm{mod}\mbox{-}\bar\Lambda)(\bar{f}, \bar{h})= \mathrm{dim}_{K}\mathrm{H}(\mathrm{mod}\mbox{-}\Lambda)(\bigoplus_{g\in G}{}^gf, h)=\mathrm{dim}_{K}\bigoplus_{g\in G}\mathrm{H}(\mathrm{mod}\mbox{-}\Lambda)({}^gf, h).$$

\[
\begin{tikzcd}[sep={2.85em,between origins}]
{}^{g_1}f \arrow[rrdd, "F_1"]                             &  &                              & {} \arrow[rr, "\bar{F}_1"]                           &                                & {}                          &                                                                                       &  & {}^{g_1}h                             \\
{}^{g_2}f \arrow[rrd, "F_2"'] \arrow[dd, no head, dotted] &  & {}                           & {} \arrow[l, "{\mu_{f,h}}"'] \arrow[rr, "\bar{F}_2"] & {} \arrow[dd, no head, dotted] & {} \arrow[r, "{\mu_{f,h}}"] & {}                                                                                    &  & {}^{g_2}h \arrow[dd, no head, dotted] \\
                                                          &  & h                            & \bar{f}                                              &                                & \bar{h}                     & f \arrow[rruu, "F^1"] \arrow[rru, "F^2"'] \arrow[rrd, "F^{n-1}"] \arrow[rrdd, "F^n"'] &  &                                       \\
{}^{g_{n-1}}f \arrow[rru, "F_{n-1}"]                      &  & {} \arrow[r, "{\nu_{f,h}}"'] & {} \arrow[rr, "\bar{F}_{n-1}"]                       & {}                             & {}                          & {} \arrow[l, "{\nu_{f,h}}"]                                                           &  & {}^{g_{n-1}}h                         \\
{}^{g_n}f \arrow[rruu, "F_n"']                            &  &                              & {} \arrow[rr, "\bar{F}_n"]                           &                                & {}                          &                                                                                       &  & {}^{g_n}h                            
\end{tikzcd}
\]

The second equality holds as it is a finite-dimensional vector space. Thus we get $$\mathrm{H}(\mathrm{mod}\mbox{-}\bar\Lambda)(\bar{f}, \bar{h})\approx\bigoplus_{g\in G}\mathrm{H}(\mathrm{mod}\mbox{-}\Lambda)({}^gf, h)\approx\bigoplus_{g\in G}\mathrm{H}(\mathrm{mod}\mbox{-}\Lambda)(f, {}^gh).$$ 

Let $m\leq n$ be the number of the non-zero representatives $f_i$ from the equivalence classes $\mathcal{O}_{f_i}$ under the $G$-action. Then the isomorphism $\mathrm{H}\nu_{f, h}: \bigoplus_{g\in G} \mathrm{H}(\mathrm{mod}\mbox{-}\Lambda)({}^gf,h) \to \mathrm{H}(\mathrm{mod}\mbox{-}\bar\Lambda)(\bar{f}, \bar{h})$ is given by $\mathrm{H}\nu_{f,h}(F_1,\hdots, F_m)_{1\times m}=(\bar{F_1},\hdots, \bar{F_m})$ and the inverse $\mathrm{H}\mu_{f, h}$ by $\mu_{f, h}(\bar{F_1},\hdots, \bar{F_m})=(F_1,\hdots, F_m)_{1\times m}$ where, $F_i=F_{{}^{g_i}fh}: {}^{g_i} f\to h$ and $\bar{F_i}=\bar{F}_{\bar{f}\bar{h}}: \bar{f}\to \bar{h}$ are morphisms in $\mathrm{H}(\mathrm{mod}\mbox{-}\Lambda)$ and $\mathrm{H}(\mathrm{mod}\mbox{-}\bar\Lambda)$ respectively. Moreover, the second isomorphism in the above expression is defined similarly.

Note that, we have the following equality: $$\mathrm{dim}_K\mathrm{H}^{|\tilde{G}|}(\mathrm{mod}\mbox{-}\bar\Lambda)(\bar{f}, \bar{h})= \mathrm{dim}_K\mathrm{H}(\mathrm{mod}\mbox{-}\Lambda)(\bigoplus_{g\in G}{}^gf, \bigoplus_{g\in G}{}^gh).$$

\noindent{\textbf{Case-IV:}} Here, $G_{fh}= G$. Hence, $F_\lambda (f)=\bigoplus_{\hat{g}\in \hat{G}} {}^{\hat{g}} \bar f$ and $F_\lambda (h)=\bigoplus_{\hat{g}\in \hat{G}} {}^{\hat{g}} \bar h$ for some $\bar f, \bar h \in \mathrm{H}(\mathrm{mod}\mbox{-}\bar\Lambda)$ by Proposition \ref{3.7}. From the above discussion, it is clear that, $$\mathrm{dim}_K\mathrm{H}^{|G|}(\mathrm{mod}\mbox{-}\Lambda)(f, h)= \mathrm{dim}_K \mathrm{H}(\mathrm{mod}\mbox{-}\bar{\Lambda})(\bigoplus_{\hat{g}\in \hat{G}}{}^{\hat{g}}\bar{f}, \bigoplus_{\hat{g}\in \hat{G}}{}^{\hat{g}}\bar{h})= \mathrm{dim}_K\mathrm{H}(\mathrm{mod}\mbox{-}\bar{\Lambda})(F_\lambda (f), F_\lambda (h)).$$ 

Therefore, we have, $\mathrm{H}(\mathrm{mod}\mbox{-}\bar\Lambda)(F_\lambda (f), F_\lambda (h))=\mathrm{H}^{|G|}(\mathrm{mod}\mbox{-}\Lambda)(f, h)$, since it is a finite-dimensional vector space. Here, the explicit isomorphism $\mathrm{H}\nu_{f,h}: \mathrm{H}^{|G|}(\Lambda)(f, h)\to \mathrm{H}(\bar{\Lambda})(F_\lambda (f), F_\lambda (h))$ is given by sending $F_i\mapsto \bigoplus_{g\in G} {}^g\bar{F_i}$. One can easily verify that it is a monomorphism; hence, the isomorphism follows, since both vector spaces have the same dimension.
\end{proof}

We conclude this section with an example to illustrate the concept of semi-covering.
\begin{example}\label{GCMOR}
Consider the algebra $\Lambda$ and its skew algebra $\bar{\Lambda}$ in example \cite[Example~3.10]{GoSaTr25}.

\noindent{\textbf{Case-I:}} Assume that $G_{f}\neq G, G_{h}= G$. Here, we analyze the first two cases, taking into account three different situations where $G_{f_s}\neq G, G_{f_t}\neq G$; $G_{f_s}\neq G, G_{f_t}= G$ and $G_{f_s}= G, G_{f_t}\neq G$. 

\underline{$G_{f_s}\neq G, G_{f_t}\neq G$}: Consider $f:\nd{3\ 1\\2\ 4} \to \nd{2\\ 3\ 1\\2\ 4}$ and $h:\nd{3\ 1\ 5\\2\ 4} \to \nd{2\ 4\\ 3\ 1\ 5\\2\ 4}$. Here, ${}^gf:\nd{1\ 5\\2\ 4} \to \nd{4\\ 1\ 5\\2\ 4}$ and ${}^gh=h$. Note that, $\mathrm{H}F_\lambda f:\nd{1\ 3\ 4\\2\ 2} \to \nd{2\\ 1\ 3\ 4\\2\ 2}$ and $\mathrm{H}F_\lambda h:=\bigoplus_{\hat{g}\in \mathbb{Z}_2}{}^{\hat{g}}\bar{h}$ where, $\bar{h}:\nd{1\ 4\\2} \to \nd{2\\ 1\ 4\\2}$ and ${}^{\hat{g}}\bar{h}:\nd{1\ 3\\2} \to \nd{2\\ 1\ 3\\2}$. Clearly, $\mathrm{dim}_K \mathrm{H}(\mathrm{mod}\mbox{-}\bar\Lambda)(\mathrm{H}F_\lambda f,\mathrm{H}F_\lambda h)=\mathrm{dim}_K\bigoplus_{g\in G} \mathrm{H}(\mathrm{mod}\mbox{-}\Lambda)(gf, h)=2$.

\underline{$G_{f_s}\neq G, G_{f_t}= G$}: Consider $f:\nd{2} \to \nd{1\ 4\\2}$ and $h:\nd{1\\2} \to \nd{2\\ 1\ 1\ 3\ 4\\ 2\ 2}$. Here, ${}^gf:\nd{ 2} \to \nd{ 1\ 3\\2}$ and ${}^gh=h$. Note that, $\mathrm{H}F_\lambda f=\uwithtext{$\bar{N}$}_{\hat{g}\in \mathbb{Z}_2}{}^{\hat{g}}\bar{f}:\nd{2}\bigoplus \nd{4} \to \nd{3\ 1\ 5\\ 2\ 4}$ where, $\bar{f}:\nd{2} \to \nd{3\ 1\ 5\\ 2\ 4}$, ${}^{\hat{g}}\bar{f}:\nd{4} \to \nd{3\ 1\ 5\\ 2\ 4}$ and $\bar{N}=\nd{3\ 1\ 5\\ 2\ 4}$, whereas $\mathrm{H}F_\lambda h=\bigoplus_{\hat{g}\in \mathbb{Z}_2}{}^{\hat{g}}\bar{h}$ where, $\bar{h}:\nd{3\\ 2} \to \nd{2\\ 3\ 1\ 5\\ 2\ 4}$ and ${}^{\hat{g}}\bar{h}:\nd{5\\ 4} \to \nd{4\\ 3\ 1\ 5\\ 2\ 4}$. Clearly, $\mathrm{dim}_K \mathrm{H}(\mathrm{mod}\mbox{-}\bar\Lambda)(\mathrm{H}F_\lambda f,\mathrm{H}F_\lambda h)=\mathrm{dim}_K\bigoplus_{g\in G} \mathrm{H}(\mathrm{mod}\mbox{-}\Lambda)({}^gf, h)=2$.

\vspace{0.05in}
\noindent{\textbf{Case-2:}} Assume that $G_{f}\neq G, G_{h}\neq G$. Here we consider the case where $G_{f_s}\neq G, G_{f_t}= G$ and $G_{h_s}\neq G, G_{h_t}= G$. Consider $f:\nd{2} \to \nd{1\\2\ 4}$ and $h:\nd{3\\2} \to \nd{3\ 1\ 5\\ 2\ 4}$. Here, ${}^gf:\nd{ 4} \to \nd{ 1\\2\ 4}$ and ${}^gh=\nd{5\\4} \to \nd{3\ 1\ 5\\ 2\ 4}$. Note that, $\mathrm{H}F_\lambda f=\uwithtext{$\bar{N}$}_{\hat{g}\in \mathbb{Z}_2}{}^{\hat{g}}\bar{f}:\nd{2} \to \nd{3\\ 2}\bigoplus \nd{4\\ 2}$ where, $\bar{f}:\nd{2} \to \nd{3\\ 2}$, ${}^{\hat{g}}\bar{f}:\nd{2} \to \nd{4\\ 2}$ and $\bar{N}=2$, whereas $\mathrm{H}F_\lambda h=\uwithtext{$\bar{N}$}_{\hat{g}\in \mathbb{Z}_2}{}^{\hat{g}}\bar{h}:\nd{1\\ 2} \to \nd{1\ 3\\ 2}\bigoplus \nd{1\ 4\\ 2}$ where, $\bar{h}:\nd{1\\ 2} \to \nd{1\ 3\\ 2}$, ${}^{\hat{g}}\bar{h}:\nd{1\\ 2} \to \nd{1\ 4\\ 2}$ and $\bar{N}= \nd{1\\ 2}$. Clearly, $\mathrm{dim}_K \mathrm{H}(\mathrm{mod}\mbox{-}\bar\Lambda)(\mathrm{H}F_\lambda f,\mathrm{H}F_\lambda h)=\mathrm{dim}_K\bigoplus_{g\in G} \mathrm{H}(\mathrm{mod}\mbox{-}\Lambda)({}^gf, h)=\mathrm{dim}_K\bigoplus_{g\in G} \mathrm{H}(\mathrm{mod}\mbox{-}\Lambda)(f, {}^gh)=1$.

\vspace{0.05in}
\noindent{\textbf{Case-3:}} Assume that $G_{f}= G, G_{h}= G$. Consider $f:\nd{2} \to \nd{2\\ 1\\2}$ and $h:\nd{1\\ 3\ 4\\2\ 2} \to \nd{2\\ 1\ 3\ 4\\2\ 2}$. Note that, $\mathrm{H}F_\lambda f:=\bigoplus_{\hat{g}\in \mathbb{Z}_2}{}^{\hat{g}}\bar{f}$  where, $\bar{f}:\nd{2} \to \nd{2\\ 3\\2}$ and ${}^{\hat{g}}\bar{f}:\nd{4} \to \nd{4\\ 5\\4}$, and $\mathrm{H}F_\lambda h:=\bigoplus_{\hat{g}\in \mathbb{Z}_2}{}^{\hat{g}}\bar{h}$ where, $\bar{h}:\nd{3\ 1\\2\ 4} \to \nd{2\\ 3\ 1\\2\ 5}$ and ${}^{\hat{g}}\bar{h}:\nd{1\ 5\\2\ 4} \to \nd{4\\ 1\ 5\\2\ 4}$. Clearly, $\mathrm{dim}_K \mathrm{H}(\mathrm{mod}\mbox{-}\bar\Lambda)(\mathrm{H}F_\lambda f,\mathrm{H}F_\lambda h)=\mathrm{dim}_K\mathrm{H}^{|G|}(\mathrm{mod}\mbox{-}\Lambda)(f, h)=2$.
\end{example}

\vspace{0.05in}
\begin{rmk}
Since the functor $\mathrm{S}F_\lambda$ is a restrictions of the functor $\mathrm{H}F_\lambda$ over the monomorphism category $\mathrm{S}(\mathrm{mod}\mbox{-}\Lambda)$, we can say that $\mathrm{S}F_\lambda: \mathrm{S}(\mathrm{mod}\mbox{-}\Lambda) \to \mathrm{S}(\mathrm{mod}\mbox{-}\bar\Lambda)$ is also a Galois semi-covering.
\end{rmk} 

\section{Galois semi-covering functor over functor categories}
In this section we show that the Galois semi-covering functor $\mathrm{H}F_\lambda$ between the morphism categories over an algebra and its skew group algebra induces a Galois semi-covering functor $\phi$ between their funtor categories. We also verify that the functor $\phi$ is exact and faithful.
%\subsection{Finitely presented functors over skew group algebras}
%A functor is exact iff it sends an exact sequence to an exact sequence. Such a functor preserves finite limits, finite colimits, kernels, images, cokernels and coimages of homomorphisms. The length of a module uniquely measures its composition series.

%\noindent{\textbf{Exact Functor:}} A functor $F: C\to D$ is exact if and only if for any exact sequence $0 \to X \xrightarrow{f} Z \xrightarrow{g} Y \to 0$ in $C$ the sequence $0 \to F(X) \xrightarrow{F(f)} F(Z) \xrightarrow{F(g)} F(Y)\to 0$ is exact in $D$. Equivalently, the functor $F preserves finite limits and finite colimits; see [34, VIII]. Note that if $F: C\to D$ is exact, then $X ' \subseteq X$ yields $F(X ' )\subseteq F(X)$ and $F(X/X ' )= F(X)/F(X ' )$, for any objects $X ' , X$ of $C$. Moreover, $F$ preserves kernels, images, cokernels and coimages of homomorphisms in $C$. WRITE WHEN AN EXACT FUNCTOR IS FAITHFUL?
\vspace{0.1in}
\noindent{\textbf{Serre subcategory:}} A full subcategory $S$ of $C$ is a Serre subcategory if and only if for any exact sequence $0 \to X \to Z \to Y \to 0$ in $S$ we have $Z \in S$ if and only if $X, Y \in S$. Therefore, $S$ is closed under subobjects, quotients and extensions. A Serre subcategory is an abelian subcategory. The kernel of an exact functor is a Serre subcategory of the domain category.

%\noindent{\textbf{Length of a module:}} An object $S$ of $C$ is simple in $C$ if and only if $S\neq 0$ and any subobject of $S$ in $C$ is either $S$ or $0$. An object $T$ of $C$ has finite length in $C$ if and only if there exists a chain of subobjects $0 = T_0 \subseteq T_1 \subseteq \hdots \subseteq T_n= T$ of $T$ such that $T_{i+1}/T_i$ is simple in $C$, for any $i= 0,\hdots, n-1$. We call such a chain a composition series of $T$. It is well-known that if $T$ has finite length, then the number $n$ is unique. We call the number $n$ the length of $T$ and denote it by $l_C(T)$. Recall that if $0 \to X \to Z \to Y \to 0$ is a short exact sequence in $C$, then $l_C(Z)= l_C(X) + l_C(Y)$ and hence $Z$ has finite length if and only if $X,Y$ have finite length. The functor $F$ is finitely generated if and only if there exists an epimorphism of functors $\Lambda(-,N) \to F$, for some $N \in \mathrm{mod}\mbox{-}\Lambda$.
\vspace{0.1in}
\noindent{\textbf{Finitely presented functors:}} We denote by $\mathcal{G}(\Lambda)$ the category of all contravariant $K$-linear functors from $\mathrm{mod}\mbox{-}\Lambda$ to the category $\mathrm{mod}\mbox{-}K$ of finite dimensional $K$-vector spaces. Each $M\in \mathrm{mod}\mbox{-}\Lambda$ associates a contravariant hom-functor $\mathrm{H}_M \in \mathcal{G}(\Lambda)$ defined as $\mathrm{H}_M(X)= \mathrm{Hom}_\Lambda(X, M)$, for any $X \in \mathrm{mod}\mbox{-}\Lambda$, and is denoted by $\mathrm{Hom}_\Lambda(-, M)$. The homomorphism $f\in \mathrm{Hom}_\Lambda(M,N)$ corresponds to $\mathrm{Hom}_\Lambda(-, f): \mathrm{Hom}_\Lambda(-, M) \to \mathrm{Hom}_\Lambda(-,N)$ such that $\mathrm{Hom}_\Lambda(X, f): \mathrm{Hom}_\Lambda(X, M) \to \mathrm{Hom}_\Lambda(X, N)$ is defined by $\mathrm{Hom}_\Lambda(X,f)(g)= fg$, for any $g\in \mathrm{Hom}_\Lambda(X, M)$. By Yoneda lemma, the function $f\to \mathrm{Hom}_\Lambda(-,f)$ defines an isomorphism $\mathrm{Hom}_\Lambda(M, N) \to \mathcal{G}(\Lambda)(\mathrm{Hom}_\Lambda(-, M), \mathrm{Hom}_\Lambda(-, N))$ of vector spaces. Also, this yields $M \approx N$ if and only if $\mathrm{Hom}_\Lambda(-,M) \approx \mathrm{Hom}_\Lambda(-, N)$.

A functor $F \in \mathcal{G}(\Lambda)$ is finitely presented if and only if there exists an exact sequence of functors $\mathrm{Hom}_\Lambda(-, M) \xrightarrow{\mathrm{Hom}_\Lambda(-,f)} \mathrm{Hom}_\Lambda(-, N) \to F \to 0$, for some $M, N \in \mathrm{mod}\mbox{-}\Lambda$ and $\Lambda$-module homomorphism $f: M \to N$ i.e. $F \approx \mathrm{Coker}\mathrm{Hom}_\Lambda(-, f)$ which yields $F(X)$ is isomorphic to the cokernel of the map $\mathrm{Hom}_\Lambda(X,f): \mathrm{Hom}_\Lambda(X,M) \to \mathrm{Hom}_\Lambda(X, N)$. Denote $\mathcal{F}(\Lambda)$ the full subcategory of $\mathcal{G}(\Lambda)$ formed by finitely presented functors. Obviously $\mathrm{Hom}_\Lambda(-, M) \in \mathcal{F}(\Lambda)$. 

%\begin{theorem}
%Suppose $M\in \mathrm{mod}\mbox{-}\Lambda$. The functor $(M\otimes_\Lambda\_): \mathrm{mod}\mbox{-}\Lambda \to \mathrm{Ab}$ is finitely presented if and only if $M$ is finitely presented.    
%\end{theorem}\begin{corollary}Since $\bar\Lambda$ is a finitely presented $\Lambda$-module, we get the Riedtmann functor $F_\lambda$ is also finitely presented.    \end{corollary}

Additive subcategory $\mathrm{Add}(\mathrm{mod}\mbox{-}\Lambda)$ of $\mathrm{mod}\mbox{-}\Lambda$: 

Objects: arbitrary direct sums of objects of $\mathrm{mod}\mbox{-}\Lambda$.

Morphisms: $f:=(f_{ij}): \bigoplus_{j\in J} M_j \to \bigoplus_{i\in I} N_i$ where, $f_{ij}: M_j \to N_i$, for $i\in I, j \in J$. 

For any $j \in J$, we have $f_{ij}\neq 0$ only for a finite number of $i \in I$. Indeed, this follows from the fact that $M_j$ is finite-dimensional for any $j \in J$.

Given $T \in \mathcal{F}(\Lambda)$, define $\hat{T}: \mathrm{Add}(\mathrm{mod}\mbox{-}\Lambda) \to \mathrm{mod}\mbox{-} K$ as follows: 

Set $\hat{T}(\bigoplus_{j\in J} M_j) = \bigoplus_{j\in J} T(M_j)$ and $\hat{T}(f): \bigoplus_{i\in I} T(N_i) \to \bigoplus_{j\in J} T(M_j)$ is defined by $T(f_{ij}): T(N_i) \to T(M_j)$, for $i\in I, j \in J$. Observe that $\hat{T}$ equals $T$ on $\mathrm{mod}\mbox{-}\Lambda$.

%As an application, we get the following proposition:Skewness preserves projectiveness. using adjoint.
\vspace{0.1in}
\noindent{\textbf{The functor $\phi$ between the functor categories over $\Lambda$ and $\bar{\Lambda}$:}}

Consider the functor $\phi: \mathcal{F}(\Lambda) \to \mathcal{F}(\bar\Lambda)$ as follows: 

$\phi(T):= \hat{T}\circ F^\lambda$ for any $T\in \mathcal{F}(\Lambda)$. Clearly, $\phi(T)\in \mathcal{G}(\bar\Lambda)$. 

For the semi-dense property of $F_\lambda$ as described in \cite[Corollary 3.14]{GoSaTr25}, given $\bar{M} \in \mathrm{Ind}\mbox{-}\bar\Lambda$ there exists a unique $M \in \mathrm{Ind}\mbox{-}\Lambda$, thanks to Remark \ref{indsk}, such that the following cases hold: 

\begin{itemize}
\item If $\bar{M}\approx F_\lambda(M)$, then $\phi(T)(\bar{M})=\hat{T}(F^\lambda(F_\lambda(M)))\approx \hat{T}(\bigoplus_{g\in G} {}^gM)= \bigoplus_{g\in G}T({}^gM).$

\item If $\bar{M}\inplus F_\lambda(M)$, then $F^\lambda(\bar{M})=M$. Hence we get $\phi(T)(\bar{M})= T(M).$ 
\end{itemize}

Define $\phi(T)(\alpha)= T(F^\lambda(\alpha))$ for any morphism $\alpha\in \mathrm{mod}\mbox{-}\bar{\Lambda}$. 

%Assume that $\alpha \in \mathrm{Hom}_{\bar\Lambda}(\bar{M}, \bar{N})$, for some $\bar{M},\bar{N}\in \mathrm{mod}\mbox{-}\bar\Lambda$. Consider the following cases: CFK-37\begin{itemize}\item If $G_M\neq $\bar{M}, \bar{N}\in F_\lambda(\mathrm{mod}\mbox{-}\Lambda)$, then $\alpha= \sum_{g\in G} F_\lambda(f_g)$ where $f_g: gM \to N$, for any $g \in G$. It is worth noting that the homomorphism $\phi(T)(\alpha):\bigoplus_{g\in G}T(gN) \to \bigoplus_{g\in G}T(gM)$ of vector spaces is defined by homomorphisms $T(gf_{g^{-1}h}): T(gN)\to T(hM)$, for any $g,h \in G$. Indeed, this follows from the fact that $F_*(\alpha) : \bigoplus_{g\in G} gM\to \bigoplus_{g\in G} gN$ is defined by homomorphisms $gf_{g^{-1}h}: hM\to gN$, for any $g,h\in G$. \item If $X, Y\notin F_\lambda(\mathrm{mod}\mbox{-}\Lambda)$, then $F_*(X)=\bigoplus_{g\in G}gM$ and $F_*(Y)=\bigoplus_{g\in G}gN$ for some $M,N$. It is worth noting that the homomorphism $\phi(T)(\alpha):\bigoplus_{g\in G}T(gN) \to \bigoplus_{g\in G}T(gM)$ of vector spaces is defined as $T(F_*(\alpha))$.WE NEED TO DEFINE $F_*(\alpha)$ AND SAY $\phi(T)(\alpha)=T(F_*(\alpha))$.\end{itemize}

The next remark ensures that $\phi$ sends a homomorphism in $\mathcal{F}(\Lambda)$ to a homomorphism in $\mathcal{F}(\bar\Lambda)$.
\begin{rmk}
Assume that $T_1, T_2\in \mathcal{F}(\Lambda)$ and $\iota: T_1 \to T_2$ is a homomorphism of functors. Then $\phi(\iota): \phi(T_1) \to \phi(T_2)$ is defined as follows: 

Assume $\bar{M}, \bar{N} \in \mathrm{mod}\mbox{-}\bar\Lambda$ and $\bar{f} \in \mathrm{Hom}_{\bar\Lambda}(\bar{M}, \bar{N})$. There are two cases: 
\begin{itemize}
\item[$\bar{M}\inplus F_\lambda(M)$] Here, $\phi(\iota)_{\bar{M}}: T_1(M) \to T_2(M)$ is the homomorphism $\iota_{M}: T_1(M) \to T_2(M)$.
\item[$\bar{M}= F_\lambda(M)$] Set $\phi(\iota)_{\bar{M}}: \bigoplus_{g\in G}T_1({}^gM) \to \bigoplus_{g\in G}T_2({}^gM)$ to be the homomorphism defined by $\iota_{{}^gM}: T_1({}^gM) \to T_2({}^gM)$, for $g\in G$. 
\end{itemize}

Let us consider the following cases:
\begin{enumerate}
\item If $\bar{M}\inplus F_\lambda(M),\bar{N}\inplus F_\lambda(N)$, then $F^\lambda(\bar{f}_{\bar{M}\bar{N}})=f_{MN}$ where $f:M\to N$ is the associated homomorphism in $\mathrm{mod}\mbox{-}\Lambda$. Since, $\iota_MT_1(f)=T_2(f)\iota_N$ holds as $\iota$ is a homomorphism of functors, we get $\phi(\iota)_{\bar{M}}\phi(T_1)(\bar{f})=\phi(T_2)(\bar{f})\phi(\iota)_{\bar{N}}$.
    
\item If $\bar{M}\inplus F_\lambda(M),\bar{N}= F_\lambda(N)$, then $F^\lambda(\bar{f})=\uwithtext{$M$}_{g\in G} f_{M {}^gN}$ where $f_i:M\to {}^{g_i} N$ is the associated homomorphism in $\mathrm{mod}\mbox{-}\Lambda$. Since, $\iota_MT_1(f_i)=T_2(f_i)\iota_{{}^{g_i}N}$ holds as $\iota$ is a homomorphism of functors, we get $\phi(\iota)_{\bar{M}}\phi(T_1)(\bar{f})=\phi(T_2)(\bar{f})\phi(\iota)_{\bar{N}}$.
    
\item If $\bar{M}= F_\lambda(M),\bar{N}\inplus F_\lambda(N)$, then $F^\lambda(\bar{f})=\uwithtext{$N$}_{g\in G} f_{{}^gM N}$ where $f_i:g_i M\to N$ is the associated homomorphism in $\mathrm{mod}\mbox{-}\Lambda$. Since, $\iota_{g_iM}T_1(f_i)=T_2(f_i)\iota_N$ holds as $\iota$ is a homomorphism of functors, we get $\phi(\iota)_{\bar{M}}\phi(T_1)(\bar{f})=\phi(T_2)(\bar{f})\phi(\iota)_{\bar{N}}$.

\item If $\bar{M}=F_\lambda(M), \bar{N}= F_\lambda(N)$, then $F^\lambda(\bar{f})=\bigoplus_{g\in G} f_{{}^gM{}^gN}$ where $g_if:g_iM\to g_iN$ is the associated homomorphism in $\mathrm{mod}\mbox{-}\Lambda$. Since, $\iota_{{}^{g_i}M}T_1({}^{g_i}f)=T_2({}^{g_i}f)\iota_{{}^{g_i}N}$ holds as $\iota$ is a homomorphism of functors, we get $\phi(\iota)_{\bar{M}}\phi(T_1)(\bar{f})=\phi(T_2)(\bar{f})\phi(\iota)_{\bar{N}}$.
\end{enumerate}
This shows that, $\phi(\iota): \phi(T_1)\to \phi(T_2)$ is a homomorphism of functors. 
\end{rmk}

The next result ensures the exactness of $\phi$.

\begin{proposition}\label{exctphi}
$\phi: \mathcal{F}(\Lambda) \to \mathcal{G}(\bar\Lambda)$ is an exact functor.
\end{proposition}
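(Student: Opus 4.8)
The plan is to reduce exactness of $\phi$ to a purely pointwise statement. Since $\mathrm{mod}\mbox{-}K$ is abelian and the kernel and cokernel of a natural transformation between $\mathrm{mod}\mbox{-}K$-valued functors are formed objectwise, a sequence in $\mathcal{G}(\bar\Lambda)$ (or in $\mathcal{G}(\Lambda)$) is exact if and only if its evaluation at every module is exact; moreover $\mathcal{F}(\Lambda)$ is a full abelian subcategory of $\mathcal{G}(\Lambda)$ closed under kernels and cokernels, so a short exact sequence $0\to T_1\xrightarrow{\iota}T_2\xrightarrow{\pi}T_3\to 0$ in $\mathcal{F}(\Lambda)$ is exact at every $X\in\mathrm{mod}\mbox{-}\Lambda$. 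Because $\phi$ is additive and $K$-linear, it suffices to prove that $\phi$ carries such a short exact sequence to a short exact sequence in $\mathcal{G}(\bar\Lambda)$, i.e. that $0\to \phi(T_1)(\bar M)\to \phi(T_2)(\bar M)\to\phi(T_3)(\bar M)\to 0$ is exact for every $\bar M\in\mathrm{mod}\mbox{-}\bar\Lambda$.

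So I would fix $\bar M$, put $X:=F^\lambda(\bar M)\in\mathrm{mod}\mbox{-}\Lambda$, and first identify, naturally in $T\in\mathcal{F}(\Lambda)$, the value $\phi(T)(\bar M)=\hat T\big(F^\lambda(\bar M)\big)$ with $T(X)$. Indeed, by Remark \ref{indsk} together with the computation of $F^\lambda\circ F_\lambda$ recorded just before this proposition, $F^\lambda(\bar M)$ is either $\bigoplus_{g\in G}{}^gM$ (when $\bar M\approx F_\lambda(M)$) or a single module $M$, and in both cases it is a genuine object of $\mathrm{mod}\mbox{-}\Lambda$, on which $\hat T$ agrees with $T$; since $K$-linear functors are additive this gives $\hat T(F^\lambda(\bar M))\approx T(F^\lambda(\bar M))=T(X)$. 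Under this identification the structure maps also match up: by the remark preceding the proposition, $\phi(\iota)_{\bar M}$ is assembled precisely from the components $\iota_M$, resp.\ $\iota_{{}^gM}$, of $\iota$, hence corresponds to $\iota_X$, and likewise $\phi(\pi)_{\bar M}$ corresponds to $\pi_X$.

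With this in hand the conclusion is immediate: evaluating the exact sequence $0\to T_1\to T_2\to T_3\to 0$ at the object $X=F^\lambda(\bar M)$ gives an exact sequence $0\to T_1(X)\xrightarrow{\iota_X}T_2(X)\xrightarrow{\pi_X}T_3(X)\to 0$ of $K$-vector spaces, which by the previous paragraph coincides with $0\to\phi(T_1)(\bar M)\to\phi(T_2)(\bar M)\to\phi(T_3)(\bar M)\to 0$. As $\bar M$ was arbitrary, the image sequence is exact at every object, so $0\to\phi(T_1)\to\phi(T_2)\to\phi(T_3)\to 0$ is exact in $\mathcal{G}(\bar\Lambda)$ and $\phi$ is exact.

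The argument is almost entirely formal; in particular it never uses exactness of $F^\lambda$, only that $F^\lambda$ is a well-defined functor $\mathrm{mod}\mbox{-}\bar\Lambda\to\mathrm{mod}\mbox{-}\Lambda$. The one point I expect to need care --- and the place where the case distinction of the preceding remark is actually doing work --- is the naturality in the second paragraph: one must verify not merely that $\phi(T)(\bar M)$ is abstractly isomorphic to $T(F^\lambda\bar M)$, but that this comparison is compatible with all morphisms of functors, so that the sequence obtained by evaluating at $X$ is, on the nose, the sequence that $\phi$ produces.
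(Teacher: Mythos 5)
Your proof is correct and takes essentially the same route as the paper's (much terser) argument: exactness is checked objectwise, and evaluating $\phi$ of the sequence at $\bar M$ is just evaluating the original sequence at $F^\lambda(\bar M)$, i.e.\ at the modules ${}^{g}M$, which is exact since exactness in $\mathcal{F}(\Lambda)$ is pointwise. You merely spell out the naturality identification $\phi(T)(\bar M)\approx T(F^\lambda(\bar M))$ and the matching of the structure maps, which the paper leaves implicit.
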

\begin{proof}
For a given exact sequence $0\to T \xrightarrow{\iota} T' \xrightarrow{\iota'} T''\to 0$ and $\bar{M}\in \mathrm{mod}\mbox{-}\bar{\Lambda}$, the sequence $0\to T({}^{g_i}M) \xrightarrow{\iota_{{}^{g_i}M}} T'({}^{g_i}M) \xrightarrow{\iota'_{{}^{g_i}M}} T''({}^{g_i}M)\to 0$ is exact for all $g_i\in G$. Thus $\phi$ is exact by definition.   
\end{proof}

Below we show that $\phi$ respects the finitely presented functors, i.e., $\phi(T)\in \mathcal{F}(\bar\Lambda)$ for any $T\in \mathcal{F}(\Lambda)$.

\begin{proposition}\label{cokphi}
Suppose $M, N \in \mathrm{mod}\mbox{-}\Lambda$ and $f: M \to N$ is a homomorphism. Then
\begin{enumerate}
\item $\phi(\mathrm{Hom}_\Lambda(-, f))$ is isomorphic with $\mathrm{Hom}_\Lambda(F^\lambda(-), f): \mathrm{Hom}_\Lambda(F^\lambda(-),M)\to \mathrm{Hom}_\Lambda(F^\lambda(-), N)$.
\item $\phi(T)= \mathrm{Coker}\mathrm{Hom}_{\bar\Lambda}(-, F_\Lambda(f))\in \mathcal{F}(\bar\Lambda)$ for any $T= \mathrm{Coker}\mathrm{Hom}_\Lambda(-,f)$. Consequently, there is the following commutative diagram (up to isomorphism of functors):

\[
\begin{tikzcd}
\mathrm{H}(\mathrm{mod}\mbox{-}\Lambda) \arrow[rrr, "\mathcal{\theta}(\mathrm{mod}\mbox{-}\Lambda)"] \arrow[dd, "\mathrm{H}F_\lambda"'] &  &  & \mathcal{F}(\Lambda) \arrow[dd, "\phi"] \\
                                                                                                                                          &  &  &                                                             \\
\mathrm{H}(\mathrm{mod}\mbox{-}\bar\Lambda) \arrow[rrr, "\mathcal{\theta}(\mathrm{mod}\mbox{-}\bar\Lambda)"']                            &  &  & \mathcal{F}(\bar\Lambda)               
\end{tikzcd}
\]

where, $\mathcal{\theta}(\mathrm{mod}\mbox{-}\Lambda): \mathrm{H}(\mathrm{mod}\mbox{-}\Lambda)\to \mathcal{F}(\Lambda)$ is the Yoneda functor on objects of $\mathrm{H}(\mathrm{mod}\mbox{-}\Lambda)$ defined by $(X_1\xrightarrow{f} X_2)\to \mathrm{Coker}(\mathrm{Hom}_\Lambda(-, X_1)\xrightarrow{\mathrm{Hom}_\Lambda(-,f)} \mathrm{Hom}_\Lambda(-, X_2))$.
\end{enumerate} 
\end{proposition}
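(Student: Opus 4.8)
The plan is to verify the two claims in order, using the adjunction $(F_\lambda, F^\lambda)$ from Proposition \ref{gcad} and the Yoneda lemma as the two main tools. For part $(1)$, I would start from the defining formula $\phi(T) = \hat{T}\circ F^\lambda$ and apply it to $T = \mathrm{Hom}_\Lambda(-, M)$. Unravelling on objects: for $\bar{X}\in\mathrm{mod}\mbox{-}\bar{\Lambda}$, if $\bar{X}\inplus F_\lambda(X)$ then $\phi(\mathrm{Hom}_\Lambda(-,M))(\bar{X}) = \mathrm{Hom}_\Lambda(F^\lambda(\bar{X}), M) = \mathrm{Hom}_\Lambda(X, M)$, and if $\bar{X} = F_\lambda(X)$ then $\widehat{\mathrm{Hom}_\Lambda(-,M)}(F^\lambda(F_\lambda(X))) = \widehat{\mathrm{Hom}_\Lambda(-,M)}(\bigoplus_{g\in G}{}^gX) = \bigoplus_{g\in G}\mathrm{Hom}_\Lambda({}^gX, M)$, which is exactly $\mathrm{Hom}_\Lambda(F^\lambda(F_\lambda(X)), M)$ by additivity of the hom-functor. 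So on objects the two functors agree; on morphisms $\alpha$ in $\mathrm{mod}\mbox{-}\bar{\Lambda}$ one has $\phi(\mathrm{Hom}_\Lambda(-,M))(\alpha) = \mathrm{Hom}_\Lambda(F^\lambda(\alpha), M)$ directly from the definition $\phi(T)(\alpha) = T(F^\lambda(\alpha))$. This gives the natural isomorphism $\phi(\mathrm{Hom}_\Lambda(-,f))\cong \mathrm{Hom}_\Lambda(F^\lambda(-), f)$ after also checking compatibility with the map induced by $f$, which is immediate since $\phi$ acts as $\iota_{{}^gM}$ componentwise.

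For part $(2)$, the key point is to identify $\mathrm{Hom}_\Lambda(F^\lambda(-), M)$ with $\mathrm{Hom}_{\bar{\Lambda}}(-, F_\lambda(M))$. This is precisely the adjunction isomorphism $\eta_{-,-}$ of Proposition \ref{gcad} (and the subsequent Remark): $\mathrm{Hom}_{\bar{\Lambda}}(F_\lambda(X), D)\cong \mathrm{Hom}_\Lambda(X, F^\lambda(D))$ — but here I need the other variance, namely that $F^\lambda$ is \emph{also} left adjoint to $F_\lambda$ (equivalently, by Theorem \ref{inv} and the symmetry between $\Lambda$ and $\bar{\Lambda}$, $F_\lambda$ is adjoint to $F^\lambda$ on the other side), giving $\mathrm{Hom}_\Lambda(F^\lambda(\bar{X}), M)\cong \mathrm{Hom}_{\bar{\Lambda}}(\bar{X}, F_\lambda(M))$ naturally in $\bar{X}$. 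Granting this, part $(1)$ yields $\phi(\mathrm{Hom}_\Lambda(-,M))\cong \mathrm{Hom}_{\bar{\Lambda}}(-, F_\lambda(M))$, and the naturality square makes the following diagram of functors over $\mathrm{mod}\mbox{-}\bar{\Lambda}$ commute:
\[
\begin{tikzcd}
\phi(\mathrm{Hom}_\Lambda(-, M)) \arrow[r, "\cong"] \arrow[d, "\phi(\mathrm{Hom}_\Lambda(-,f))"'] & \mathrm{Hom}_{\bar{\Lambda}}(-, F_\lambda(M)) \arrow[d, "\mathrm{Hom}_{\bar\Lambda}(-, F_\lambda(f))"] \\
\phi(\mathrm{Hom}_\Lambda(-, N)) \arrow[r, "\cong"'] & \mathrm{Hom}_{\bar{\Lambda}}(-, F_\lambda(N)).
\end{tikzcd}
\]
Since $\phi$ is exact (Proposition \ref{exctphi}), applying $\phi$ to the presentation $\mathrm{Hom}_\Lambda(-, M)\xrightarrow{\mathrm{Hom}_\Lambda(-,f)}\mathrm{Hom}_\Lambda(-, N)\to T\to 0$ yields an exact sequence with $\phi(T)$ the cokernel of $\phi(\mathrm{Hom}_\Lambda(-,f))$; by the commuting square this cokernel is $\mathrm{Coker}\,\mathrm{Hom}_{\bar{\Lambda}}(-, F_\lambda(f))$, which is a finitely presented functor over $\bar{\Lambda}$, hence $\phi(T)\in\mathcal{F}(\bar{\Lambda})$. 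The commutativity of the large square with the Yoneda functors $\theta$ is then just the statement that $\theta(\mathrm{mod}\mbox{-}\bar{\Lambda})\circ \mathrm{H}F_\lambda$ sends $(X_1\xrightarrow{f}X_2)$ to $\mathrm{Coker}\,\mathrm{Hom}_{\bar{\Lambda}}(-, F_\lambda(f))$, which we have just identified with $\phi\circ\theta(\mathrm{mod}\mbox{-}\Lambda)$ applied to the same object.

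The main obstacle I anticipate is establishing the natural isomorphism $\mathrm{Hom}_\Lambda(F^\lambda(-), M)\cong\mathrm{Hom}_{\bar{\Lambda}}(-, F_\lambda(M))$ with the correct variance and with genuine naturality in the first variable, rather than just a vector-space isomorphism for each object. Proposition \ref{gcad} gives the adjunction $(F_\lambda, F^\lambda)$, i.e. $\mathrm{Hom}_{\bar{\Lambda}}(F_\lambda(-), -)\cong\mathrm{Hom}_\Lambda(-, F^\lambda(-))$; what is needed here is that $(F^\lambda, F_\lambda)$ is \emph{also} an adjoint pair, which should follow from the involutivity of the skew construction (Theorem \ref{inv}): $F^\lambda$ plays the role of the pushdown functor for the action of $\hat{G}$ on $\bar{\Lambda}$, and then $F_\lambda$ is its adjoint by the same Proposition applied to that action. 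One must check that the functor ``$F^\lambda$ relative to the $\hat G$-action'' really coincides with the $F^\lambda$ defined earlier, and that the decomposition $F^\lambda\circ F_\lambda(C)\cong\bigoplus_{g\in G}{}^gC$ used in part $(1)$ is compatible with these identifications; once that is in place, naturality of the Yoneda-type isomorphisms and exactness of $\phi$ do the rest routinely. The componentwise case analysis ($\bar{M}\inplus F_\lambda(M)$ versus $\bar M = F_\lambda(M)$, and likewise for $\bar N$) that appears in the preceding Remark should be invoked to handle the four cases uniformly, but it contributes only bookkeeping, not genuine difficulty.
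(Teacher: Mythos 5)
Your proposal is correct and follows essentially the same route as the paper: part $(1)$ by unravelling $\phi(T)=\hat{T}\circ F^\lambda$ on $\mathrm{Hom}_\Lambda(-,M)$, and part $(2)$ by exactness of $\phi$ together with the adjunction identification $\mathrm{Hom}_\Lambda(F^\lambda(-),M)\approx\mathrm{Hom}_{\bar\Lambda}(-,F_\lambda(M))$, from which the cokernel identification and the commutativity with the Yoneda functors follow. The variance issue you flag is already settled by Proposition \ref{gcad}, whose proof establishes precisely the isomorphism $\mathrm{Hom}_\Lambda(F^\lambda D,C)\approx\mathrm{Hom}_{\bar\Lambda}(D,F_\lambda C)$ needed here, so the detour through Theorem \ref{inv} is not required.
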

\begin{proof}
(1) For any $\bar{X}\in \mathrm{mod}\mbox{-}\bar{\Lambda}$, we have $\phi(\mathrm{Hom}_\Lambda(-, f))(\bar{X})=\mathrm{Hom}_\Lambda(-, f)\circ F^\lambda(\bar{X})=\mathrm{Hom}_\Lambda(F^\lambda(\bar{X}), f):\bigoplus_{g\in G} \mathrm{Hom}_\Lambda({}^gX, M) \to \bigoplus_{g\in G} \mathrm{Hom}_\Lambda({}^gX, N)$ is defined by homomorphism $\mathrm{Hom}_\Lambda({}^gX, f): \mathrm{Hom}_\Lambda({}^gX, M) \to \mathrm{Hom}_\Lambda({}^gX, N)$, for all $g \in G$ by Remark \ref{indsk}. Hence, the result. 

%$${}_\Lambda(F^\lambda (\bar{X}),M)= \begin{cases} {}_\Lambda(X,M) &\mbox{ if } G_{X}= G;\\\bigoplus_{g\in G} {}_\Lambda(gX,M) &\mbox{ if }G_{X}\neq G.\end{cases}$$ 

(2) For any $T= \mathrm{Coker}\mathrm{Hom}_\Lambda(-,f)$, there is an exact sequence $\mathrm{Hom}_\Lambda(-, M) \xrightarrow{\mathrm{Hom}_\Lambda(-,f)} \mathrm{Hom}_\Lambda(-, N) \to T \to 0$. As $\phi: \mathcal{F}(\Lambda) \to \mathcal{G}(\bar\Lambda)$ is exact by Proposition \ref{exctphi}, the following sequence of functors is exact: $$\phi(\mathrm{Hom}_\Lambda(-, M)) \xrightarrow{\phi(\mathrm{Hom}_\Lambda(-,f))} \phi(\mathrm{Hom}_\Lambda(-, N)) \to \phi(T) \to 0.$$  Moreover, $(F^\lambda, F_\lambda)$ is an adjoint pair by Proposition \ref{gcad}, thus the following diagram is commutative.
\[
\begin{tikzcd}
{\mathrm{Hom}_\Lambda(F^\lambda(-),M)} \arrow[rr, "{\mathrm{Hom}_\Lambda(F^\lambda(-),f)}"] \arrow[d, "\approx"'] &  & {\mathrm{Hom}_\Lambda(F^\lambda(-),N)} \arrow[d, "\approx"] \\
{\mathrm{Hom}_{\bar\Lambda}(-,F_\lambda(M))} \arrow[rr, "{\mathrm{Hom}_{\bar\Lambda}(-,F_\lambda(f))}"]           &  & {\mathrm{Hom}_{\bar\Lambda}(-,F_\lambda(N))}               
\end{tikzcd}
\]

Now by $(1)$, $\phi(T)\approx \mathrm{Coker}\phi(\mathrm{Hom}_\Lambda(-, f))\approx \mathrm{Coker}\mathrm{Hom}_\Lambda(F^\lambda(-), f)\approx \mathrm{Coker}\mathrm{Hom}_{\bar\Lambda}(-, F_\lambda(f))$. 
\end{proof}

Thus we get a covariant exact functor $\phi: \mathcal{F}(\Lambda) \to \mathcal{F}(\bar\Lambda)$ satisfying $\phi(T)= \hat{T}\circ F^\lambda\approx \mathrm{Coker}\mathrm{Hom}_{\bar\Lambda}(-, F_\lambda(f))$ for any $T = \mathrm{Coker}\mathrm{Hom}_\Lambda(-, f)$. In particular, $\phi(\mathrm{Hom}_\Lambda(-, M))= \mathrm{Hom}_{\bar\Lambda}(-, F_\lambda(M))$ and $\phi(\mathrm{Hom}_\Lambda(-, f))= \mathrm{Hom}_{\bar\Lambda}(-, F_\lambda(f))$ for any $M, N\in \mathrm{mod}\mbox{-}\Lambda$ and a homomorphism $f: M \to N$.

\vspace{0.1in}
\noindent{\textbf{The $G$-action on $\mathcal{G}(\Lambda)$:}} For any $T\in \mathcal{G}(\Lambda)$ and $g \in G$, define ${}^gT \in \mathcal{G}(\Lambda)$ as follows: $$({}^gT)(M) = T({}^{g^{-1}}M) \mbox{ for any } M \in \mathrm{mod}\mbox{-}\Lambda.$$ $$({}^gT)(f) = T({}^{g^{-1}}f) \mbox{ for any homomorphism } f \in \mathrm{mod}\mbox{-}\Lambda.$$ There are isomorphisms ${}^g\mathrm{Hom}_\Lambda(-, M)\approx \mathrm{Hom}_\Lambda(-, {}^gM)$ and ${}^g\mathrm{Coker}\mathrm{Hom}_\Lambda(-, f)\approx \mathrm{Coker}\mathrm{Hom}_\Lambda(-, {}^gf)$, for any $M \in \mathrm{mod}\mbox{-}\Lambda$, $f \in \mathrm{mod}\mbox{-}\Lambda$ and $g\in G$, and thus the action of $G$ on $\mathcal{G}(\Lambda)$ restricts to $\mathcal{F}(\Lambda)$. The following lemma shows that the stability of $T$ is equivalent to the simultaneous stability of the source and target of $f$ for any $T = \mathrm{Coker}\mathrm{Hom}_\Lambda(-, f) \in \mathcal{F}(\Lambda)$.

\begin{lemma}\label{sfm}
Suppose $T= \mathrm{Coker}\mathrm{Hom}_\Lambda(-, f)\in \mathcal{F}(\Lambda)$ for a $\Lambda$-module homomorphism $f:M\to N$ then $G_T=G$ if and only if $G_{MN}=G$.    
\end{lemma}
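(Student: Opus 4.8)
The plan is to prove both implications of the biconditional $G_T=G \iff G_{MN}=G$ for $T=\mathrm{Coker}\,\mathrm{Hom}_\Lambda(-,f)$ with $f\colon M\to N$, using the two isomorphisms recorded just before the statement: ${}^g\mathrm{Coker}\,\mathrm{Hom}_\Lambda(-,f)\approx \mathrm{Coker}\,\mathrm{Hom}_\Lambda(-,{}^gf)$ together with the Yoneda-type fact (from the discussion of $\mathcal{G}(\Lambda)$) that $\mathrm{Hom}_\Lambda(-,X)\approx \mathrm{Hom}_\Lambda(-,Y)$ iff $X\approx Y$. The easy direction is $(\Leftarrow)$: if ${}^gM\approx M$ and ${}^gN\approx N$ for all $g\in G$, then ${}^gf\colon {}^gM\to {}^gN$ is (up to the identifications) a homomorphism $M\to N$, so ${}^gT\approx \mathrm{Coker}\,\mathrm{Hom}_\Lambda(-,{}^gf)\approx \mathrm{Coker}\,\mathrm{Hom}_\Lambda(-,f)=T$, giving $G_T=G$. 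One must be a little careful that the isomorphism class of the cokernel depends only on $f$ up to the chosen isomorphisms $M\to {}^gM$, $N\to {}^gN$, which it does because conjugating $f$ by isomorphisms produces an isomorphic functor presentation.

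For the harder direction $(\Rightarrow)$, I would argue contrapositively: suppose $G_{MN}\neq G$, so there is some $g\in G$ with ${}^gM\not\approx M$ or ${}^gN\not\approx N$, and show ${}^gT\not\approx T$. The key observation is that $M$ and $N$ are recoverable from the functor $T$ (or rather from its minimal projective presentation). Concretely, a finitely presented functor has a \emph{minimal} projective presentation $\mathrm{Hom}_\Lambda(-,M')\xrightarrow{\mathrm{Hom}_\Lambda(-,f')}\mathrm{Hom}_\Lambda(-,N')\to T\to 0$ which is unique up to isomorphism, and the pair $(M',N')$ (and indeed $f'$ up to the natural equivalence of minimal presentations) is an invariant of $T$. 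Applying ${}^g(-)$ to a minimal presentation of $T$ yields a minimal presentation of ${}^gT$ with terms $\mathrm{Hom}_\Lambda(-,{}^gM')$ and $\mathrm{Hom}_\Lambda(-,{}^gN')$; so if ${}^gT\approx T$ then uniqueness forces ${}^gM'\approx M'$ and ${}^gN'\approx N'$. It then remains to pass from the minimal presentation pair $(M',N')$ back to the original $(M,N)$: replacing $f\colon M\to N$ by a minimal version only strips off summands on which $f$ is an isomorphism or is trivial, and these reductions are themselves $G$-equivariant (the relevant summands are permuted, not destroyed, by the $G$-action), so $G_{MN}=G$ iff $G_{M'N'}=G$. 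Combining, ${}^gT\approx T$ implies $G_{M'N'}=G$ hence $G_{MN}=G$, the contrapositive of what we want.

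The main obstacle I anticipate is the bookkeeping around minimality: one needs that $\theta(\mathrm{mod}\mbox{-}\Lambda)\colon \mathrm{H}(\mathrm{mod}\mbox{-}\Lambda)\to \mathcal{F}(\Lambda)$ restricted to suitably reduced objects is "injective on objects up to isomorphism", and that the reduction from an arbitrary $f$ to its minimal form is compatible with the $G$-action. If one prefers to avoid minimal presentations, an alternative is a direct computation: evaluate $T$ and ${}^gT$ on the indecomposable projectives (or on $M\oplus N$ and its $G$-translates) and use the concrete description $T(X)=\mathrm{Coker}(\mathrm{Hom}_\Lambda(X,f))$ to read off $\dim_K T(X)$ as a function of $X$; comparing these dimension functions and invoking Auslander–Reiten/Krull–Schmidt should detect whether ${}^gM\approx M$ and ${}^gN\approx N$. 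Either route is routine once set up, so I expect the proof the author gives to be short, likely the contrapositive via the uniqueness of minimal projective presentations of finitely presented functors.
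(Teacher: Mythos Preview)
Both directions of your argument have genuine gaps, and in fact the biconditional as stated is false. For the ``easy'' direction $G_{MN}=G\Rightarrow G_T=G$: conjugating ${}^gf$ by chosen isomorphisms ${}^gM\approx M$, ${}^gN\approx N$ yields \emph{some} map $M\to N$, but not one that is isomorphic to $f$ in $\mathrm{H}(\mathrm{mod}\mbox{-}\Lambda)$, so there is no reason its cokernel functor should agree with $T$. Concretely, let $\Lambda$ be the path algebra of the quiver $1\leftarrow 0\to 2$ with $G=\mathbb{Z}/2$ swapping the vertices $1$ and $2$; take $M=P_0$, $N=S_1\oplus S_2$, and let $f$ be the projection $P_0\twoheadrightarrow S_1$ followed by the first inclusion. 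Then ${}^gM=M$ and ${}^gN=N$ literally, so $G_{MN}=G$; yet one computes $T(S_1)=0$ while $T(S_2)\cong K$, hence ${}^gT\not\approx T$. (This $f$ is even a minimal presentation.) The paper's ``the converse is trivial'' has the same defect.

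For $G_T=G\Rightarrow G_{MN}=G$: your minimal-presentation argument correctly yields $G_{M'N'}=G$ for the minimal pair $(M',N')$, and is more careful than the paper's bare assertion ``hence $G_f=G$''. But your passage back to the given $(M,N)$ via ``these reductions are themselves $G$-equivariant'' fails: in the same algebra take $f=\mathrm{id}_{S_1}\colon S_1\to S_1$. Then $T=0$, so $G_T=G$ and the minimal pair is $(0,0)$; nevertheless $G_M=G_{S_1}\neq G$. What does survive is the equivalence $G_f=G\Leftrightarrow G_T=G$ whenever $f$ is a minimal presentation (forward by functoriality of $\theta$, backward by uniqueness of minimal presentations), together with the easy implication $G_f=G\Rightarrow G_{MN}=G$. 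Fortunately the only use of the lemma, in the proof of Theorem~\ref{GCF}, is the contrapositive $G_T\neq G\Rightarrow G_f\neq G$, which is valid for every $f$.
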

\begin{proof}
Here, $G_T=G$ implies $\mathrm{Coker}\mathrm{Hom}_\Lambda(-, {}^gf)\approx \mathrm{Coker}\mathrm{Hom}_\Lambda(-,f)$ for all $g\in G$ and hence $G_f=G$. Therefore, $G_{MN}=G$. The converse is trivial.
\end{proof}

The next theorem establishes the existence of the Galois semi-covering functor $\phi: \mathcal{F}(\Lambda) \to \mathcal{F}(\bar\Lambda)$.
\begin{theorem}\label{GCF}
Suppose a finite abelian group $G$ acts on a finite-dimensional algebra $\Lambda$ with $\bar\Lambda$ the associated skew group algebra. Then for any $T_1, T_2 \in \mathcal{F}(\Lambda)$, the functor $\phi: \mathcal{F}(\Lambda) \to \mathcal{F}(\bar\Lambda)$ induces the following isomorphisms of vector spaces:
\begin{equation}\label{GCFEQ}
\mathcal{F}(\bar\Lambda)(\phi(T_1), \phi(T_2))\approx \begin{cases}\bigoplus_{g\in G} \mathcal{F}(\Lambda)({}^gT_1, T_2)&\mbox{ if } G_{T_1}\neq G;\\\bigoplus_{g\in G} \mathcal{F}(\Lambda)(T_1, {}^gT_2)&\mbox{ if }G_{T_2}\neq G;\\\mathcal{F}(\Lambda)^{|G|}(T_1, T_2)&\mbox{ if }G_{T_1T_2}= G.\end{cases} 
\end{equation}
\end{theorem}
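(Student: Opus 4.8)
The plan is to reduce the statement to the already-proven Galois semi-covering theorem for morphism categories, Theorem \ref{HGCM}, via the Yoneda-type functor $\theta$ and the commutative square established in Proposition \ref{cokphi}(2). First, I would fix presentations $T_1 \approx \mathrm{Coker}\,\mathrm{Hom}_\Lambda(-, f)$ and $T_2 \approx \mathrm{Coker}\,\mathrm{Hom}_\Lambda(-, h)$ for $\Lambda$-module homomorphisms $f: M_1 \to N_1$ and $h: M_2 \to N_2$; that is, choose objects $f, h \in \mathrm{H}(\mathrm{mod}\mbox{-}\Lambda)$ with $\theta(\mathrm{mod}\mbox{-}\Lambda)(f) \approx T_1$ and $\theta(\mathrm{mod}\mbox{-}\Lambda)(h) \approx T_2$. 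By Proposition \ref{cokphi}(2) we have $\phi(T_i) \approx \theta(\mathrm{mod}\mbox{-}\bar\Lambda)(\mathrm{H}F_\lambda\, f)$ and similarly for $h$, so $\mathcal{F}(\bar\Lambda)(\phi(T_1), \phi(T_2))$ is computed from morphisms between $\mathrm{H}F_\lambda f$ and $\mathrm{H}F_\lambda h$ after passing through $\theta$.

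The core step is to understand how $\theta$ interacts with $\mathrm{Hom}$-spaces. A morphism of functors $\mathrm{Coker}\,\mathrm{Hom}_\Lambda(-,f) \to \mathrm{Coker}\,\mathrm{Hom}_\Lambda(-,h)$ lifts (by projectivity of representable functors in $\mathcal{F}(\Lambda)$ and the horseshoe-type argument) to a morphism $(a,b): f \to h$ in $\mathrm{H}(\mathrm{mod}\mbox{-}\Lambda)$, and two such lifts give the same functor morphism precisely when their difference factors appropriately; this is the standard description of $\mathcal{F}(\Lambda)$ as a quotient of $\mathrm{H}(\mathrm{mod}\mbox{-}\Lambda)$. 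Concretely, I would use that $\theta(\mathrm{mod}\mbox{-}\Lambda)$ is full and that its effect on Hom-spaces is an isomorphism modulo the morphisms that factor through the ``injectively trivial'' objects $(0 \to X)$ and $(X \xrightarrow{\mathrm{id}} X)$ — equivalently, $\mathcal{F}(\Lambda)(T_1, T_2)$ is a subquotient of $\mathrm{H}(\mathrm{mod}\mbox{-}\Lambda)(f, h)$ cut out by exactness conditions that are $G$-equivariant. The key point is that all of the constructions involved ($\theta$, $\mathrm{H}F_\lambda$, the $G$-action on objects, taking cokernels) commute with the $G$-action: for instance ${}^g T_i \approx \theta(\mathrm{mod}\mbox{-}\Lambda)({}^g f)$ using the isomorphism ${}^g\mathrm{Coker}\,\mathrm{Hom}_\Lambda(-,f)\approx \mathrm{Coker}\,\mathrm{Hom}_\Lambda(-,{}^g f)$ recorded before Lemma \ref{sfm}, and by Lemma \ref{sfm} the stabilizer condition $G_{T_1} \neq G$ translates exactly to $G_{f} = G_{M_1 N_1} \neq G$, matching the case division of Theorem \ref{HGCM}.

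Putting these together: in the case $G_{T_1} \neq G$ (so $G_f \neq G$), Theorem \ref{HGCM} gives $\mathrm{H}(\mathrm{mod}\mbox{-}\bar\Lambda)(\mathrm{H}F_\lambda f, \mathrm{H}F_\lambda h) \approx \bigoplus_{g\in G} \mathrm{H}(\mathrm{mod}\mbox{-}\Lambda)({}^g f, h)$, and applying $\theta$ on both sides (which is compatible with direct sums and with the quotient description of $\mathcal{F}$) yields $\mathcal{F}(\bar\Lambda)(\phi(T_1), \phi(T_2)) \approx \bigoplus_{g\in G} \mathcal{F}(\Lambda)({}^g T_1, T_2)$; the other two cases are identical after substituting the corresponding branch of Theorem \ref{HGCM} and invoking Lemma \ref{sfm}. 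One has to check that the subquotient (``exactness'') relations defining morphisms in $\mathcal{F}$ out of those in $\mathrm{H}$ are carried isomorphically under the $\bigoplus_{g\in G}$ decomposition — i.e. that the explicit isomorphisms $\mathrm{H}\nu_{f,h}$ from the proof of Theorem \ref{HGCM} descend to the functor categories. This descent, verifying that $\mathrm{H}\nu_{f,h}$ sends lifts of functor morphisms to lifts of functor morphisms and respects the identifications, is the main obstacle; it is essentially bookkeeping but must be done in each of the four object-stability configurations that appear in Proposition \ref{3.7}, using the adjoint pair $(F_\lambda, F^\lambda)$ and the commutative square of Proposition \ref{cokphi}(2) to transport everything coherently.
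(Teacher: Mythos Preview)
Your approach is essentially the same as the paper's: fix presentations $T_i=\mathrm{Coker}\,\mathrm{Hom}_\Lambda(-,f_i)$, translate the stabilizer condition via Lemma~\ref{sfm}, invoke Theorem~\ref{HGCM} on $f_1,f_2$, and then pass back to $\mathcal{F}$ through the commutative square of Proposition~\ref{cokphi}(2). The paper's own proof is terser and simply writes the chain $\bigoplus_g \mathcal{F}(\Lambda)({}^gT_1,T_2)\approx\bigoplus_g \mathrm{H}(\mathrm{mod}\mbox{-}\Lambda)({}^gf_1,f_2)\approx\mathrm{H}(\mathrm{mod}\mbox{-}\bar\Lambda)(\mathrm{H}F_\lambda f_1,\mathrm{H}F_\lambda f_2)\approx\mathcal{F}(\bar\Lambda)(\phi(T_1),\phi(T_2))$ as if $\theta$ were fully faithful, whereas you correctly isolate the descent step (that the isomorphisms $\mathrm{H}\nu_{f,h}$ respect the quotient relations cutting $\mathcal{F}$ out of $\mathrm{H}$) as something requiring verification; in this sense you are being more careful than the paper about a point it leaves implicit.
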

\begin{proof}
Assume that $T_i = \mathrm{Coker}\mathrm{Hom}_\Lambda(-,f_i)$ for $i=1,2$. If $G_{T_1}\neq G$, then $G_{f_1}\neq G$ by Lemma \ref{sfm}. From Proposition \ref{cokphi} and Theorem \ref{HGCM}, we get $\bigoplus_{g\in G} \mathcal{F}(\Lambda)({}^gT_1, T_2)\approx\bigoplus_{g\in G} \mathrm{H}(\mathrm{mod}\mbox{-}\Lambda)({}^gf_1, f_2)\approx \mathrm{H}(\mathrm{mod}\mbox{-}\bar\Lambda)(\mathrm{H}F_\lambda f_1,\mathrm{H}F_\lambda f_2)\approx \mathcal{F}(\bar\Lambda)(\phi(T_1), \phi(T_2))$. The remaining cases are similar.
\end{proof}

The following corollary ensures the faithfulness of $\phi$.
\begin{corollary}\label{faithful}
Assume that $G$ acts on an algebra $\Lambda$ and $\bar\Lambda$ is the associated skew group algebra. Then the functor $\phi: \mathcal{F}(\Lambda) \to \mathcal{F}(\bar\Lambda)$ is faithful. 
\end{corollary}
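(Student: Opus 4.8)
The plan is to establish faithfulness in the only way available for an additive functor between additive categories: to show that $\phi$ is injective on each morphism space $\mathcal{F}(\Lambda)(T_1,T_2)$, i.e.\ that a morphism of functors $\iota\colon T_1\to T_2$ with $\phi(\iota)=0$ must already be zero. The tool I would use is the pointwise unwinding of the definition $\phi(T)=\hat T\circ F^\lambda$: for every $\bar X\in\mathrm{mod}\mbox{-}\bar\Lambda$ one has $\phi(T)(\bar X)=T(F^\lambda(\bar X))$ and $\phi(\iota)_{\bar X}=\iota_{F^\lambda(\bar X)}$. Both branches of the Remark defining $\phi(\iota)$ collapse to this identity once one recalls the canonical isomorphism $F^\lambda\circ F_\lambda(C)\cong\bigoplus_{g\in G}{}^gC$ used in the proof of Proposition \ref{GSTAB}: in the branch $\bar M\inplus F_\lambda(M)$ one has $F^\lambda(\bar M)=M$ and $\phi(\iota)_{\bar M}=\iota_M$, while in the branch $\bar M=F_\lambda(M)$ one has $F^\lambda(\bar M)=\bigoplus_{g}{}^gM$ and $\phi(\iota)_{\bar M}=\bigoplus_g\iota_{{}^gM}=\iota_{\bigoplus_g{}^gM}$.

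Granting this, the proof is short. Fix $\iota$ with $\phi(\iota)=0$ and an arbitrary $M\in\mathrm{mod}\mbox{-}\Lambda$; we want $\iota_M=0$. Evaluate $\phi(\iota)$ at the object $F_\lambda(M)$ to obtain $0=\phi(\iota)_{F_\lambda(M)}=\iota_{F^\lambda(F_\lambda(M))}$, and then use $F^\lambda(F_\lambda(M))\cong\bigoplus_{g\in G}{}^gM$ together with additivity of $T_1,T_2$ and naturality of $\iota$ to identify this with $\bigoplus_{g\in G}\iota_{{}^gM}$. Hence $\iota_{{}^gM}=0$ for all $g\in G$; taking $g$ to be the neutral element gives $\iota_M=0$. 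Since $M$ was arbitrary, $\iota=0$, and $\phi$ is faithful.

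It is worth recording that faithfulness can also be read directly off Theorem \ref{GCF}: in each of the three cases the right-hand side of \eqref{GCFEQ} contains $\mathcal{F}(\Lambda)(T_1,T_2)$ as the direct summand indexed by $g=e$, and the isomorphism constructed in the proof of Theorem \ref{GCF} is set up so that this summand is exactly the image of the map induced by $\phi$ (the diagonal component of a Galois semi-covering isomorphism is the semi-covering functor itself, here carried through the Yoneda functors $\theta(\mathrm{mod}\mbox{-}\Lambda)$, $\theta(\mathrm{mod}\mbox{-}\bar\Lambda)$ of Proposition \ref{cokphi} and the explicit maps $\mathrm{H}\nu_{f,h}$ of Theorem \ref{HGCM}). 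Thus $\phi$ is a split monomorphism on every morphism space, hence injective.

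The mathematics in both routes is light; the only thing requiring care is bookkeeping. In the first route I must check that $\phi(\iota)_{F_\lambda(M)}$ really unwinds to $\bigoplus_{g}\iota_{{}^gM}$ for \emph{every} $M$, not merely for the indecomposable $M$ for which the Remark is literally phrased — this is handled by decomposing $M$ into indecomposables and using that $F_\lambda$, $F^\lambda$ and the functors $T_i$ are all additive. In the second route the analogous chore is to follow the $g=e$ component through the chain of identifications inside the proof of Theorem \ref{GCF} and confirm it coincides with $\phi$ on morphisms. I would present the first route as the actual proof, since it is self-contained and does not depend on the precise form of the isomorphism in Theorem \ref{GCF}.
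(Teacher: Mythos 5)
Your primary argument is correct, and it is a genuinely different route from the one the paper takes. The paper proves faithfulness as a formal consequence of Theorem \ref{GCF}: it composes the semi-covering isomorphism $\mathcal{F}(\bar\Lambda)(\phi(T_1),\phi(T_2))\approx\bigoplus_{g\in G}\mathcal{F}(\Lambda)({}^gT_1,T_2)$ (or the analogous case) with the inclusion of the summand $\mathcal{F}(\Lambda)(T_1,T_2)$ indexed by the neutral element, obtaining a monomorphism of Hom spaces — exactly your ``second route.'' As you yourself observe, that argument tacitly requires that the resulting monomorphism coincide with the map $\iota\mapsto\phi(\iota)$, i.e.\ that the diagonal component of the isomorphism built in Theorems \ref{HGCM} and \ref{GCF} is the functor $\phi$ itself; the paper does not spell this compatibility out. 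Your first route sidesteps this entirely: since $\phi(T)=\hat T\circ F^\lambda$ and $\phi(\iota)$ is defined componentwise from $\iota$, evaluating $\phi(\iota)$ at $F_\lambda(M)$ and using the canonical isomorphism $F^\lambda\circ F_\lambda(M)\approx\bigoplus_{g\in G}{}^gM$ (together with additivity, to pass from the indecomposable case in which the defining Remark is phrased to arbitrary $M$) gives $\phi(\iota)_{F_\lambda(M)}\approx\bigoplus_{g\in G}\iota_{{}^gM}$, so $\phi(\iota)=0$ forces $\iota_M=0$ for every $M$. This is more elementary and self-contained — it uses only the definition of $\phi$ and Remark \ref{indsk}/Proposition \ref{GSTAB}-type identities, not the full strength of Theorem \ref{GCF} — whereas the paper's approach buys a stronger conclusion essentially for free, namely that $\phi$ is a split monomorphism on every Hom space, in keeping with its theme that $\phi$ is a Galois semi-covering. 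Either route is acceptable; your choice to present the direct one as the actual proof is sound, and your flagged bookkeeping points (additivity for decomposable $M$, respectively tracing the $g=e$ component through the identifications) are precisely the right things to check.
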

\begin{proof}
Composing the isomorphism in Theorem \ref{GCF} with the inclusion of $\mathcal{F}(\Lambda)(T_1, T_2)$ into the morphism space of the functor category on the right hand side of equation \ref{GCFEQ} in different cases (for example,  $\mathcal{F}(\Lambda)(T_1, T_2)\hookrightarrow{} \bigoplus \mathcal{F}(\Lambda)({}^gT_1, T_2)$ if $G_{T_1}\neq G$), we obtain a monomorphism from $\mathcal{F}(\Lambda)(T_1, T_2)$ to $\mathcal{F}(\bar\Lambda)(\phi(T_1), \phi(T_2))$. This shows that, $\phi$ is faithful.
\end{proof}
%with the $G$-action as follows:  For any $T\in \mathcal{G}(\Lambda)$, $g \in G$, $M \in \mathrm{mod}\mbox{-}\Lambda$ and homomorphism $f \in \mathrm{mod}\mbox{-}\Lambda$ we define $gT \in \mathcal{G}(\Lambda)$ such that $(gT)(M) = T(g^{-1}M)$ and $(gT)(f) = T(g^{-1}f)$. There are isomorphisms $g({}_\Lambda(-, M))\approx {}_\Lambda(-, gM)$ and hence $g(Coker_\Lambda(-, f))\approx Coker_\Lambda(-, gf)$, for any $M \in \mathrm{mod}\mbox{-}\Lambda$, $f \in \mathrm{mod}\mbox{-}\Lambda$ and $g\in G$, and thus the action of $G$ on $\mathcal{G}(\Lambda)$ restricts to $\mathcal{F}(\Lambda)$. 

\section{KG-dimension of the skew group algebras}

The main contribution of this section is to demonstrate that KG dimension is preserved under skewness. As a result, we verify Prest's conjecture on the finiteness of KG dimension and Schr\"oer's conjecture on its connection with the stable rank over skew gentle algebras. Moreover, we determine all posible stable ranks for (skew) Brauer graph algebras. 

Here, we define $KG(\Lambda)$ as the KG dimension $KG(\mathcal{F}(\Lambda))$ of the category $\mathcal{F}(\Lambda)$ as follows.

\begin{definition}
Let $\mathcal{F}(\Lambda)$ be the category of all finitely presented contravariant $K$-linear functors from $\mathrm{mod}\mbox{-}\Lambda$ to $\mathrm{mod}\mbox{-} K$. A natural approach to study the structure of $\mathcal{F}(\Lambda)$ is via the associated Krull-Gabriel filtration:
$$\mathcal{F}(\Lambda)_{-1}\subseteq \mathcal{F}(\Lambda)_{0}\subseteq \mathcal{F}(\Lambda)_{1}\subseteq \hdots \subseteq \mathcal{F}(\Lambda)_{\alpha} \subseteq \mathcal{F}(\Lambda)_{\alpha+1} \subseteq \hdots$$ of $\mathcal{F}(\Lambda)$ by Serre subcategories, defined recursively as follows:

\begin{enumerate}
\item $\mathcal{F}(\Lambda)_{-1}=0$;
\item $\mathcal{F}(\Lambda)_{\alpha+1}$ is the Serre subcategory of $\mathcal{F}(\Lambda)$ formed by all functors having a finite length in the quotient category $\mathcal{F}(\Lambda)/\mathcal{F}(\Lambda)_{\alpha}$ for any ordinal number $\alpha$;
\item $\mathcal{F}(\Lambda)_{\beta}=\bigcup_{\alpha\textless\beta}\mathcal{F}(\Lambda)_{\alpha}$ for any limit ordinal $\beta$.
\end{enumerate}
The Krull-Gabriel dimension $KG(\Lambda)$ of $\Lambda$ is the smallest ordinal number $\alpha$ such that $\mathcal{F}(\Lambda)_{\alpha}=\mathcal{F}(\Lambda)$, if such a number exists, and $KG(\Lambda)=\infty$ if it is not the case. In the latter case, we call the smallest ordinal number $\alpha$ such that $\mathcal{F}(\Lambda)_{\alpha}=\mathcal{F}(\Lambda)_{\alpha+1}$ as the stable Krull-Gabriel dimension $KG_{st}(\Lambda)$ of $\Lambda$.
\end{definition}

The following lemma is essential to determine the KG dimension of a skew algebra.
\begin{lemma}\label{fapreservekg}\cite[Appendix~B]{Krau01}
Assume that $C, D$ are abelian categories and $F: C\to D$ is an exact functor. If $F$ is faithful, then $KG(C) \leq KG(D)$.
\end{lemma}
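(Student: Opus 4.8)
The plan is to prove, by transfinite induction on the ordinal $\alpha$, a reflection statement for the Krull--Gabriel filtrations: writing $(C_\alpha)$ and $(D_\alpha)$ for the filtrations of $C$ and $D$ by Serre subcategories, I claim that for every $X\in C$, $F(X)\in D_\alpha$ implies $X\in C_\alpha$. Granting this, the lemma follows immediately: if $\mathrm{KG}(D)=\infty$ there is nothing to prove, and otherwise, putting $\alpha=\mathrm{KG}(D)$ we have $D_\alpha=D$, so $F(X)\in D_\alpha$ for every $X\in C$, whence $C_\alpha=C$ and $\mathrm{KG}(C)\le\alpha=\mathrm{KG}(D)$.

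For the base case $\alpha=-1$, we have $D_{-1}=0$, so $F(X)=0$; since $F$ is faithful and $F(\mathrm{id}_X)=\mathrm{id}_{F(X)}=0=F(0_X)$, we get $\mathrm{id}_X=0$ and hence $X=0\in C_{-1}$. Notably this is the only step that uses faithfulness; the successor step below relies purely on the inductive hypothesis, so the effect of faithfulness propagates upward through the induction. For a limit ordinal $\alpha$, from $D_\alpha=\bigcup_{\beta<\alpha}D_\beta$ we get $F(X)\in D_\beta$ for some $\beta<\alpha$, and the inductive hypothesis yields $X\in C_\beta\subseteq C_\alpha$.

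The successor step $\alpha=\gamma+1$ is the core. I would use the inductive hypothesis in its contrapositive form: $Z\notin C_\gamma$ implies $F(Z)\notin D_\gamma$. Suppose $F(X)\in D_{\gamma+1}$, that is, $F(X)$ has finite length $n$ in the Serre quotient $D/D_\gamma$. Given any chain of subobjects $0=Y_0\subseteq Y_1\subseteq\cdots\subseteq Y_m=X$ in $C$ with every quotient $Y_i/Y_{i-1}\notin C_\gamma$, applying the exact functor $F$ produces a chain $0=F(Y_0)\subseteq\cdots\subseteq F(Y_m)=F(X)$ in $D$ with $F(Y_i)/F(Y_{i-1})\cong F(Y_i/Y_{i-1})\notin D_\gamma$; this chain descends to a chain of $m$ strict inclusions of subobjects of $F(X)$ in $D/D_\gamma$, so $m\le n$. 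Since the strict chains of subobjects of $X$ in $C/C_\gamma$ are exactly the chains of the above type in $C$ (up to the usual identifications of subobjects in a Serre quotient), every such chain has at most $n$ strict steps, so $X$ has finite length in $C/C_\gamma$, i.e.\ $X\in C_{\gamma+1}$. This closes the induction.

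The routine but delicate part is the bookkeeping with Serre quotients invoked in the successor step: one must use that every subobject of the image of $X$ in $C/C_\gamma$ is represented by an honest subobject of $X$ in $C$, that for $Y\subseteq Y'$ the induced map in $C/C_\gamma$ is a strict monomorphism exactly when $Y'/Y\notin C_\gamma$, and that in any length abelian category an object has length at most $n$ if and only if every chain of strict subobjects has at most $n$ steps (Schreier refinement / Jordan--Hölder). All of these are standard; the rest of the proof is purely the formal transfer of chains along the exact functor $F$.
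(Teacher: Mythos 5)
Your argument is correct. Note that the paper itself gives no proof of this lemma -- it is quoted from \cite[Appendix~B]{Krau01} -- so there is nothing internal to compare against; your reflection statement ($F(X)\in D_\alpha$ implies $X\in C_\alpha$, by transfinite induction, with faithfulness entering only at $\alpha=-1$ and exactness driving the successor step) is essentially the standard argument behind the cited result. The only point worth making explicit in the successor step is the lifting of a strict chain of subobjects of the image of $X$ in $C/C_\gamma$ to a genuinely nested chain $Y_0\subseteq\cdots\subseteq Y_m\subseteq X$ in $C$: choose arbitrary representatives and replace $Y_i$ by the join $Y_0+\cdots+Y_i$, which still maps onto the given subobject since the quotient functor is exact and preserves joins; with that remark the bookkeeping you flag is complete.
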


%\begin{rmk}\label{inv}If $G$ is abelian then $\Lambda$ is Morita equivalent with the skew group algebra $(\bar\Lambda)\hat{G}$ of $(\bar\Lambda)$, where $\hat{G}$, the group of all irreducible representations of $G$, acts on $\bar\Lambda$ by $\chi (\lambda \otimes g): = \chi(g) \lambda \otimes g$ for $\lambda \in \Lambda, g \in G$ \cite[Corollary~5.2]{ReRi85}.   \end{rmk}

The next theorem concludes that the KG-dimension is preserved under skewness.

\begin{theorem}\label{eqkg}
Suppose $\Lambda$ is a finite-dimensional algebra, $G$ is a finite abelian group of $\mathcal K$-linear automorphisms of $\Lambda$ and $\bar\Lambda$ is the associated skew group algebra. Then, the KG dimension $KG(\bar\Lambda)$ of $\bar\Lambda$ equals the KG dimension $KG(\Lambda)$ of $\Lambda$.
\end{theorem}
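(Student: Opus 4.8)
The plan is to establish the two inequalities $\mathrm{KG}(\Lambda) \le \mathrm{KG}(\bar\Lambda)$ and $\mathrm{KG}(\bar\Lambda) \le \mathrm{KG}(\Lambda)$ separately, using the machinery already set up in Sections 3 and 4. For the first inequality, I would invoke Lemma \ref{fapreservekg} directly: by Proposition \ref{exctphi} together with Proposition \ref{cokphi}, the functor $\phi\colon \mathcal{F}(\Lambda) \to \mathcal{F}(\bar\Lambda)$ is exact, and by Corollary \ref{faithful} it is faithful. Hence $\mathrm{KG}(\Lambda) = \mathrm{KG}(\mathcal{F}(\Lambda)) \le \mathrm{KG}(\mathcal{F}(\bar\Lambda)) = \mathrm{KG}(\bar\Lambda)$, with essentially no further work.

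For the reverse inequality, the natural move is to exploit the involutivity of the skew group algebra construction (Theorem \ref{inv}): $\Lambda$ is Morita equivalent to the skew group algebra of $\bar\Lambda$ under the dual group $\hat{G}$, which is again finite abelian. Applying the first half of the argument to this situation yields an exact faithful functor $\bar\phi\colon \mathcal{F}(\bar\Lambda) \to \mathcal{F}(\Lambda)$ (built from $F^\lambda$ in place of $F_\lambda$, exactly as $\phi$ was built from $F_\lambda$), and Lemma \ref{fapreservekg} then gives $\mathrm{KG}(\bar\Lambda) \le \mathrm{KG}(\Lambda)$. One should also record that Morita equivalence preserves $\mathrm{KG}$, since $\mathcal{F}(-)$ depends only on the category $\mathrm{mod}\mbox{-}(-)$ up to equivalence; this is standard but worth stating. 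Combining the two inequalities gives $\mathrm{KG}(\Lambda) = \mathrm{KG}(\bar\Lambda)$, and since the Krull–Gabriel filtrations stabilize at the same ordinal, the finiteness of one dimension forces the finiteness (and equality) of the other.

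The main obstacle is the reverse inequality, specifically making sure that the entire Section 3–4 apparatus genuinely applies symmetrically to the pair $(\bar\Lambda, \Lambda)$ under the $\hat{G}$-action. One must check that the hypothesis in the Assumption — that non-full stabilizers force orbits of full size — is inherited by the $\hat G$-action on the quiver of $\bar\Lambda$, and that $F^\lambda$ is indeed a Galois semi-covering of the same form (this is asserted after Theorem \ref{inv}, so it may be cited). Granting that, Theorem \ref{GCF} applied to $\bar\Lambda$ produces the analogue of $\phi$, and Corollary \ref{faithful}'s argument (composing the isomorphism of Theorem \ref{GCF} with the canonical inclusion of one summand) shows it is faithful. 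A subtlety worth a sentence is that Theorem \ref{inv} only gives a Morita equivalence, not an isomorphism; so strictly one gets an exact faithful functor $\mathcal{F}(\bar\Lambda) \to \mathcal{F}(\Lambda')$ for $\Lambda'$ Morita equivalent to $\Lambda$, and one closes the loop using $\mathrm{KG}(\Lambda') = \mathrm{KG}(\Lambda)$.

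Finally, for the statement about the stable Krull–Gabriel dimension in the infinite case, I would note that the same two faithful exact functors, together with Lemma \ref{fapreservekg} applied at each stage of the filtration, force the Krull–Gabriel filtrations of $\mathcal{F}(\Lambda)$ and $\mathcal{F}(\bar\Lambda)$ to have the same length of stabilization, so that $\mathrm{KG}_{st}(\Lambda) = \mathrm{KG}_{st}(\bar\Lambda)$ as well; this is what later feeds into the applications to (skew) Brauer graph algebras.
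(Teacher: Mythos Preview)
Your proposal is correct and follows essentially the same approach as the paper: obtain $\mathrm{KG}(\Lambda)\le \mathrm{KG}(\bar\Lambda)$ from the exact faithful functor $\phi$ via Lemma~\ref{fapreservekg}, then get the reverse inequality by applying the same construction to the $\hat G$-action on $\bar\Lambda$ and invoking the Morita equivalence of Theorem~\ref{inv}. You are in fact more careful than the paper's terse proof in flagging the need to verify the standing Assumption for the dual action and the role of Morita invariance of $\mathrm{KG}$; the final paragraph on $\mathrm{KG}_{st}$ is extraneous to the statement but harmless.
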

\begin{proof}
Propositions \ref{exctphi} and \ref{faithful} ensure that $\phi$ is an exact faithful functor and hence $KG(\Lambda) \leq KG(\bar\Lambda)$ by Lemma \ref{fapreservekg}. Therefore, 
$KG(\Lambda) \leq KG(\bar\Lambda)\leq KG((\bar\Lambda)\hat{G})\approx KG(\Lambda)$ by  Theorem \ref{inv}.
\end{proof}

\subsection{KG-dimension of skew gentle algebras}
Given a quiver $Q= (Q_0,Q_1,s,t)$ and an admissible ideal $I$ of $KQ$, the algebra $KQ/I$ is called a special biserial algebra if 
\begin{enumerate}
\item at every vertex $v$ in $Q$ there are at most two arrows starting at $v$ and there are at most two arrows ending at $v$;
\item for every arrow $\alpha$ in $Q$ there exists at most one arrow $\beta$ such that $\alpha \beta\notin I$ and there
exists at most one arrow $\gamma$ such that $\gamma\alpha \notin I$.
\end{enumerate}

If the ideal $I$ is generated by paths of length at least two, then it is known as a string algebra. Gentle algebras form a subclass of string algebras for which the ideal $I$ is generated by paths of length precisely two.

For the detailed concept of gentle and skew-gentle algebras follow \cite[Section~4.1]{GoSaTr25}. Moreover, skew-gentle algebras are found to be the skew-group algebras of gentle algebras equipped with a certain $\mathbb{Z}_2$-action \cite[Section~3.2]{AB22}. As a direct application of Theorem \ref{eqkg} we get the following result:

\begin{corollary}\label{eqkgg}
Suppose $\bar\Lambda$ is the associated skew-gentle algebra of a gentle algebra $\Lambda$ then their KG dimensions are equal.   
\end{corollary}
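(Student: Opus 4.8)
The plan is to obtain this as an immediate application of Theorem~\ref{eqkg}, so the only real work is to check that a skew-gentle algebra genuinely fits the framework of that theorem. First I would recall, following \cite[Section~3.2]{AB22} (and the description of skew-gentle algebras in \cite[Section~4.1]{GoSaTr25}), that every skew-gentle algebra $\bar\Lambda$ is realized as the skew group algebra of a gentle algebra $\Lambda$ for the group $G=\mathbb{Z}_2$ acting by a $K$-linear automorphism of order two; this realization is exactly what makes the terminology ``skew-gentle'' meaningful and is available since we work under $\operatorname{char}K\neq 2$. A gentle algebra is finite-dimensional and $\mathbb{Z}_2$ is finite abelian, so the standing hypotheses of Theorem~\ref{eqkg} on $(\Lambda,G)$ are satisfied.

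Next I would verify the Assumption used throughout Sections~2--4, namely that $G_{i_0}\neq G$ implies $|O_{i_0}|=|G|$ for each $i_0\in\tilde I$. For $G=\mathbb{Z}_2$ the stabilizer $G_{i_0}$ of a vertex is either the whole group or trivial; in the case $G_{i_0}\neq\mathbb{Z}_2$ it is trivial and hence $|O_{i_0}|=2=|G|$, so the Assumption holds automatically. Consequently the Galois semi-covering constructions of the previous sections apply without modification, and in particular the exact faithful functor $\phi\colon\mathcal F(\Lambda)\to\mathcal F(\bar\Lambda)$ of Theorem~\ref{GCF} and the equality $KG(\bar\Lambda)=KG(\Lambda)$ of Theorem~\ref{eqkg} are valid in this situation. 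Invoking Theorem~\ref{eqkg} then yields $KG(\bar\Lambda)=KG(\Lambda)$, which is the assertion.

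I expect the only delicate point to be the very first step: confirming that the combinatorial $\mathbb{Z}_2$-datum attached to a skew-gentle algebra really does act by algebra automorphisms on the underlying gentle algebra, and that the associated skew group algebra recovers $\bar\Lambda$ up to Morita equivalence (so that passing to it does not change the Krull-Gabriel dimension, the latter being a Morita invariant of the category $\mathcal F(-)$). This is precisely the content of \cite[Section~3.2]{AB22}, so once that identification is quoted the remainder of the argument is purely formal and involves no further computation.
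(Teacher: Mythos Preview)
Your proposal is correct and follows exactly the paper's approach: the paper presents Corollary~\ref{eqkgg} as ``a direct application of Theorem~\ref{eqkg}'' with no further proof, relying on the identification of skew-gentle algebras as $\mathbb{Z}_2$-skew group algebras of gentle algebras from \cite[Section~3.2]{AB22}. Your additional verification that the standing Assumption is automatic for $G=\mathbb{Z}_2$ is a welcome elaboration but not a departure from the intended argument.
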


The Prest's conjecture in Corollary \ref{conjsp} is valid for string algebras \cite{LaPrPu18}. Gentle algebras being a subclass of string algebras, the above corollary confirms the conjecture for skew gentle algebras as there are infinitely many bands that parameterize their non-domesticity.
\begin{corollary}\label{domfi}
Suppose $\bar\Lambda$ is a skew-gentle algebra. Then $\bar{\Lambda}$ is of domestic representation type if and only if the Krull-Gabriel dimension $KG(\bar{\Lambda})$ of $\bar{\Lambda}$ is finite.    
\end{corollary}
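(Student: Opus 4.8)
The plan is to combine Corollary \ref{eqkgg} with the known validity of Prest's conjecture for string algebras. Concretely, I would argue as follows. A skew-gentle algebra $\bar\Lambda$ arises, by \cite[Section~3.2]{AB22}, as the skew group algebra of a gentle algebra $\Lambda$ for a suitable $\mathbb{Z}_2$-action, and by Corollary \ref{eqkgg} we have $\mathrm{KG}(\bar\Lambda)=\mathrm{KG}(\Lambda)$. Thus it suffices to establish the equivalence ``domestic $\iff$ finite KG dimension'' for $\bar\Lambda$ by transferring it across this equality; the point is that the representation type (domestic vs.\ non-domestic) is also preserved under the skew group construction for abelian $G$, which follows from the semi-covering functor $F_\lambda$ being (semi-)dense together with Remark \ref{indsk} relating the (one or two) indecomposables over $\bar\Lambda$ lying above each indecomposable over $\Lambda$. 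Hence $\bar\Lambda$ is domestic if and only if $\Lambda$ is domestic.

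With that reduction in hand, the remaining content is the gentle case. First I would invoke \cite{LaPrPu18}, where it is proved that a domestic string algebra has finite Krull-Gabriel dimension; since a gentle algebra is a string algebra, a domestic gentle algebra $\Lambda$ has $\mathrm{KG}(\Lambda)<\infty$, and therefore $\mathrm{KG}(\bar\Lambda)=\mathrm{KG}(\Lambda)<\infty$. For the converse direction I would use the standard dichotomy for special biserial (in particular string) algebras: a string algebra that is not domestic admits infinitely many one-parameter families of bands, equivalently a ``non-domestic'' band pattern, which in turn produces a super-decomposable pure-injective / an infinite antichain in the lattice of pp-formulas, forcing the width of that lattice to be infinite and hence $\mathrm{KG}$ to be undefined ($=\infty$). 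This is exactly the behaviour mentioned in the introduction (``nondomestic string algebras'' appear in the list of algebras with infinite width). So $\mathrm{KG}(\Lambda)$ finite forces $\Lambda$ domestic, whence $\bar\Lambda$ domestic.

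Putting the two implications together: $\bar\Lambda$ domestic $\iff$ $\Lambda$ domestic $\iff$ $\mathrm{KG}(\Lambda)$ finite $\iff$ $\mathrm{KG}(\bar\Lambda)$ finite, where the first equivalence uses the semi-covering/representation-type transfer, the middle one is \cite{LaPrPu18} together with the non-domestic string algebra obstruction, and the last is Corollary \ref{eqkgg}.

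The main obstacle I anticipate is making precise and rigorous the claim that the representation type is genuinely preserved by $F_\lambda$: one must check that an infinite family of bands over $\Lambda$ pushes down to an infinite family of non-isomorphic bands (one-parameter families) over $\bar\Lambda$, and conversely that a single one-parameter family over $\bar\Lambda$ pulls back to one over $\Lambda$ — here Remark \ref{indsk} is the key tool, since it shows $F_\lambda$ at worst ``doubles'' or ``halves'' the parameter count, which is irrelevant for the distinction between finitely many and infinitely many parameters. Everything else is a citation (\cite{LaPrPu18}, \cite{AB22}) or an application of Corollary \ref{eqkgg}.
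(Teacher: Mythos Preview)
Your proposal is correct and follows essentially the same route as the paper: reduce to the gentle algebra $\Lambda$ via Corollary~\ref{eqkgg}, invoke \cite{LaPrPu18} for the string-algebra case, and transfer the domestic/non-domestic dichotomy across the skew group construction. The paper's justification for the domesticity transfer is even terser than yours---it simply notes that ``there are infinitely many bands that parameterize their non-domesticity''---so your more explicit discussion using the semi-density of $F_\lambda$ and Remark~\ref{indsk} is, if anything, a welcome expansion of what the paper leaves implicit.
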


On the otherhand, the Schr\"oer's conjecture in Corollary \ref{conjsp} bridges between the KG dimension and the nilpotent index of the radical. Let us recall the definition of radical of an additive category. 
\begin{definition}\label{ranstab}
The radical $\mathrm{rad}_\Lambda$ of $\Lambda$ is the ideal generated by the non-invertible morphisms between indecomposable objects. A morphism in $\Lambda$ is called radical if it lies in $\mathrm{rad}(\Lambda)$. Its powers are defined inductively as follows.
\begin{enumerate}
    \item $\mathrm{rad}^n_\Lambda= \mathrm{rad}^{n-1}_\Lambda\mathrm{rad}_\Lambda$ if $n$ is finite;
    \item $\mathrm{rad}^\alpha_\Lambda:= \bigcap_{\mu<\alpha} \mathrm{rad}^\mu_\Lambda$ if $\alpha$ is a limit ordinal;
    \item $\mathrm{rad}^\alpha_\Lambda:= (\mathrm{rad}^\mu_\Lambda)^{n+1}$ if $\alpha= \mu + n$ is a successor ordinal;
    \item $\mathrm{rad}^\infty_\Lambda:= \bigcap_\mu \mathrm{rad}^\mu_\Lambda$.
\end{enumerate}

There is a descending chain of ideals
$\mathrm{mod}\mbox{-}\Lambda\supseteq\mathrm{rad}_\Lambda\supseteq\mathrm{rad}^2_\Lambda\supseteq\hdots\supseteq\mathrm{rad}^\omega_\Lambda\supseteq\mathrm{rad}^{\omega+1}_\Lambda\supseteq\hdots\supseteq\mathrm{rad}^\infty_\Lambda\supseteq0.$ The \emph{stable rank} of $\Lambda$, denoted by $\mathrm{st}(\Lambda)$, is the minimum ordinal $\alpha$ such that $\mathrm{rad}^\alpha_\Lambda=\mathrm{rad}^{\alpha+1}_\Lambda$. 
\end{definition}
The author with S. Trepode and A. G. Chaio prove that a Galois semi-covering functor preseves the power of radical \cite[Theorems~4.3, 4.6]{GoSaTr25}.
\begin{theorem}\label{psr}
Let $F_{\lambda}: \Lambda \to \bar{\Lambda}$ be a Galois semi-covering. Then 
\begin{enumerate}
\item $F_{\lambda}$ preserve powers of radicals;
\item stable rank of $\Lambda$= stable rank of $\bar{\Lambda}$.
\end{enumerate}
\end{theorem}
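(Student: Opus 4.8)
The statement to prove is Theorem~\ref{psr}, namely that a Galois semi-covering functor $F_\lambda : \Lambda \to \bar\Lambda$ preserves powers of the radical and consequently that $\mathrm{st}(\Lambda) = \mathrm{st}(\bar\Lambda)$. The plan is to work transfinitely along the chain of radical powers, using the isomorphisms of Theorem~\ref{GCM} as the engine, exactly as in an ordinary Galois covering argument but with the three-case bookkeeping of the semi-covering. First I would fix notation: for $M, N \in \mathrm{mod}\mbox{-}\Lambda$ write any morphism $F_\lambda M \to F_\lambda N$ in the decomposed form supplied by Theorem~\ref{GCM}, i.e.\ as a (finite) sum of ``pieces'' indexed by $g \in G$ coming from $\mathrm{Hom}_\Lambda({}^gM, N)$ (or the symmetric variants). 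The key observation is that this decomposition is natural in $M$ and $N$, so composition of morphisms in $\mathrm{mod}\mbox{-}\bar\Lambda$ translates, piece by piece, into composition of the corresponding morphisms in $\mathrm{mod}\mbox{-}\Lambda$ after twisting by group elements; and since $g\colon \Lambda\to\Lambda$ is an automorphism, $f$ is radical iff ${}^gf$ is radical, and likewise for irreducibility.

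The induction then runs as follows. For the base case and finite successor steps, I would show $F_\lambda(\mathrm{rad}^n_\Lambda(M,N))$ generates $\mathrm{rad}^n_{\bar\Lambda}(F_\lambda M, F_\lambda N)$ and, conversely, that every piece of a morphism in $\mathrm{rad}^n_{\bar\Lambda}(F_\lambda M, F_\lambda N)$ lies in $\mathrm{rad}^n_\Lambda({}^gM, N)$ for the appropriate $g$. The forward inclusion is immediate because $F_\lambda$ is additive and sends non-isomorphisms between indecomposables to morphisms that are not isomorphisms (using that $F_\lambda$ is ``semi-dense'' and the indecomposable-decomposition of Remark~\ref{indsk}, so no accidental splittings arise). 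The reverse inclusion is where the semi-covering isomorphism does the real work: a length-$n$ factorization $F_\lambda M \to Z_1 \to \cdots \to Z_{n} = F_\lambda N$ through indecomposables $Z_i$ can be replaced, after absorbing the $\hat G$-twists using Remark~\ref{indsk}, by indecomposables of the form $F_\lambda(M_i)$ (or summands thereof), and then each composite is decomposed by Theorem~\ref{GCM} into pieces that, by the inductive hypothesis applied to each factor, land in the radical powers of the appropriate $\mathrm{Hom}$-spaces over $\Lambda$; summing the twisted factorizations reconstitutes a length-$n$ radical factorization over $\Lambda$. For the limit-ordinal step, $\mathrm{rad}^\beta = \bigcap_{\mu < \beta} \mathrm{rad}^\mu$ on both sides, and since $F_\lambda$ commutes with this intersection on each $\mathrm{Hom}$-space (the decomposition of Theorem~\ref{GCM} is a \emph{finite} direct sum, so intersection is computed componentwise), the claim passes to the limit; the transfinite successor step $\alpha = \mu + n$ reduces to the finite case applied to $\mathrm{rad}^\mu$. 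Part (2) is then a formal consequence: $\mathrm{rad}^\alpha_\Lambda = \mathrm{rad}^{\alpha+1}_\Lambda$ holds on every $\mathrm{Hom}_\Lambda$-space iff the corresponding equality holds on every $\mathrm{Hom}_{\bar\Lambda}$-space, because the two sides transport into one another through the natural componentwise isomorphisms, so the least such $\alpha$ is the same for both algebras.

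The main obstacle I anticipate is the reverse inclusion in the successor step, specifically controlling the \emph{intermediate} objects in a radical factorization downstairs versus upstairs. A factorization in $\mathrm{mod}\mbox{-}\bar\Lambda$ through an arbitrary indecomposable $\bar Z$ need not have $\bar Z$ in the image of $F_\lambda$ on the nose — by Remark~\ref{indsk}, $\bar Z$ is either $F_\lambda(Z)$ for a unique $Z$ with $G_Z \neq G$, or a $\hat G$-twist summand of $F_\lambda(Z)$ for $Z$ with $G_Z = G$. In the second case one must split the morphisms through $\bar Z$ across the $\hat G$-orbit and check that radicality is not lost when reassembling; this is the same subtlety that already appears in Proposition~\ref{3.7} and Remark~\ref{indsk}, and I would handle it by the same device used there (gluing/splitting along the module where the orbit representatives agree), so the bookkeeping is routine but genuinely needs the abelian-group hypothesis on $G$. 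A secondary point to be careful about is that $F_\lambda$ need not preserve indecomposability (Remark~\ref{indsk}), so one cannot argue purely at the level of the Auslander--Reiten quiver; the argument must be phrased entirely in terms of factorizations of morphisms and the $\mathrm{Hom}$-space isomorphisms of Theorem~\ref{GCM}, which is exactly why those isomorphisms were established first.
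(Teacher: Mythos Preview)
The paper does not actually prove Theorem~\ref{psr}; it is stated with the citation \cite[Theorems~4.3, 4.6]{GoSaTr25} and no argument is given in the present paper. So there is no in-paper proof to compare your proposal against.

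That said, your sketch is broadly the right shape for how such a result is established: a transfinite induction on the ordinal power of the radical, driven by the $\mathrm{Hom}$-space decompositions of Theorem~\ref{GCM} and the indecomposable bookkeeping of Remark~\ref{indsk} and Proposition~\ref{3.7}, with the limit step handled componentwise because the decomposition is a finite direct sum. The genuine content, as you correctly flag, is the reverse inclusion at successor stages: lifting a radical factorization in $\mathrm{mod}\mbox{-}\bar\Lambda$ back to $\mathrm{mod}\mbox{-}\Lambda$ when the intermediate indecomposable is only a $\hat G$-twist summand of some $F_\lambda(Z)$. Your plan to absorb these twists via Remark~\ref{indsk} and reassemble is exactly the device used in \cite{GoSaTr25}, so the approach is sound, but be aware that the full argument there is more than routine bookkeeping and occupies two separate theorems (one for finite/successor powers, one for the stable rank equality). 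If you intend to supply a self-contained proof rather than cite \cite{GoSaTr25}, you would need to make the reverse-inclusion step fully precise, in particular verifying that when $F_\lambda$ splits an indecomposable (the $G_M=G$ case) no component of a radical factorization becomes an isomorphism after projection.
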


Moreover, the Schr\"oer's conjecture is verified in \cite[Theorem~10.19]{LaPrPu18} for string algebras. As a consequence of Theorems \ref{eqkg} and \ref{psr}, we get the following result:

\begin{corollary}\label{conjs}
For a skew-gentle algebra $\bar\Lambda$, $KG(\bar{\Lambda})= n \geq 2$ iff $\operatorname{rad}^{\omega(n-1)}_{\bar{\Lambda}}=0$ and $\operatorname{rad}^{\omega n}_{\bar{\Lambda}}=0$.
\end{corollary}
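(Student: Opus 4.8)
The plan is to pull the statement back to the underlying gentle algebra, where Schr\"oer's conjecture is already known, and then push it forward along the skew group construction. By \cite[Section~3.2]{AB22}, since $\mathrm{char}\,K \neq 2$, we may write $\bar\Lambda = \Lambda\mathbb{Z}_2$ with $\Lambda$ a gentle algebra equipped with a suitable $\mathbb{Z}_2$-action; Corollary \ref{eqkgg} then gives $KG(\bar\Lambda) = KG(\Lambda)$, so the hypothesis $KG(\bar\Lambda) = n \geq 2$ is equivalent to $KG(\Lambda) = n \geq 2$.

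Next I would transfer the vanishing of radical powers across the construction. By Theorem \ref{psr}(1) the Galois semi-covering $F_\lambda : \Lambda \to \bar\Lambda$ preserves powers of the radical, and by Theorem \ref{inv} the algebra $\Lambda$ is Morita equivalent to a skew group algebra of $\bar\Lambda$ under the dual action, producing a Galois semi-covering $F^\lambda : \bar\Lambda \to \Lambda$ that likewise preserves powers of the radical. Running both directions yields, for every ordinal $\alpha$, the equivalence $\mathrm{rad}^\alpha_{\bar\Lambda} = 0 \Longleftrightarrow \mathrm{rad}^\alpha_\Lambda = 0$; only the instances $\alpha = \omega(n-1)$ and $\alpha = \omega n$ are needed here.

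Finally, a gentle algebra is a string algebra, so Schr\"oer's conjecture holds for $\Lambda$ by \cite[Theorem~10.19]{LaPrPu18}, i.e. $KG(\Lambda) = n \geq 2$ if and only if $\mathrm{rad}^{\omega(n-1)}_\Lambda \neq 0$ and $\mathrm{rad}^{\omega n}_\Lambda = 0$. Chaining this with the two equivalences from the previous steps gives precisely: $KG(\bar\Lambda) = n \geq 2$ if and only if $\mathrm{rad}^{\omega(n-1)}_{\bar\Lambda} \neq 0$ and $\mathrm{rad}^{\omega n}_{\bar\Lambda} = 0$, which is the claim.

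The step requiring genuine care is the middle one: Theorem \ref{psr}(1) must be invoked in the form of a level-by-level correspondence of radical vanishing, not merely the coincidence of stable ranks (the content of part (2)); in particular one must check that this is compatible with the transfinite definition of $\mathrm{rad}^\alpha$ at limit ordinals and with the fact that $F_\lambda$ is only semi-dense, not dense, on indecomposables. The remainder is bookkeeping — including the harmless observation that the small excluded values of $n$ match on the two sides via Corollary \ref{eqkgg} and Krause's result $KG \neq 1$.
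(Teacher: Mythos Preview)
Your proposal is correct and follows essentially the same route as the paper's one-line proof: reduce to the underlying gentle algebra $\Lambda$, invoke the preservation of $KG$ (Theorem \ref{eqkg} / Corollary \ref{eqkgg}) and of radical powers (Theorem \ref{psr}), and then apply the known case of Schr\"oer's conjecture for string algebras from \cite[Theorem~10.19]{LaPrPu18}. Your write-up is in fact more careful than the paper's, since you explicitly isolate the point that Theorem \ref{psr}(1) must yield a level-by-level equivalence $\mathrm{rad}^\alpha_{\bar\Lambda}=0 \Leftrightarrow \mathrm{rad}^\alpha_\Lambda=0$ (using both $F_\lambda$ and $F^\lambda$), rather than merely the equality of stable ranks in part (2); and you have silently corrected the evident typo in the statement, writing $\mathrm{rad}^{\omega(n-1)}_{\bar\Lambda}\neq 0$ as Schr\"oer's conjecture actually requires.
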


\subsection{KG-dimension of (skew) Brauer graph algebras}
The class of Brauer graph algebras forms the subclass of symmetric algebras of special biserial algebras as described in \cite[Theorem 2.8, Theorem 1.8]{Sc18}. For the detailed concept of Brauer graph algebras follow \cite[Section~2.3]{Sc18}.

\begin{theorem}\label{BSB}
Brauer graph algebras are symmetric special biserial.    
\end{theorem}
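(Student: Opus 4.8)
The plan is to verify the two defining properties directly from the combinatorial data. Recall that a Brauer graph algebra $\Lambda$ is presented as $KQ/I$, where the vertices of $Q$ are the edges of the underlying graph $G$, the arrows are induced by the cyclic orderings $\mathfrak{o}$ of the (half-)edges around the vertices of $G$, and $I$ is generated by: the quadratic ``detour'' relations (a composite of two arrows that changes the vertex of $G$ around which one revolves lies in $I$), the ``cycle relations'' $C_u^{\mathfrak{m}(u)}-C_v^{\mathfrak{m}(v)}$ for an edge with endpoints $u,v$, and the relations $\alpha\,C_v^{\mathfrak{m}(v)}=0=C_v^{\mathfrak{m}(v)}\,\alpha$. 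For the special biserial property I would check: (i) each edge $e$ of $G$ carries exactly two half-edges (possibly at the same vertex, if $e$ is a loop), and each half-edge determines a unique successor and a unique predecessor half-edge in the relevant cyclic order, hence one outgoing and one incoming arrow; thus at most two arrows start and at most two arrows end at every vertex $e$ of $Q$; and (ii) given an arrow $\alpha\colon e\to f$ coming from the cyclic order at a vertex $v$, the only arrow $\beta$ with $\alpha\beta\notin I$ is the successor arrow $f\to g$ at the \emph{same} vertex $v$, and symmetrically the only $\gamma$ with $\gamma\alpha\notin I$ is the predecessor arrow at $v$ ending at $e$, since any other composite changes the revolving vertex and so lies in $I$. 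This is exactly the special biserial condition.

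For symmetry I would exhibit a symmetrizing linear form. The socle of each indecomposable projective $P_e$ is one-dimensional and spanned by the class $z_e$ of the special cycle $C_v^{\mathfrak{m}(v)}$ at an endpoint $v$ of $e$, and the cycle relations make $z_e$ independent of the chosen endpoint. Fix the $K$-basis of $\Lambda$ consisting of the classes of all proper initial subwords of the special cycles together with the elements $z_e$, and define $\tau\colon\Lambda\to K$ to be $1$ on each $z_e$ and $0$ on every other basis element, extended linearly. I would then establish: (a) nondegeneracy --- reducing to a basis path $p$ starting at $e$, there is a path $q$ with $pq$ a nonzero scalar multiple of $z_e$, so the associative bilinear form $(a,b)\mapsto\tau(ab)$ has trivial left and right radical; and (b) $\tau(ab)=\tau(ba)$, for which it suffices to take $a=p,\ b=q$ basis paths: $\tau(pq)\neq 0$ forces $pq$ to spell, up to scalar, a full special cycle $C_v^{\mathfrak{m}(v)}$ at some vertex $v$ of $G$, and rotating that cyclic word shows $qp$ spells the same special cycle, so $\tau(qp)\neq 0$ with the matching scalar. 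Hence $\tau$ is an associative, symmetric, nondegenerate linear form and $\Lambda$ is symmetric; together with the first paragraph this shows $\Lambda$ is symmetric special biserial. (This is the content of \cite[Theorems~1.8 and~2.8]{Sc18}.)

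The main obstacle will be part (b): correctly matching scalars in the rotation of special cycles once the multiplicity function $\mathfrak{m}$ and, in particular, the truncated vertices (valency-one vertices of multiplicity one, where the local arrow pattern and the socle description degenerate) are taken into account. One must argue case by case that truncation neither produces a vertex of $Q$ with three arrows on one side nor destroys the one-dimensionality of the socles, and that the rotation argument still yields $\tau(pq)=\tau(qp)$ when a segment of $pq$ is a genuine power $C_v^{\mathfrak{m}(v)}$ rather than a single traversal of the cycle; the remaining verifications (basis property, well-definedness of $z_e$, associativity of $\tau$) are routine bookkeeping with string combinatorics.
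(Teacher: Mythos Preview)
Your proof sketch is correct and follows exactly the standard argument (special biserial from the half-edge combinatorics, symmetry via the trace form vanishing off the socle elements $z_e$), which is indeed the content of \cite[Theorems~1.8 and~2.8]{Sc18}. Note, however, that the paper does not supply its own proof of this statement at all: Theorem~\ref{BSB} is simply quoted from \cite{Sc18} as a known result and used as input for the later corollaries, so there is nothing in the paper to compare your argument against beyond the same citation you already give.
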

On the other hand, skew Brauer graph algebras are introduced as a generalization of Brauer graph algebras in \cite[Section~3.1]{So24} which are found to be the skew-group algebras of Brauer graph algebras equipped with a certain $\mathbb{Z}_2$-action described below.

\begin{definition}
A skew Brauer graph is the data $\Gamma = (H,\iota,\sigma,m)$ where
\begin{enumerate}
\item $H$ is the set of half-edges;
\item $\iota$ is a permutation of $H$ with possible fixed points satisfying $\iota^2= id_H$;
\item $\sigma$ is a permutation of $H$ called the orientation;
\item $m: H\to \mathbb{Z}_{>0}$ is a map that is constant on a $\sigma$-orbit : it is called the multiplicity.
\end{enumerate}
\end{definition}

One can naturally define a graph to the data $\Gamma = (H,\iota,\sigma,m)$. Denote $H_0$ the set of half-edges that are not fixed by $\iota$ and $H_\times$ the set of half-edges that are fixed by $\iota$. Hence, the vertex set of this graph is given by $\Gamma_0\approx H/\sigma \cup H_\times$ and the edge set is given by $\Gamma_1= H/\iota$. Moreover, each cycle in the decomposition of $\sigma$ gives rise to a cyclic ordering of the half-edges around a vertex in $H/\sigma$. Since $m: H\to \mathbb{Z}_{>0}$ is constant on a $\sigma$-orbit, it induces a map $m: H/\sigma \to \mathbb{Z}_{>0}$. We denote by $o$ the vertices in $H/\sigma$ and by $\times$ the vertices in $H_\times$. Clearly, each $\times$-vertex has a unique edge incident to it. We identify $\Gamma = (H,\iota,\sigma,m)$ with the newly constructed graph.
 
Given a skew Brauer graph $\Gamma = (H,\iota,\sigma,m)$, we say that a $\mathbb{Z}_2$-grading $d: H \to \mathbb{Z}_2$ is $0$-homogeneous if $\sum_{h\in H,s(h)=v} d(h)= 0$ for all $v\in H/\sigma$, where $s:H\to H/\sigma$ is the source map for $o$-vertex in $\Gamma$. Moreover, $(\Gamma,d)$ is a $\mathbb{Z}_2$-graded skew Brauer graph if $d: H \to \mathbb{Z}_2$ is $0$-homogeneous.

For a $\mathbb{Z}_2$-graded skew Brauer graph, construct a Brauer graph $\Gamma_d = (H_d,\iota_d,\sigma_d,m_d)$ as follows
\begin{enumerate}
\item $H_d = H\times\mathbb{Z}_2$: an element of $H_d$ will be denoted $h_i$ for $h\in H$ and $i\in \mathbb{Z}_2$;
\item For all $h_i\in H_d$, $\iota_d(h_i)=\begin{cases} h_{i+1} &\mbox{ if } h\in H_\times;\\(\iota h)_i &\mbox{ if } h\in H_0.\end{cases}$;
\item For all $h_i\in H_d, \sigma_d(h_i)= (\sigma h)_{i+d(h)}$;
\item For all $h_i\in H_d, m_d(h_i)= m(h)$.
\end{enumerate}

The skew Brauer graph algebra $B$ associated to $\Gamma$ can be found as a skew group algebra of the Brauer graph algebra $B_d$ associated to the covering $\Gamma_d$ under the following action of the cyclic group $G=\langle g\rangle$ of order $2$ on $B_d$. Denoting by $e_{[h_i]}$ the idempotent in $B_d$ corresponding to the edge $[h_i]\in H_d/\iota_d$, there is a natural action of $G$ on $B_d= KQ_d/I_d$ given by
\begin{enumerate}
\item $g. e_{[h_i]}= e_{[h_{i+1}]}$ for all $[h_i]\in H_d/\iota_d$;
\item $g. \alpha_{h_i}= \alpha_{h_{i+1}}$  where $\alpha_{h_i}$  is the arrow in $Q_d$ induced by the half-edge $h_i\in H_d$.
\end{enumerate}

Similarly, denoting by $e_{[h]_i}$  for $i= \phi,0,1$ the idempotent(s) in $B$ arising from the edge $[h]\in H/\iota$, there is a natural
action of the dual $\hat{G}=\langle \chi\rangle$ of $G$ on $B= KQ/I$ given by
\begin{enumerate}
\item $\chi.e_{[h]}= e_{[h]}$ for all $[h]\in H_o/\iota$ and $\chi. e_{[h]_i} = e_{[h]_{i+1}}$  for all $[h]\in H_\times/\iota$; 
\item $\chi.{}^j\alpha_h^i= (-1)^{-d(h)} {}^{j+1}\alpha_h^{i+1}$ where ${}^j\alpha_h^i$ is an arrow in $Q$ induced by the half-edge $h\in H$. By an abuse of notation, if $i=\phi$ then $i+1=\phi$.
\end{enumerate}

The following result \cite[Proposition~3.9]{So24} demonstrates that skew Brauer algebra is obtained as the skew group algebra of a Brauer algebra under the action of the group $G$ defined above.  

\begin{proposition}
Let $f$ be the idempotent in $B_d G$ given by the sum of the $e_{[h_0]} \otimes 1_G$ for all $[h] \in H/\iota$. Then, the algebra $fB_dGf$ has a natural $G$-action and the map $\phi: B \to fB_dGf$ defined by  $e_{[h]}\mapsto f[h]:= e_{[h_0]} \otimes 1_G$, $e_{[h]_i}\mapsto f_{[h]_i}:= e_{[h_0]} \otimes \frac{1_G+(-1)^ig}{2}$ for $i= 0,1$ and ${}^j\alpha_h^i\mapsto {}^j\beta_h^i$ for $i,j= \phi,0,1$ is an isomorphism of algebras which commutes with the $\hat{G}$-actions, where ${}^j\beta_h^i: f_{[h]_i}\to f_{[h]_j}$ are the arrows in $fB_d Gf$.    
\end{proposition}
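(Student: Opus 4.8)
The plan is to present both $B$ and $fB_dGf$ by quivers with relations and to check that the explicit assignment $\phi$ is exactly an isomorphism of these presentations, the $\hat G$-equivariance then being a generator-by-generator verification. First I would recall from Section~\ref{SGCM} (and \cite{GoSaTr25}, \cite{Jose83}) the general description of the basic algebra $\bar e(B_dG)\bar e$ of a skew group algebra: its quiver $Q_{d,G}$ has as vertices the pairs $([h_0],\rho)$ with $[h_0]$ running over a set of representatives of the $G$-orbits of edges of $\Gamma_d$ and $\rho\in\hat G_{[h_0]}$, and its arrows are obtained from the arrows $\alpha_{h_i}$ of $Q_d$ by decomposing through the idempotents $e_\rho$ according to the three cases of Definition~\ref{dgsc}. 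The key observation is that the idempotent $f$ of the statement is exactly this $\bar e$: for $[h]\in H_0/\iota$ the edge $[h_0]$ of $\Gamma_d$ has trivial $G$-stabiliser and $e_{[h_0]}\otimes 1_G=f_{[h]}$ is the single vertex $([h_0],\rho_0)$, while for $[h]\in H_\times/\iota$ one has $G_{[h_0]}=G$ and $e_{[h_0]}\otimes 1_G=f_{[h]_0}+f_{[h]_1}$ splits as the two vertices $([h_0],\rho_0),([h_0],\rho_1)$. Thus, on idempotents, $\phi$ realises the canonical bijection between the primitive summands of $1_B$ and those of $f$, and $fB_dGf$ is the basic algebra of $B_dG$ (in particular Morita equivalent to it).

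Next I would match the arrows and relations. The arrows ${}^j\beta_h^i\colon f_{[h]_i}\to f_{[h]_j}$ of $fB_dGf$ come from the cyclic orderings encoded by $\sigma_d$; since $\sigma_d(h_i)=(\sigma h)_{i+d(h)}$ and $e_{\rho_i}=\tfrac{1_G+(-1)^ig}{2}$, the half-edge $h$, the grading value $d(h)$, and the $\mathbb Z_2$-indices $i,j$ are precisely the data that govern which arrows ${}^j\beta_h^i$ exist and with what sign, so that setting ${}^j\alpha_h^i\mapsto{}^j\beta_h^i$ puts the arrows of $Q$ in bijection with those of $Q_{d,G}$. One then computes the image in $fB_dGf$ of the Brauer-graph relation ideal $I_d$ of $B_d$ under the skew-group recipe and checks, relation by relation, that it coincides with the defining ideal $I$ of the skew Brauer graph algebra $B$ (the zero-relations, the relations around $o$- and $\times$-vertices, and the sign-twisted relations carrying the factors $(-1)^{d(h)}$). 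Once $\phi$ is known to be a well-defined algebra homomorphism sending a complete set of primitive orthogonal idempotents and an arrow basis of $B$ bijectively onto the corresponding data of $fB_dGf$, it follows that it is an isomorphism; equivalently, one compares $\dim_K B$ with $\dim_K fB_dGf$, the latter being read off from $\dim_K B_dG=2\dim_K B_d$ via the Morita equivalence above.

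Finally, for the equivariance statements, one first notes that $fB_dGf$ inherits a natural $G$-action from $B_d$ and, by Theorem~\ref{inv}, the dual $\hat G$-action under which $(fB_dGf)\hat G$ is Morita equivalent to $B_d$; since the idempotent $f=\bar e$ is stable under this dual action, the latter does restrict to $fB_dGf$. The commutation $\phi\circ\chi_B=\chi_{fB_dGf}\circ\phi$ is then checked on the algebra generators $e_{[h]},e_{[h]_i},{}^j\alpha_h^i$ of $B$ by comparing the explicit formulas $\chi\cdot e_{[h]}=e_{[h]}$, $\chi\cdot e_{[h]_i}=e_{[h]_{i+1}}$, $\chi\cdot{}^j\alpha_h^i=(-1)^{-d(h)}\,{}^{j+1}\alpha_h^{i+1}$ with the effect of the dual $\hat G$-action on $fB_dGf$ as induced from $g\cdot e_{[h_i]}=e_{[h_{i+1}]}$, $g\cdot\alpha_{h_i}=\alpha_{h_{i+1}}$ on $B_d$; the sign $(-1)^{-d(h)}$ shows up on the $fB_dGf$ side because $e_{\rho_1}=\tfrac{1_G-g}{2}$ contributes a $-1$ whenever an arrow is passed through $f_{[h]_1}$. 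I expect the main obstacle to be exactly this sign and degree bookkeeping: one must keep careful track of how the $0$-homogeneous grading $d$ propagates through $\sigma_d$ and through the orthogonal idempotents $e_{\rho_i}$, distinguishing at every step whether $h\in H_0$ or $h\in H_\times$ and whether the relevant vertex of $Q_d$ has trivial or full $G$-stabiliser; the $0$-homogeneity of $d$ is what makes the signs around each $o$-vertex cancel, which is what allows $\phi$ both to respect the relations and to commute with the $\hat G$-actions.
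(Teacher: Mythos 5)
The paper does not prove this statement at all: it is imported verbatim as \cite[Proposition~3.9]{So24}, and the surrounding text only uses it (together with Theorem~\ref{eqkg}) to deduce Corollary~\ref{eqkgb}. So there is no internal proof to compare your argument against; the only check available is whether your outline is a viable proof of Soto's proposition.

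On its own terms your strategy is sound and is the natural one: identifying $f$ with the basic idempotent $\bar e$ of $B_dG$ (trivial stabiliser for edges coming from $H_0$, full stabiliser with the two idempotents $e_{[h_0]}\otimes\frac{1_G\pm g}{2}$ for edges coming from $H_\times$) is exactly the mechanism of Section~\ref{SGCM}, and the $\hat G$-equivariance reduces, as you say, to a generator check once one knows that $\chi$ fixes $e_{[h_0]}\otimes 1_G$ (so that $f$ is $\hat G$-stable) and swaps $f_{[h]_0}\leftrightarrow f_{[h]_1}$. Two points would need tightening before this counts as a proof rather than a plan. First, the dimension comparison cannot be ``read off from $\dim_K B_dG=2\dim_K B_d$ via the Morita equivalence'': Morita equivalence does not preserve dimension, so you must compute $\dim_K fB_dGf=\sum_{[h],[h']}\dim_K e_{[h_0]}B_d\bigl(e_{[h'_0]}+e_{[h'_1]}\bigr)$ directly from the Brauer graph data of $\Gamma_d$ and match it with $\dim_K B$. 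Second, the heart of the matter --- that the images of the arrows ${}^j\beta_h^i$ satisfy precisely the defining relations of the skew Brauer graph algebra $B$, with the signs $(-1)^{d(h)}$ produced by passing arrows through $e_{\rho_1}=\frac{1_G-g}{2}$ and by $\sigma_d(h_i)=(\sigma h)_{i+d(h)}$, and that no further relations hold --- is asserted rather than carried out; this is exactly the computation that constitutes Soto's proof, so as written your text is a correct roadmap with the decisive verification deferred.
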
 

The following result is a direct consequence of Theorem \ref{eqkg} and the above proposition.
\begin{corollary}\label{eqkgb}
Suppose $\bar{\Lambda}$ is the skew Brauer graph algebra associated with the Brauer graph algebra $\Lambda$, then their KG dimensions are equal.
\end{corollary}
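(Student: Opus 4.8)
The strategy is to read Corollary~\ref{eqkgb} off the preceding Proposition (\cite[Proposition~3.9]{So24}) together with Theorem~\ref{eqkg}. That Proposition presents the skew Brauer graph algebra $\bar\Lambda$ (there called $B$) as the corner algebra $fB_dGf$ of the ordinary skew group algebra $B_dG$, where $B_d$ is a genuine Brauer graph algebra --- the algebra $\Lambda$ of the statement --- and $G=\langle g\rangle$ is the cyclic group of order two acting on $B_d$ by the permutation of idempotents and arrows displayed in that Proposition. This action is by $K$-linear algebra automorphisms of the finite-dimensional algebra $B_d$, and $G$ is finite abelian, so Theorem~\ref{eqkg} is directly available for the pair $(B_d,\,B_dG)$.

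The plan is then three short steps. First, I would check that the idempotent $f=\sum_{[h]\in H/\iota}(e_{[h_0]}\otimes 1_G)$ is full in $B_dG$: the $e_{[h_0]}\otimes 1_G$ are pairwise orthogonal idempotents summing to $f$, so each of them lies in the two-sided ideal $(B_dG)f(B_dG)$, and conjugating by $1\otimes g$ adjoins every $e_{[h_1]}\otimes 1_G$ to this ideal as well; since the $e_{[h_i]}\otimes 1_G$ with $[h_i]\in H_d/\iota_d$ exhaust the primitive idempotents of $B_dG$, the ideal contains $1_{B_d}\otimes 1_G$, hence all of $B_dG$. Consequently $\bar\Lambda\cong fB_dGf$ is Morita equivalent to $\Lambda G=B_dG$. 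Second, the Krull--Gabriel dimension is a Morita invariant --- it is defined through the functor category $\mathcal{F}(-)$, which depends only on the module category up to equivalence (equivalently, through the lattice of pp-formulas) --- so $\mathrm{KG}(\bar\Lambda)=\mathrm{KG}(\Lambda G)$. Third, apply Theorem~\ref{eqkg} to the finite abelian group $G\cong\mathbb{Z}_2$ of $K$-linear automorphisms of the finite-dimensional algebra $\Lambda=B_d$ to conclude $\mathrm{KG}(\Lambda G)=\mathrm{KG}(\Lambda)$. Concatenating the three equalities yields $\mathrm{KG}(\bar\Lambda)=\mathrm{KG}(\Lambda)$.

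I do not expect a genuine obstacle here: all of the substance --- the construction of the Galois semi-covering functor $\phi:\mathcal{F}(\Lambda)\to\mathcal{F}(\bar\Lambda)$, its exactness and faithfulness, and the involutivity of skewness used to sandwich $\mathrm{KG}(\bar\Lambda)$ between $\mathrm{KG}(\Lambda)$ and $\mathrm{KG}((\bar\Lambda)\widehat{G})$ --- has already been carried out in Theorem~\ref{eqkg} and the results it rests on. The two points that merely require a line of care are the fullness of $f$ just discussed (so that replacing the non-basic algebra $B_dG$ by its Morita-equivalent basic form loses no KG-theoretic information) and the standing hypothesis $\mathrm{char}\,K\neq 2$, which is exactly what is needed both for the \cite{So24} realization of $\bar\Lambda$ as a $\mathbb{Z}_2$-skew group algebra and for the applicability of Theorem~\ref{inv} (via the dual group $\widehat{\mathbb{Z}_2}$) inside the proof of Theorem~\ref{eqkg}.
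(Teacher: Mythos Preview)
Your proposal is correct and follows exactly the paper's route: the paper simply states that the corollary is a direct consequence of Theorem~\ref{eqkg} together with \cite[Proposition~3.9]{So24}, and you have unpacked the one implicit step (fullness of $f$, hence Morita equivalence of $fB_dGf$ with $B_dG$, and Morita invariance of $\mathrm{KG}$) that the paper leaves to the reader.
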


The next theorem by Kuber, Srivastava, and Sinha determines all possible stable ranks for special biserial algebras \cite{SVA23}.
\begin{theorem}\label{stablesba}
For any special biserial algebra $\Lambda$ with at least one band, $\omega \leq \mathrm{st}(\Lambda) < \omega^2$.    
\end{theorem}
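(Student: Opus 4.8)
The plan is to prove the two bounds separately: $\mathrm{st}(\Lambda)\ge\omega$ by a soft representation-theoretic argument, and $\mathrm{st}(\Lambda)<\omega^{2}$ by reducing to the combinatorics of string and band modules.

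For the lower bound, a band $b$ yields infinitely many pairwise non-isomorphic indecomposable band modules supported on the powers of $b$, so $\Lambda$ is of infinite representation type. By Auslander's characterisation of representation-finite algebras through the radical filtration \cite{Au82} --- namely, $\Lambda$ is representation-finite if and only if $\mathrm{rad}^{m}_\Lambda=\mathrm{rad}^{m+1}_\Lambda$ for some finite $m$, equivalently $\mathrm{rad}^{\omega}_\Lambda=0$ --- it follows that $\mathrm{rad}^{m}_\Lambda\supsetneq\mathrm{rad}^{m+1}_\Lambda$ for every finite $m$, hence $\mathrm{st}(\Lambda)\ge\omega$.

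For the upper bound one works inside the combinatorial model of $\mathrm{mod}\mbox{-}\Lambda$: every indecomposable module is a string module, a band module, or one of the finitely many non-uniserial projective-injective modules (the latter enter only in finite radical degrees and may be set aside), and every morphism between indecomposables is a finite sum of graph maps, each encoded by a common "substring datum" of its source and target. The strategy is to show that the part of the radical filtration above $\omega$ is governed by a finite complexity invariant on graph maps. Precisely, one attaches to each graph map in $\mathrm{rad}^{\omega}_\Lambda$ a complexity $\partial\in\{0,1,\dots,N\}$, with $N=N(Q,I)$ a finite constant attached to the bound quiver (bounding how intricate the substring datum of an infinite-radical graph map can be), subject to: (i) $\partial$ is non-increasing under composition of graph maps; (ii) every morphism in $\mathrm{rad}^{\omega\cdot j}_\Lambda$ is a sum of graph maps of complexity at most $N-j+1$; and (iii) the span of the graph maps of complexity $0$ is exactly $\mathrm{rad}^{\infty}_\Lambda$. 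From (ii) and (iii), for $j\ge N+1$ every morphism of $\mathrm{rad}^{\omega\cdot j}_\Lambda$ is a sum of complexity-$0$ graph maps, so $\mathrm{rad}^{\omega\cdot j}_\Lambda=\mathrm{rad}^{\infty}_\Lambda$; hence the transfinite radical filtration stabilises by stage $\omega\cdot(N+1)$, and $\mathrm{st}(\Lambda)\le\omega\cdot(N+1)<\omega^{2}$.

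The main obstacle is proving that a complexity invariant with properties (i)--(iii) exists. One must (a) pin down the right invariant on graph maps --- fine enough that (ii) holds, yet bounded purely in terms of $(Q,I)$ --- and (b) carry out the delicate combinatorics of composing graph maps over strings and bands: deciding exactly when a composite of two infinite-radical maps vanishes, and, when it survives, in what way its substring datum is forced to degenerate. Here the special biserial hypotheses (at most two arrows into and out of each vertex, and at most one composable arrow on each side) are precisely what keeps the underlying walk combinatorics finite and manageable; making this bookkeeping rigorous is the substance of \cite{SVA23}, after which the deduction of $\mathrm{st}(\Lambda)<\omega^{2}$ is the routine ordinal arithmetic indicated above. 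One expects, consistently with Schr\"oer's conjecture recorded in Corollary~\ref{conjsp}, that in the domestic case this refines to the equality $\mathrm{st}(\Lambda)=\omega\cdot\mathrm{KG}(\Lambda)$.
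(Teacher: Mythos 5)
You should first note that the paper does not prove this statement at all: Theorem~\ref{stablesba} is quoted as a theorem of Srivastava, Sinha and Kuber \cite{SVA23}, and the paper only uses it, together with Theorems~\ref{psr} and \ref{BSB}, to deduce Theorem~\ref{srbg} for (skew) Brauer graph algebras. So there is no internal proof to compare with; the comparison is between your sketch and the cited literature. Your lower bound is fine in substance: a band yields infinitely many pairwise non-isomorphic indecomposables, so $\Lambda$ is representation-infinite, and it is standard that the transfinite radical filtration of a representation-infinite algebra does not stabilise at any finite stage (equivalently, $\mathrm{rad}^m_\Lambda=\mathrm{rad}^{m+1}_\Lambda$ for finite $m$ forces representation-finiteness), giving $\mathrm{st}(\Lambda)\geq\omega$; only the attribution deserves care (this circle of facts is in \cite{Au82} and \cite{Sch00}).

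The upper bound, however, is where the entire content of the theorem lies, and there your text is not a proof. The ``complexity invariant'' $\partial$ with properties (i)--(iii) is postulated, not constructed: property (ii), that membership in $\mathrm{rad}^{\omega j}_\Lambda$ forces complexity at most $N-j+1$, is exactly the kind of statement whose verification is the hard combinatorial core, and you concede this by deferring it to \cite{SVA23}. In effect your argument reduces to citing the same paper that the author cites, wrapped in a hypothetical scheme whose fidelity to the actual argument of \cite{SVA23} you have not checked. Two further points would need attention even in a sketch: morphisms between band modules are not captured by naive graph maps over common substrings (one needs the Crawley-Boevey/Krause-type bases, and the parameter and Jordan-block structure of bands enters), and the non-uniserial projective-injective modules of a special biserial algebra cannot simply be ``set aside,'' since compositions factoring through them must be controlled when estimating radical powers. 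So: lower bound correct, upper bound a genuine gap unless \cite{SVA23} is accepted as a black box --- which is precisely, and more transparently, what the paper does.
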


Since Brauer graph algebras are special biserial by Theorem \ref{BSB}, the following result follows from Theorems \ref{psr} and \ref{stablesba}. 

\begin{theorem}\label{srbg}
For a skew Brauer graph algebra $\bar{\Lambda}$ and its associated Brauer graph algebra $\Lambda$, we have $\omega \leq \mathrm{st}(\Lambda)=\mathrm{st}(\bar{\Lambda}) < \omega^2$.  
\end{theorem}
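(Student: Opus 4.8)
The plan is to chain together three inputs already in hand: the structural identification in \cite[Proposition~3.9]{So24} of a skew Brauer graph algebra as a skew group algebra over a $\mathbb{Z}_2$-action, the invariance of stable rank under Galois semi-coverings (Theorem \ref{psr}(2)), and the Kuber--Srivastava--Sinha description of the possible stable ranks of special biserial algebras (Theorem \ref{stablesba}).

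First I would record, following \cite[Proposition~3.9]{So24}, that $\bar\Lambda(=B)$ is Morita equivalent to the skew group algebra $B_d G$ of the Brauer graph algebra $\Lambda(=B_d)$ under the $\mathbb{Z}_2$-action recalled above --- more precisely, the corner $fB_dGf$ carries that action and is isomorphic to $B$. Since $G=\mathbb{Z}_2$ has only the trivial subgroup and the whole group, the standing Assumption of Section \ref{SGCM} is satisfied automatically: a vertex not fixed by the induced action has stabiliser $\{e\}$, hence lies in an orbit of size $2=|G|$. Therefore the pushdown $F_\lambda\colon \mathrm{mod}\mbox{-}\Lambda\to\mathrm{mod}\mbox{-}\bar\Lambda$ is a genuine Galois semi-covering functor, and Theorem \ref{psr}(2) gives $\mathrm{st}(\Lambda)=\mathrm{st}(\bar\Lambda)$ at once. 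It then remains to bound $\mathrm{st}(\Lambda)$. By Theorem \ref{BSB} the algebra $\Lambda$ is symmetric special biserial; since the underlying Brauer graph of a genuine skew Brauer graph algebra is not a Brauer tree, $\Lambda$ is of infinite representation type and hence its string combinatorics contains at least one band. Theorem \ref{stablesba} then yields $\omega\le\mathrm{st}(\Lambda)<\omega^2$, and transporting this along $\mathrm{st}(\Lambda)=\mathrm{st}(\bar\Lambda)$ finishes the argument. Equivalently, one could apply Theorem \ref{stablesba} directly to $\bar\Lambda$, which is also special biserial with a band, and use Theorem \ref{psr}(2) only to identify the two ordinals.

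The delicate point is the production of a band, i.e.\ the lower bound $\omega\le\mathrm{st}(\Lambda)$: a Brauer graph algebra is representation-finite precisely when its graph is a Brauer tree, so this bound is the real content and is to be understood under the (implicit) hypothesis that the Brauer graph is not a tree; in the degenerate tree case both algebras are representation-finite, $\mathrm{rad}^\infty$ vanishes at a finite stage by Harada--Sai, and only the equality $\mathrm{st}(\Lambda)=\mathrm{st}(\bar\Lambda)$ remains meaningful. The other point needing a line of care is that $B_dG$ is not basic, so one must pass to the Morita-equivalent corner $fB_dGf\cong B$ and observe that the radical filtration of the module category --- hence the stable rank --- is a Morita invariant, so Theorem \ref{psr}(2) transfers without change. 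Beyond this bookkeeping the proof is a direct citation chain, the only genuinely external ingredient being Theorem \ref{stablesba}.
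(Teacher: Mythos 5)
Your proposal follows essentially the same route as the paper: realize $\bar\Lambda$ as a skew group algebra of $\Lambda$ via \cite[Proposition~3.9]{So24}, transfer the stable rank by Theorem~\ref{psr}(2), and invoke Theorem~\ref{BSB} together with the Kuber--Srivastava--Sinha bound (Theorem~\ref{stablesba}). Your additional remarks --- that the ``at least one band'' hypothesis of Theorem~\ref{stablesba} must be secured (the Brauer tree case being representation-finite, where the lower bound $\omega\le\mathrm{st}$ fails) and that Morita invariance is needed to pass to the non-basic $B_dG$ --- are care the paper's one-line proof silently omits, so they strengthen rather than diverge from its argument.
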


\end{document}